\newcommand{\raisemath}[1]{\mathpalette{\raisem@th{#1}}}
\newcommand{\raisem@th}[3]{\raisebox{#1}{$#2#3$}}
\newcommand{\bsigma}{\sigma}
\newcommand{\bSigma}{\Sigma}
\newcommand{\btau}{\tau}
\newcommand{\brho}{\rho}
\newcommand{\balpha}{\alpha}
\newcommand{\gbs}[1]{\langle #1 \rangle}
\newcommand{\bgbs}[1]{\big \langle #1 \big \rangle}
\newcommand{\bbgbs}[1]{\Big \langle #1 \Big \rangle}
\newcommand{\bbbgbs}[1]{\bigg \langle #1 \bigg \rangle}
\newcommand{\lrgbs}[1]{\left \langle #1 \right\rangle}
\newcommand{\E}{\mathbb E}
\newcommand{\e}{\mathbb E}
\newcommand{\la}{\langle}
\newcommand{\ra}{\rangle}
\theoremstyle{definition}
\newtheorem{theorem}{Theorem}[section]
\newtheorem{prop}[theorem]{Proposition}
\newtheorem{lemma}[theorem]{Lemma}
\newtheorem{remark}[theorem]{Remark}
\newtheorem{notation}[theorem]{Notation}
\begin{document}

\title{On the TAP equations via the cavity approach\\
	 in the generic mixed $p$-spin models}

\author{Wei-Kuo Chen\thanks{University of Minnesota, Minneapolis, USA. Email: wkchen@umn.edu} \and  Si Tang \thanks{Lehigh University, Bethlehem, USA. Email: sit218@lehigh.edu}}

\maketitle

\begin{abstract}
		In 1977, Thouless, Anderson, and Palmer (TAP) derived a system of consistent equations in terms of the effective magnetization in order to study the free energy in the Sherrington-Kirkpatrick (SK) spin glass model. The solutions to their equations were predicted to contain vital information about the landscapes in the SK Hamiltonian and the TAP free energy and moreover have direct connections to Parisi's replica ansatz. In this work, we aim to investigate the validity of the TAP equations in the generic mixed $p$-spin model. By utilizing the ultrametricity of the overlaps, we show that the TAP equations are asymptotically satisfied by the conditional local magnetizations on the asymptotic pure states.
\end{abstract}

\section{Introduction\label{intro}}

The study of mean-field spin glasses has been one of the central objectives in Statistical Physics over the past decades. Based on the replica method, this approach has attained great achievements following Parisi's celebrated ansatz \cite{parisi1979infinite,parisi1980sequence,parisi1983order} for the famous Sherrington-Kirkpatrick (SK) model \cite{SK72} as well as its variants, see physics literature in \cite{MPV87} and recent mathematical development in \cite{Pan13,Tal111,Tal112}. In a different direction, Thouless-Anderson-Palmer \cite{TAP} proposed an approach to investigating the free energy in the SK model by diagramatically expanding the free energy with respect to an effective magnetization and arrived at a new variational expression in terms of the TAP free energy, which involves a novel correlation energy of the spin fluctuations in addition to maintaining the spirit of the Gibbs variational principle and the mean-field approximation. From the first order optimality of the TAP free energy, they deduced a system of {\it self-consistent} equations, known as the TAP equations, where the solutions were predicted to contain crucial information about the landscapes of the TAP free energy as well as the SK Hamiltonian and have strong connections with Parisi's replica ansatz, see \cite{MPV87}.

In this paper, we aim to present an elementary derivation for the TAP equations in the generic mixed $p$-spin model and show that they are asymptotically satisfied by the local magnetization associated to the asymptotic pure states. Let $\beta=(\beta_p)_{p\geq 2}$ be a real sequence with $C_\beta :=\sum_{p=2}^\infty 2^p\beta_p^2 <\infty$ and let $h>0$ be fixed. For any $N\geq 1$ and $p\geq 2,$ denote by $\mathcal{I}_{N,p}$ the collection of all index vectors $(i_1,\ldots,i_p)\in \{1,\ldots,N\}^p$ of distinct entries. Let  $ g_{i_1,\ldots,i_p}$ be i.i.d. standard Gaussian for all $(i_1,\ldots,i_p)\in \mathcal{I}_{N,p}$, $N\geq 1$, and $p\geq 2.$ The Hamiltonian of the mixed $p$-spin model with (inverse) temperatures $(\beta_p)_{p\geq 2}$ and an external field $h$ is defined as 
\begin{align}
	\label{eqn:hamiltonian}
	{H}_N (\bsigma) = \sum_{p\ge 2} \frac{\beta_p}{N^{\frac{p-1}{2}}}\sum_{(i_1,\ldots,i_p)\in \mathcal{I}_{N,p}}g_{i_1,\ldots, i_p}\sigma_{i_1}\cdots \sigma_{i_p}+h\sum_{i=1}^N\sigma_i,
\end{align}
\noindent for $\bsigma = (\sigma_1, \ldots, \sigma_N) \in \bSigma_N:=\{-1,1\}^N$.
Note that when $\beta_p=0$ for all $p\neq 2$ and $\beta_2\neq 0,$ the mixed $p$-spin model recovers the famous SK model.
Using the independence among $g_{i_1,\ldots,i_p}$, it can be computed directly that the covariance of $ H_{N}$ is essentially determined by the function $\zeta(s) := \sum_{p \ge 2} \beta^2_p s^p$ and the overlap $R(\sigma^1,\sigma^2):=N^{-1}\sum_{i=1}^N\sigma_i^1\sigma_i^2$, namely,
\[
\Bigl|\text{Cov} \big( H_{N}(\bsigma^1), H_{N}(\bsigma^2)\big)-N \zeta \bigl(R(\sigma^1,\sigma^2)\bigr)\Bigr|\leq  C_\beta,\qquad\forall \bsigma^1, \bsigma^2 \in \bSigma_N.
\]
The free energy and the Gibbs (probability) measure are defined respectively by 
\[
F_N=\frac{1}{N}\ln Z_N\ \ \mbox{and}\ \ G_{N}(\bsigma) = \frac{e^{{H}_{N}(\bsigma)}}{ Z_N},
\]
where $Z_N :=\sum_{\bsigma \in \bSigma_N} e^{{H}_{N}(\bsigma)}.$
For any measurable function $f$ on $\mathbb R^{k}$ and independent samples $\bsigma^1, \ldots, \bsigma^k\sim G_N$ (also called the ``{\it replicas}'' in the physics literature), we denote by  $\gbs{f(\bsigma^1, \ldots, \bsigma^k)}$ the average under the Gibbs measure $G_{N}$, i.e., 
\[
\gbs{f(\bsigma^1, \ldots, \bsigma^k)} = \sum_{\bsigma^1, \ldots, \bsigma^k}f(\bsigma^1, \ldots, \bsigma^k) G_{N}(\bsigma^1)\cdots G_{N}(\bsigma^k).
\]
 It is well-known that the limiting free energy $F:=\lim_{N\to\infty}F_N$ exists and can be expressed as the Parisi formula (see \cite{panchenko2014parisi,talagrand2006parisi}), a variational representation of a nonlinear functional defined on the space of all probability measures on the interval $[0,1]$, in which the optimizer, called the Parisi measure $\mu_P$, exists and is unique, see \cite{AC15}. \footnote{The Hamiltonian of the mixed $p$-spin model defined in \cite{AC15,panchenko2014parisi, talagrand2006parisi} includes all indices $(i_1,\ldots,i_p)\in \{1,\ldots,N\}^p.$ However, the same conclusions remain valid in our setting since dropping the repeated indices is of a smaller order term in the free energy.}

Throughout this paper, we shall assume that the mixed $p$-spin model is {\it generic} in the sense that the linear span of the collection of all monomials, $t^p,$  for those $p$ with $\beta_p\neq 0$ is dense in $C[0,1]$ under the supremum norm. Under this assumption, this model possesses two important properties. First of all,  as $N \to \infty$, the limiting distribution of the overlap $R(\sigma^1,\sigma^2)$ under $\E\gbs{ \cdot}$ is described by the Parisi measure, see \cite{panchenko2008differentiability}. More importantly, the Gibbs measure satisfies the ultrametricity \cite{panchenko2013parisi}, namely, for any three replicas $\sigma^1,\sigma^2,\sigma^3$, we have 
\begin{align}
	\label{eqn:ultrametricity}
	\lim_{\epsilon\downarrow 0}\limsup_{N\to\infty}\E\bgbs{\mathbbm1_{\{R(\sigma^1,\sigma^2)\geq \min(R(\sigma^1,\sigma^3),R(\sigma^2,\sigma^3))-\epsilon\}}}=1.
\end{align}
As a consequence of ultrametricity, asymptotically, it allows one to decompose the spin configuration space $\Sigma_{N}$ into clusters and ultimately they induce an infinitary tree structure on $\Sigma_{N}$ under the set containments. In particular, the clusters at the bottom of the tree are called the {\it pure states} in the physics literature \cite{MPV87} --
they are essentially disjoint balls with the same radius  $\sqrt{2(1-q_P)}$ and the overlap between any two elements within the same pure state is about $q_P$, where $q_P=q_P(\beta)$  is the largest point in the support of $\mu_P.$  In addition, it is a well-known fact that if a metric space is ultrametric,  then in any ball (open or closed), every point is a center. 
In view of this and \eqref{eqn:ultrametricity}, a  natural way to construct asymptotic pure states is via the $\sqrt{2(1-q_P+\epsilon)}$ neighborhood of $\alpha\sim G_{N}$, that is,
\begin{align}\label{purestate}
\Sigma_{N}^\alpha :=\{\sigma\in\Sigma_N:\|\sigma-\alpha\|\leq \sqrt{2(1-q_P+\epsilon)}\} =\{\sigma \in \bSigma_N: R(\sigma, \alpha) > q_P-\epsilon\},
\end{align}
where $\|x\|:=\bigl(N^{-1}\sum_{i=1}^N|x_i|^2\bigr)^{1/2}$ for any $x\in \mathbb{R}^N$ and $\epsilon$ may be chosen to depend on $N$ as long as $\epsilon=\epsilon_{N}\downarrow 0$ slowly enough.
We define the conditional Gibbs measure $G_{N}^\alpha$ on the asymptotic pure state $\bSigma_{N}^\alpha$ by
\[
G_{N}^\alpha (\sigma) :=G_{N}(\bsigma|\bsigma \in \bSigma_{ N}^\alpha)=  \frac{G_{N}(\bsigma)\mathbbm 1_{\bSigma_{N}^\alpha}(\bsigma)}{G_{N}(\bSigma_{
		N}^\alpha)},
\]
where,by slightly abusing the notation, $G_{N}(A) :=\sum_{\sigma\in A} G_{N}(\sigma)$ is the total Gibbs measure for any subset $A\subseteq \bSigma_{N}$. The corresponding Gibbs average on $\Sigma_N^\alpha$ is denoted by $\gbs{
	\cdot}^\alpha$. Set the local magnetization within the asymptotic pure state $\Sigma^\alpha_N$ as
\begin{align*}
	\gbs{\sigma}^{\alpha}:=\bigl(\gbs{\sigma_1}^\alpha,\ldots,\gbs{\sigma_N}^\alpha\bigr).
\end{align*}
For any $1\leq i\leq N,$ let $\mathcal{I}_{N,p}(i)$ be the collection of all  $(i_1,\ldots,i_p)\in \mathcal{I}_{N,p}$ with $i_r=i$ for some $1\leq r\leq p.$ For any $1\leq i\leq N,$ define the cavity field at site $i$ as
\begin{align*}
	X_{N,\beta,i}(\sigma)&=\sum_{p\ge 2} \frac{\beta_p   }{N^{\frac{p-1}{2}}}\sum_{(i_1,\ldots,i_p)\in \mathcal{I}_{N,p}(i)} g_{i_1,\ldots,i_p}\prod_{s=1:i_s\neq i}^p\sigma_{i_s},\quad \forall \sigma\in\Sigma_N.
\end{align*}
Note that $X_{N,\beta,i}(\sigma)$ depends on all except the $i$-th spin.  Our main result is stated as follows:

\begin{theorem}[TAP equations] \label{thm:main}
	Assume that the model is generic. 
	Then the conditional local magnetization $\gbs{\sigma}^{\alpha}$ satisfies 
	\begin{align}
		\begin{split}
			\label{eqn:TAP}
			&\lim_{\epsilon \downarrow 0}\limsup_{N\to\infty}\E \Bigl\la\Bigl|\gbs{\sigma_{N}}^{\alpha}-\tanh \Bigl( 
			X_{N,\beta,N}(\gbs{\sigma}^{\alpha})+h -  \zeta''\bigl(\|\gbs{\sigma}^{\alpha}\|^2\bigr)\bigl(1-\|\gbs{\sigma}^{\alpha}\|^2\bigr)\gbs{\sigma_N}^{\alpha}\Bigr)\Bigr|^2\Bigr\ra= 0,
		\end{split}
	\end{align}
	where $\epsilon \downarrow 0$ along a sequence such that $q_P-\epsilon$ is always a point of continuity for $\mu_P$, i.e., $\mu_P(\{q_P-\epsilon\})=0.$
\end{theorem}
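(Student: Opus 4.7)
My approach is the cavity method at site $N$, combined with Gaussian integration by parts and the asymptotic clustering of the pure-state Gibbs measure. First, I decompose the Hamiltonian as $\mathcal{H}_N(\sigma) = \tilde{\mathcal{H}}(\sigma_{-N}) + \sigma_N(h + X_{N,\beta,N}(\sigma))$, where $\sigma_{-N} := (\sigma_1,\ldots,\sigma_{N-1})$ and both $\tilde{\mathcal{H}}$ (the part of $\mathcal{H}_N$ with no monomial involving site $N$) and $X_{N,\beta,N}$ depend only on $\sigma_{-N}$. Summing out $\sigma_N \in \{\pm 1\}$ yields the cavity identity $\E[\sigma_N \mid \sigma_{-N}] = \tanh(h + X_{N,\beta,N}(\sigma))$. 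The pure-state indicator $\mathbbm{1}\{R(\sigma,\alpha) > q_P - \epsilon\}$ depends on $\sigma_N$ only through an $O(1/N)$ shift in the overlap; since $q_P - \epsilon$ is a continuity point of $\mu_P$, this boundary effect carries vanishing Gibbs weight, and I obtain $\la \sigma_N\ra^\alpha = \la \tanh(h + X_{N,\beta,N}(\sigma))\ra^\alpha + o(1)$ as $N\to\infty$ and then $\epsilon\downarrow 0$.

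The main step is to show that $\la \tanh(h + X_{N,\beta,N}(\sigma))\ra^\alpha$ is close to $\tanh\bigl(h + X_{N,\beta,N}(m) - \zeta''(\|m\|^2)(1-\|m\|^2)\, m_N\bigr)$, where $m := \la\sigma\ra^\alpha$. Under the genericity assumption, the Ghirlanda--Guerra identities, combined with ultrametricity \eqref{eqn:ultrametricity}, deliver a quantitative clustering property of the pure-state Gibbs measure of the form $\la \sigma_{i_1}\cdots \sigma_{i_p}\ra^\alpha \approx m_{i_1}\cdots m_{i_p}$ in an $L^2$-averaged sense. In particular, $\|m\|^2 \to q_P$ and $\la X_{N,\beta,N}(\sigma)\ra^\alpha \approx X_{N,\beta,N}(m)$. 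I then Taylor-expand $\tanh$ around $h + X_{N,\beta,N}(m)$, generating error terms involving Gibbs moments of $Y := X_{N,\beta,N}(\sigma) - X_{N,\beta,N}(m)$. Applying Gaussian integration by parts coordinatewise to the disorder variables $g_{i_1,\ldots,i_p}$ inside $X_{N,\beta,N}$ (each $g_I$ being traded, via Stein's lemma, for a Gibbs covariance with respect to $\sigma_I$) and then invoking the clustering produces precisely the shift $-\zeta''(\|m\|^2)(1-\|m\|^2)\, m_N$ inside the argument of $\tanh$. The self-consistency $\tanh(h + X_{N,\beta,N}(m)) \approx m_N$, which itself holds at leading order after Step~1, is used to absorb the correction cleanly into the argument.

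The principal technical obstacle is executing the Gaussian integration-by-parts calculation so that the exact coefficient $\zeta''(\|m\|^2)(1-\|m\|^2)$ emerges, rather than a nearby but different expression. This requires a subtle accounting of the cancellations between the terms arising from differentiating the explicit $g_I$ in $X_{N,\beta,N}$ and the terms arising from differentiating the Gibbs weights with respect to $g_I$, and it depends on the quantitative form of the pure-state clustering coming from the Ghirlanda--Guerra identities. Controlling the Taylor remainder uniformly in $\alpha$ and the disorder, and justifying the self-consistency $\tanh(h + X_{N,\beta,N}(m)) \approx m_N$ with a comparable error rate, constitute the remaining bookkeeping; I expect the uniformity of the clustering in the index $N$ (so that the singled-out site plays no special role beyond the scaling) to be the delicate point.
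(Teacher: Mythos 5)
Your Step 1 (cavity identity at site $N$, with the pure-state constraint insensitive to the last spin up to an $O(1/N)$ boundary effect at a continuity point of $\mu_P$) is sound and matches the paper's starting point. The gap is in your main step. Within a pure state the fluctuation $Y:=X_{N,\beta,N}(\sigma)-X_{N,\beta,N}(m)$ is \emph{not} small: by the overlap concentration its conditional variance tends to $\zeta'(1)-\zeta'(q_P)>0$, so a Taylor expansion of $\tanh$ around $h+X_{N,\beta,N}(m)$ cannot be truncated at any finite order, and even after resumming against the limiting Gaussian law one gets $\E_z\tanh\bigl(a+z\sqrt{\zeta'(1)-\zeta'(q_P)}\bigr)$, which is not of the form $\tanh(\cdot)$ of a shifted argument. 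The paper only obtains a $\tanh$ because it represents $\langle\sigma_N\rangle^\alpha$ as a ratio $\langle\sinh(X+h)\rangle/\langle\cosh(X+h)\rangle$ over the $(N-1)$-spin conditional measure (which is independent of the cavity disorder $g_{\cdot N}$), where the Gaussian factors $e^{(\zeta'(1)-\zeta'(q_P))/2}$ cancel between numerator and denominator; your plan of averaging $\tanh$ directly has no such cancellation. Relatedly, the coefficient you must produce is sensitive to which ``mean field'' you subtract: centering at $\langle X_{N,\beta,N}(\sigma)\rangle^\alpha$ (the reweighted, full-measure average) yields a correction with coefficient $\zeta'(1)-\zeta'(q_P)$, whereas centering at $X_{N,\beta,N}(m)$ yields $\zeta''(q_P)(1-q_P)$; distinguishing these is exactly the content of the paper's Sections 4--5, where $X_{N,\beta}(\langle\sigma\rangle^\alpha)$ is related to $\langle X_{N,\beta}(\tau)\rangle^{\rho}_{N-1,\beta'}$ through a replica representation of the product $\langle\sigma_{i_1}\rangle^\alpha\cdots\langle\sigma_{i_{p-1}}\rangle^\alpha$, a multivariate CLT for $\bigl(Z_{N,p}(\boldsymbol\tau),X_{N,\beta}(\tau^1),\dots,X_{N,\beta}(\tau^{p-1})\bigr)$, and a truncation of the $p$-sum with fourth-moment bounds. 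Your sketch acknowledges this as ``the principal technical obstacle'' but supplies no mechanism by which the correct coefficient emerges, so the proof is not actually carried out at its crucial point.

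Two further omissions would also need repair. First, the disorder $g_{\cdot N}$ appearing in $X_{N,\beta,N}$ also enters the Gibbs weights and the sampling of $\alpha\sim G_{N,\beta}$, so your coordinatewise Gaussian integration by parts generates many extra terms from differentiating the measure; the paper neutralizes this dependence by a change of measure from $G_{N,\beta}$ to $G_{N-1,\beta'}$ with a controlled Radon--Nikodym factor ($\cosh(X_{N,\beta}(\rho)+h)$, Lemma \ref{add:lem1}), which your outline never confronts. Second, the ``quantitative clustering'' $\langle\sigma_{i_1}\cdots\sigma_{i_p}\rangle^\alpha\approx m_{i_1}\cdots m_{i_p}$ is asserted rather than proved; the paper works instead with the weaker but provable overlap concentration within pure states (Lemma \ref{concentration}) together with the CLTs, and the infinite sum over $p$ requires the separate truncation and moment estimates of Propositions \ref{Coro:tailofXn}--\ref{Coro:Xn-Negligible}, none of which appear in your plan.
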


By symmetry, the expectation in \eqref{eqn:TAP} is the same for any spin site. Hence, as long as $N$ is large enough,  \eqref{eqn:TAP} implies that the local magnetization $\la \sigma\ra^\alpha$ satisfies the following asymptotic consistency equations on average over all spin sites $1\leq i\leq N,$
\begin{align}\label{TAP}
\gbs{\sigma_{i}}^{\alpha}\approx\tanh \Bigl( 
X_{N,\beta,i}(\gbs{\sigma}^{\alpha})+h -  \zeta''\bigl(\|\gbs{\sigma}^{\alpha}\|^2\bigr)\bigl(1-\|\gbs{\sigma}^{\alpha}\|^2\bigr)\gbs{\sigma_i}^{\alpha}\Bigr).
\end{align}
These self-consistent equations are the TAP equations. The term $\zeta''\bigl(\|\gbs{\sigma}^{\alpha}\|^2\bigr)\bigl(1-\|\gbs{\sigma}^{\alpha}\|^2\bigr)\gbs{\sigma_i}^{\alpha}$ is called the Onsager correction term, which distinguishes the disorder and non-disorder spin systems, where the latter, taking the Curie-Weiss model for example, does not involve an Onsager term in the mean-field equation for the spin magnetizations, see, e.g., Equation (II.2) in \cite{MPV87}.

The number of solutions to the TAP equations generally depends on the temperature. In the high temperature regime, i.e., $\zeta(1)$ is sufficiently small, it can be shown that the overlap $R(\sigma^1,\sigma^2)$ between two independent samples $\sigma^1,\sigma^2\sim G_{N}$ is concentrated around $q_P$ (see, e.g., \cite{Tal111}). Consequently, the pure state $\Sigma_N^\alpha$ associated to $\alpha\sim G_N$ is almost $\Sigma_N$ and $G_N(\Sigma_N^\alpha)$ is approximately one. As a result, we see that $\la \sigma\ra^\alpha\approx \la \sigma\ra$ and $\la \sigma\ra$ satisfies the TAP equations \eqref{TAP} in the generic mixed $p$-spin model at high temperature. This is similar to the one in the SK model established by Chatterjee \cite{Chatterjee10} and Talagrand \cite{Tal111}\footnote{Talagrand's result asserts that the $N$ TAP equations asymptotically hold simultaneously with high probability, while we establish the TAP equations in the average sense.}. 

In contrast to the high temperature case, it is expected that the TAP equations should have exponentially many solutions in $N$ in the low temperature regime. Although an argument for this remains missing in the literature, it can be argued that the TAP equations could have multiple solutions by making use of the ultrametricity \eqref{eqn:ultrametricity}. In fact, it is well-known that the Parisi measure $\mu_P$ possesses a nontrivial distribution.  Pick $q\in [0,q_P)$ in the support of $\mu_P.$ Note that in the generic mixed $p$-spin model, the {\it Ghirlanda-Guerra} identities are valid and they ensure that for any fixed $k\geq 1,$ with positive probability under $\e G_N^{\otimes k}$, there exist distinct $\alpha^1,\ldots,\alpha^k$ with $R(\alpha^\ell,\alpha^{\ell'})\approx q$ for all $1\leq \ell<\ell'\leq k$ (see, e.g., \cite[Theorem 2.20]{Pan13}). From the ultrametricity, it follows that $R(\sigma^\ell,\sigma^{\ell'})\approx q$ for any $\sigma^\ell\in \Sigma_{N}^{\alpha^\ell}$ and $\sigma^{\ell'}\in \Sigma_N^{\alpha^{\ell'}}$ for any $1\leq \ell<\ell'\leq k,$ which implies that $R(\la \sigma\ra^{\alpha^\ell},\la \sigma\ra^{\alpha^{\ell'}})\approx q$ for any $1\leq \ell<\ell'\leq k.$ As we will also see in Remark \ref{add:remark2} below that $R(\la \sigma\ra^{\alpha^\ell},\la \sigma\ra^{\alpha^\ell})$  is approximately $q_P$, we arrive at $\|\la \sigma\ra^{\alpha^\ell}-\la \sigma\ra^{\alpha^{\ell'}}\|\approx \sqrt{2(q_P-q)}>0.$ Hence,  $\la \sigma\ra^{\alpha^1},\ldots,\la \sigma\ra^{\alpha^k}$ are $k$ distinct approximate solutions to the TAP equations.

 Our approach for Theorem \ref{thm:main} is based on the cavity method, where the computation utilizes \eqref{eqn:ultrametricity}, the ultrametricity of the overlaps. This property allows us to partition the spin configuration space into random ``clusters'' $( C_{N,\alpha})_{\alpha \ge 1}$, i.e., the {pure states}, where the distribution of the weights of the pure states is characterized by the Ruelle Probability Cascades, see, e.g., \cite{Pan13,Tal112}. More importantly, within each pure state, the overlap of two spin configurations are asymptotically concentrated around $q_P$ \cite{J2017,panchenko2013parisi}. When approximating the pure states ${C}_{N,\alpha}$ via $\Sigma_N^\alpha,$ this concentration enables us to establish a central limit theorem for the cavity fields within a pure state and make our cavity computations feasible, ultimately leading to the TAP equations in \eqref{eqn:TAP}. 
Overall we believe that our approach has presented a general receipe in deriving the TAP equations for similar local magnetizations in related spin glass models.

\subsection{Related works}

{\bf The TAP Equations at High Temperature.} The mathematical establishment of the TAP equations appeared firstly in Talagrand's book \cite{Tal111} and Chatterjee \cite{Chatterjee10}, where they showed that the spin magnetization asymptotically satisfies the TAP equations in the SK model at sufficiently high temperature. Later Bolthausen \cite{Bolthausen14} proposed an iterative scheme and showed that his iteration would converge to one solution of the TAP equations, provided the temperature and external field parameters lie inside the {\it de Almeida-Thouless} phase transition line \cite{ATline}. In a joint paper, \cite{ChenTang2021}, the authors showed that Bolthausen's iteration scheme indeed converges to the spin magnetization whenever the overlap of two independently sampled spin configurations from the Gibbs measure is  concentrated locally uniformly in temperature. 
{Viewing the disorders as Brownian  motions and using tools from stochastic calculus,  Adhikari-Brennecke-von Soosten-Yau \cite{ABS21} established the TAP equations  for $\la \sigma_i \ra$ and $\la \sigma_i\sigma_j\ra - \la \sigma_i\ra \la \sigma_j\ra$ at sufficiently high temperature for the SK model and a version of the mixed $p$-spin models.}
\smallskip

{\color{black}\noindent {\bf The M\'ezard-Virasoro Equations.} Recently, Auffinger-Jagannath \cite{auffinger2019spin,AJ191} considered the generic mixed $p$-spin model and established a system of equations, discovered by M\'ezard-Virasoro \cite{mezard1985microstructure}, making a connection between the local magnetizations and the local fields. To be more precise, for each $N\geq 1,$ they constructed pure states $C_{N,a}\subseteq \Sigma_N$ for $a\geq 1$  and defined the local magnetizations as 
\begin{align}\label{AJpurestate}
\la \sigma_i\ra^a=\frac{\sum_{\sigma\in C_{N,a}}\sigma_iG_N(\sigma)}{G_N(C_{N,a})},\,\,\forall 1\leq i\leq N.
\end{align}
The M\'ezard-Virasoro equations read
\begin{align}\label{MVeq}
\la \sigma_N\ra^a\approx \tanh\Bigl(\bigl\la X_{N,\beta}(\sigma)\bigr\ra^a+h-(\zeta'(1)-\zeta'(q_P))\la \sigma_N\ra^a\Bigr).
\end{align}
Note that it is not always the case that  $\la X_{N,\beta}(\sigma)\ra^a\approx X_{N,\beta}(\la\sigma\ra^a)$. As a result, the M\'ezard-Virasoro equations is different from Theorem \ref{thm:main} in nature. See also a comparison of the cavity method used in \cite{auffinger2019spin} and in present work (Section \ref{CavityMethodComparison}).
}

\smallskip

{\noindent\bf The TAP Free Energy.} In \cite{CP18}, Chen-Panchenko established a variational principal for the free energy in terms of the classical TAP free energy and showed that the local magnetization $\la \sigma\ra^\alpha$ is asymptotically a {\it TAP state}, i.e., an optimizer of the TAP free energy.  Later in \cite{CPS18}, Chen-Panchenko-Subag  derived a general framework for the TAP free energy in the mixed $p$-spin model and from the first order optimality, it was argued that every TAP state must satisfy the generalized TAP equations. Combining the result in \cite{CP18} and the discussion of \cite[Remark 6]{CPS18} together, it is then expected that the classical TAP equations, \eqref{TAP}, should be satisfied by our local magnetization $\la \sigma\ra^\alpha$ --  Theorem \ref{thm:main} provides a justification to this prediction.

In relation to the TAP equations, the TAP free energies have also received great attention in recent years. By utilizing Parisi's ansatz, the works of Chen-Panchenko \cite{CP18} and Chen-Panchenko-Subag \cite{CPS18,CPS19} established the TAP free energy in the Ising mixed $p$-spin model as well as its generalization to the zero temperature setting. In the spherical case, Subag established the TAP free energy in the mixed $p$-spin model \cite{Subag2018} and in the multi-species model \cite{subag2021tap,subag2021tap2} by introducing the {\it multisamplable overlap} property. Independently, by means of a geometric microcanonical method, the TAP free energy involving the {\it Plefka condition} was implemented in the spherical SK model by Belius-Kistler \cite{belius2019tap} and an upper bound for the free energy in terms of the TAP free energy was also obtained in the mixed $p$-spin model with general spins by Belius \cite{belius2022high}.

\subsection{Further results related to the TAP equations}
The TAP equations have many important consequences in the study of mean-field spin glass models and related applications. First of all, based on his iteration, Bolthausen \cite{Bolthausen19} performed a conditional second moment method to derive the replica symmetry formula for the limiting free energy in the SK model at very high temperature.  In a follow-up work,  Brennecke-Yau \cite{BrenneckeYau21} provided a simplified argument for Bolthausen's approach and extended the replica symmetry formula to a larger regime. In addition, the TAP equations have played a key role in some optimization problems in spin glasses and statistical inference problems. Most importantly, they naturally give rise to the so-called Approximate Message Passing (AMP) algorithms based on Bolthausen's iteration scheme \cite{Bolthausen14}; several generalizations of the AMP algorithms can be found in Bayati-Montanari \cite{bayati2011dynamics} and Javanmard-Montanari \cite{javanmard2013state}. By using the AMP algorithms, Montanari \cite{montanari2021optimization} constructed a polynomial-time random algorithms to produce a near ground state for the SK Hamiltonian under the assumption that the Parisi measure is full replica symmetry breaking. The same construction was also carried out in the mixed $p$-spin model by El Alaoui-Montanari-Sellke \cite{el2021optimization}. In the context of Bayesian inferences, various AMPs driven by the TAP equations have also been popularly used, see, e.g., \cite{kabashima2016phase,montanari2015non,montanari2021estimation,zdeborova2016statistical}.

\subsection{Organization of the paper}

In Section \ref{proofsketch}, we provide a sketch of our proof  beginning with the settlement of some standard notations.
Section  \ref{sec:cavity1} will be devoted to establishing a cavity equation for the local magnetization $\la \sigma\ra^\alpha$ based on a univariate central limit theorem for the cavity field, see Theorem \ref{lemma4-CLT} below. In Section \ref{sec:cavitytoTAP1}, we continue to perform some cavity computations for $X_{N,\beta,N}(\la \sigma\ra^\alpha).$ In particular, noting that this quantity is an infinite series, we shall establish some quantitative bounds for its truncation and {\color{black}moments}.  Finally, the proof of Theorem \ref{thm:main} will be presented in Section \ref{sec:cavitytoTAP2} by translating the cavity equation in Section \ref{sec:cavity1} into the TAP equation. This part of the argument will rely on a multivariate central limit theorem for multiple cavity fields, a central ingredient throughout the entire paper.\\

\medskip 

{\bf \noindent Acknowledgments.} W.-K. Chen's research is partly supported by NSF grant (DMS-1752184) and the Simons fellowship (\#1027727). S. Tang's research is  partly supported by the Simons Collaboration Grant (\#712728)  and the NSF LEAPS-MPS Award (DMS-2137614).
Both authors thank A. Auffinger for explaining their work \cite{auffinger2019spin} to us and M. Sellke for pointed out a few typos. They also thank anonymous referees for carefully reading the manuscript and providing valuable comments.

\section{Proof Sketch}\label{proofsketch}

Our approach adapts a similar route as \cite[Theorem 1.7.7]{Tal111}, where the TAP equations in the SK model were established at very high temperature relying on the fact that the overlap $R(\sigma^1,\sigma^2)$ between two replicas $\sigma^1,\sigma^2\sim G_N$ is concentrated around a deterministic constant. However, this concentration is no longer valid in the low temperature regime. Our proof utilizes,  instead, a conditional concentration property of the overlap, deduced from the ultrametricity \eqref{eqn:ultrametricity}, that the overlap between two independently sampled spin configurations from the conditional Gibbs measure on an approximate pure state is  concentrated around $q_P.$ In this section, we elaborate some key steps to summarize the main ideas of our arguments. We believe that our approach is potentially applicable to establish the TAP equations in {\color{black} some other mean-field spin glass models, such as the multi-species model \cite{barra2015multi} and the perceptron model \cite[Chapter 2]{Tal111}.}  First, we  settle down our standard notations.

\subsection{Notations}\label{proofsketch:sub1}

\begin{notation}\rm Recall that $H_N$, $G_{N}$, $G_N^\alpha$, $Z_N,$ $\la \cdot\ra,$ and $\la \cdot\ra^{\alpha}$  
 are all dependent on the temperature parameter $\beta=(\beta_p)_{p\geq 2}$ and the external field $h$. For the rest of this paper, we will always keep $h$ fixed and express these quantities as $H_{N,\beta}$, $G_{N,\beta},$ $G_{N,\beta}^\alpha$, $Z_{N,\beta},$ $\la \cdot\ra_{N,\beta},$ and $\la \cdot\ra_{N,\beta}^\alpha$,  respectively, to emphasize their dependence on both $N$ and $\beta.$ 
\end{notation}

\begin{notation}
	\rm In this paper, we shall always work with  the cavity field associated to the last spin, $X_{N,\beta,N}.$ Due to the symmetry in the spin sites in  the Hamiltonian ${H}_{N,\beta}$, this cavity field can be simplified as
	$$
	X_{N,\beta,N}(\sigma)=\sum_{p\geq 2}\frac{\beta_p\sqrt{p!}}{N^{\frac{p-2}{2}}}\sum_{1\leq i_1<i_2<\cdots<i_{p-1}\leq N-1}g_{i_1,\ldots,i_{p-1},N}\sigma_{i_1}\cdots\sigma_{i_{p-1}}.
	$$ 
	 Since  it is indeed a function that does not depend on the $N$-th  spin coordinate, for notational simplicity, for either  $x\in [-1,1]^{N-1}$ or $x\in [-1,1]^N$, we will simply denote it by $X_{N,\beta}(x)$, that is,
	\begin{align}
		\begin{split}
			\label{eqn:defXn}
			X_{N,\beta}(x)
			&:=\sum_{p\geq2}\frac{\beta_p\sqrt{p!}}{N^{\frac{p-1}{2}}}\sum_{1\leq i_1<\cdots <i_{p-1}\leq N-1}g_{i_1,\ldots,i_{p-1},N}x_{i_1}\cdots x_{i_{p-1}}.
		\end{split}
	\end{align}
\end{notation}

\begin{notation} \rm For any $\alpha\sim G_{N,\beta}$, the notations $\rho$ and $\alpha_N$ stand for the first $N-1$ spins and the last spin of $\alpha$, respectively.
	When there is no ambiguity, we will generally use $\tau,\tau^1,\tau^2,\ldots \in \Sigma_{N-1}$ to denote the spin configurations that  are sampled  independently from $G_{N-1,\beta'}$ or $G_{N-1,\beta'}^\rho$ for any $\beta'=(\beta_p')_{p\geq 2}$ satisfying that $\sum_{p\geq 2}2^p{\beta_p'}^2<\infty.$
\end{notation}

\begin{notation}\rm
	For any two vectors $x,y\in \mathbb{R}^M$ for some $M\geq 1,$ we define $R(x,y)$ as the averaged inner product between $x$ and $y$ and $\|x\|$ as the averaged $\ell_2$-norm of $x$ respectively by $$R(x,y)=\frac{1}{M}\sum_{i=1}^Mx_iy_i\quad\mbox{and}\quad \|x\|=R(x,x)^{1/2}.$$ 
	For any two sequences $(a_N)_{N\geq 1}$ and $(b_N)_{N\geq 1}$ of real numbers or random variables, we say that $a_N=b_N+o_N(1)$ (or $a_N\leq b_N+o_N(1)$) if $|a_N-b_N|\leq c_N$ (or $a_N\leq b_N+c_N$) for all $N\geq 1$, where $(c_N)_{N\geq 1}$ is some deterministic positive sequence that converges to $0$ as $N\to\infty$. 
\end{notation}

\subsection{Proof sketch of Theorem \ref{thm:main}}

First of all, by making use of the notations in Subsection \ref{proofsketch:sub1}, we rewrite \eqref{eqn:TAP} as
\begin{align*}
	\lim_{\epsilon\downarrow 0}\limsup_{N\to\infty}\E\Bigl\la\Big[\gbs{\sigma_N}_{N,\beta}^\alpha-\tanh &\Big( 
	X_{N,\beta}(\gbs{\sigma}_{N,\beta}^\alpha)+h \\
	\notag
	&-  \zeta''(\|\gbs{\sigma}_{N,\beta}^\alpha\|^2)(1-\|\gbs{\sigma}_{N,\beta}^\alpha\|^2)\gbs{\sigma_N}_{N,\beta}^\alpha\Big)\Big]^2\Bigr\ra_{N,\beta}= 0.
\end{align*}
On the other hand, from the ultrametricity \eqref{eqn:ultrametricity}, it can be deduced (see Remark \ref{add:remark2}) that
\begin{align*}
	\lim_{N\to\infty}\E\bigl\la\big|\|\la \sigma\ra_{N,\beta}^\alpha\|^2-q_P\bigr|^2\bigr\ra_{N,\beta}=0.
\end{align*}
As a result, we only need to show that
\begin{align}
	\label{eqn:TAP:eq1}
	\lim_{\epsilon\downarrow 0}\limsup_{N\to\infty}\E \Bigl\la\Big[\gbs{\sigma_N}_{N,\beta}^\alpha-\tanh \Big( 
	X_{N,\beta}(\gbs{\sigma}_{N,\beta}^\alpha)+h -  \zeta''(q_P)(1-q_P)\gbs{\sigma_N}_{N,\beta}^\alpha\Big)\Big]^2\Bigr\ra_{N,\beta}= 0.
\end{align}
There are three key steps in our argument:

\medskip

{\noindent \bf Step 1: Cavity computation.} In Section 
\ref{sec:cavity1}, we decompose the Hamiltonian \eqref{eqn:hamiltonian} into two components. The first involves only the interactions among the first $N-1$ coordinates and the second gathers the interactions with the $N$-th spin. More precisely, if we fix the last spin as the cavity coordinate, then for $\sigma=(\tau,\sigma_N)\in\Sigma_{N-1}\times\Sigma_1,$ we can rewrite
\begin{align}\label{add:eq---3}
	{H}_{N,\beta}(\sigma)={H}_{N-1,\beta'}(\tau)+\sigma_N\bigl(X_{N,\beta}(\tau)+h\bigr),
\end{align}
where $\beta'=(\beta_p')_{p\geq 2}$ is a modification of $\beta$ defined as $\beta_p'=\beta_p((N-1)/N)^{(p-1)/2}$ for each $p\ge 2$. In this way, we can express $\gbs{\sigma_N}_{N,\beta}^\alpha$ as a fraction of averages with respect to the  Gibbs measure associated to the reduced Hamiltonian ${H}_{N-1,\beta'}$ in terms of two constraints, $\mathbbm{1}_{\Sigma_N^\alpha}\bigl((\tau,1)\bigr)$ and $\mathbbm{1}_{\Sigma_N^\alpha} \bigl((\tau,-1)\bigr)$. We show that when writing $\alpha=(\rho ,\alpha_N)$, both of them are approximately $\mathbbm{1}_{\Sigma_{N-1}^{\rho }}(\tau)$ due to the fact that $R(\tau, \rho) \approx R((\tau, \pm 1), \alpha)$. This will result in 
\begin{align}\label{add:eq1}
	\gbs{\sigma_{N}}_{N,\beta}^\alpha \approx \frac{ \gbs{\sinh(X_{N,\beta}(\btau)+h)}_{N-1,\beta'}^{\rho } }{\gbs{\cosh(X_{N,\beta}(\btau)+h)}_{ N-1,\beta'}^{\rho }}.
\end{align}

{\noindent \bf Step 2: Central Limit Theorem of cavity fields.}
We notice that for any $\rho \in \Sigma_{N-1}$ fixed, the disorders, $g_{i_1,\ldots, i_{p-1},N}$, in $X_{N,\beta}$ are independent of the conditional Gibbs measure $G_{N-1,\beta'}^{\rho }$ and furthermore, for independent samples $\tau^1,\tau^2\sim G_{N-1,\beta'}^{\rho }$, the overlap $R(\tau^1,\tau^2)$ is concentrated around $q_P$, due to ultrametricity \eqref{eqn:ultrametricity}. These allow us to show (see Theorem \ref{lemma4-CLT}) that the first term in the following representation
\[
X_{N,\beta}(\btau) = \bigl(X_{N,\beta}(\btau)- \gbs{X_{N,\beta}(\btau)}_{N-1,\beta'}^{\rho } \bigr) + \gbs{X_{N,\beta}(\btau)}_{N-1,\beta'}^{\rho }
\]
is Gaussian distributed with some variance, approximately $
\zeta'(1) - \zeta'(q_P)$, 
independent of $\tau$. 
Consequently, we can represent $$X_{N,\beta}(\btau) \approx z \sqrt{ \zeta'(1) - \zeta'(q_P) }  + \gbs{X_{N,\beta}(\btau)}_{N-1,\beta'}^{\rho },\quad \text{ for }z\sim N(0,1). $$
It follows that
\begin{align*}
	\gbs{\sinh(X_{N,\beta}(\btau)+h)}_{N-1,\beta'}^{\rho } &\approx \E_z \sinh (z  \sqrt{ \zeta'(1) - \zeta'(q_P) } + \gbs{X_{N,\beta}(\btau)}_{N-1,\beta'}^{\rho }+h) \\
	&= e^{( \zeta'(1) - \zeta'(q_P) )/2}\sinh ( \gbs{X_{N,\beta}(\btau)}_{N-1,\beta'}^{\rho }+h),
\end{align*}
and 
\begin{align*}
	\gbs{\cosh(X_{N,\beta}(\btau)+h)}_{N-1,\beta'}^{\rho } &\approx \E_z \cosh ( \sqrt{ \zeta'(1) - \zeta'(q_P) }  z + \gbs{X_{N,\beta}(\btau)}_{N-1,\beta'}^{\rho }+h) \\
	&\approx e^{ ( \zeta'(1) - \zeta'(q_P) )/2}\cosh ( \gbs{X_{N,\beta}(\btau)}_{N-1,\beta'}^{\rho }+h),
\end{align*}
which, combining with \eqref{add:eq1}, gives \begin{align}\label{add:eq:Step2}
\gbs{\sigma_N}_{N,\beta}^\alpha \approx \tanh( \gbs{X_{N,\beta}(\btau)}_{N-1,\beta'}^{\rho }+h).
\end{align}In establishing this Central Limit Theorem, there is a subtle issue that occurs from the fact that $\alpha \in \Sigma_N$ is not fixed (and neither is $\rho \in \Sigma_{N-1}$) but sampled from $G_{N,\beta}$. Since the Gaussian disorders $g_{i_1,\ldots, i_{p-1},N}$ appearing in $X_{N,\beta}$ are also involved in the Gibbs measure $G_{N,\beta}$, this introduces dependence between $X_{N,\beta}(\tau)$ and $G_{N,\beta}$. To handle this, we perform a change of measure for $\rho $, the first $(N-1)$ coordinates of $\alpha$ and control the associated Radon-Nykodym derivative, which allows us to prove a version of Central Limit Theorem for the Gibbs measure conditioned on the asymptotic pure states. This is done in Section \ref{sec:cavity2}, where the ultrametricity of the Gibbs measure plays a key role, ensuring that the overlap between two independently sampled spin configurations from the conditional Gibbs measure $G_{N-1,\beta'}^{\rho}$ on the asymptotic pure state $\Sigma_{N-1}^{\rho}$ is approximately concentrated around $q_P.$ This is analogous to the concentration of the overlap $R(\sigma^1,\sigma^2)$ in the SK model at high temperature.

\medskip

{\noindent \bf Step 3: Producing the Onsager correction term.} In Section \ref{sec:cavitytoTAP2}, we relate $\gbs{X_{N,\beta}(\tau)}_{N-1,\beta'}^{\rho }$ with $X_{N,\beta}(\gbs{\sigma}_{N,\beta}^{\alpha})$ and show that their difference is exactly the Onsager correction term $\zeta''(q_P)(1-q_P)\gbs{\sigma_N}_{N,\beta}^\alpha$. Due to the infinite sum over $p\ge 2$ and the product $\tau_{i_1}\cdots \tau_{i_{p-1}}$ in $X_{N,\beta}(\tau)$, handling the mixed-$p$ spin model is much more involved than the SK model. Here, we will first truncate the infinite sum over $p\ge 2$ and approximate  $\gbs{X_{N,\beta}(\btau)}_{N-1,\beta'}^{\rho }$ and $X_{N,\beta}(\gbs{\sigma}_{N,\beta}^{\alpha})$, respectively, by finite sums  of sufficiently many terms; this requires controlling the moments of both quantities. With such truncation, we are able to estimate the difference term-by-term for each $p\ge 2$ fixed. More precisely, by definition \eqref{eqn:defXn}, each term in $X_{N,\beta}(\gbs{\sigma}_{N,\beta}^{\alpha})$ takes the form 
\begin{align}
	\label{eqn:p3case}
	\frac{\beta_p \sqrt{p!} }{N^{(p-1)/2}}\sum_{1\le i_1<\cdots<i_{p-1} \le N-1} g_{i_1,\ldots,i_{p-1},N}\gbs{\sigma_{i_1}}_{N,\beta}^{\alpha}\cdots \gbs{\sigma_{i_{p-1}}}_{N,\beta}^{\alpha}.
\end{align}
We will follow an argument similar to that in our first step to write each conditioned local magnetization in  \eqref{eqn:p3case} as
\[
\gbs{\sigma_{i }}_{N,\beta}^\alpha \approx \frac{\gbs{\tau_{i}\cosh(X_{N,\beta}(\tau)+h)}_{N-1,\beta'}^{\rho }}{\gbs{\cosh(X_{N,\beta}(\tau)+h)}_{N-1,\beta'}^{\rho }},
\]
and then use replicas to represent  \eqref{eqn:p3case} as
\begin{align}\label{add:eq2}
	\frac{\bgbs{\bigl(\sum_{1\le i_1<\ldots<i_{p-1} \le N-1} \frac{\beta_p \sqrt{p!}}{N^{(p-1)/2}}g_{i_1,\ldots,i_{p-1},N}\tau_{i_1}^1\cdots\tau_{i_{p-1}}^{p-1}\bigr) \,\prod_{\ell=1}^{p-1}\cosh(X_{N,\beta}(\tau^\ell)+h) }_{N-1,\beta'}^{\rho }}{\bgbs{\prod_{\ell=1}^{p-1}\cosh(X_{N,\beta}(\tau^\ell)+h)}_{N-1.\beta'}^{\rho }}.
\end{align}
To proceed, a multivariate Central Limit Theorem (see Theorem \ref{thm:mvclt}) is needed to handle the asymptotic behavior of the jointly Gaussian-distributed random variables in \eqref{add:eq2}, namely,
\begin{align}
	\label{add:eq3}
	\sum_{1\le i_1< \cdots < i_{p-1} \le N-1} \frac{\beta_p \sqrt{p!} }{N^{(p-1)/2}}g_{i_1\cdots i_{p-1},N}\tau_{i_1}^1\cdots \tau_{i_{p-1}}^{p-1},\ \  X_{N,\beta}(\tau^1), \ \ldots, \ X_{N,\beta}(\tau^{p-1}),
\end{align}
which will allow us to handle the numerator and denominator in \eqref{add:eq2} separately as in Step 2 so that \eqref{add:eq2} is essentially  $\gbs{X_{N,\beta}(\btau)}_{N-1,\beta'}^{\rho }$ plus an Onsager correction term corresponding to the pure $p$-spin interaction. We emphasize that this step is not needed for deriving the TAP equation in the SK model at high temperature, e.g., \cite[Theorem 1.7.7]{Tal111}, because in this case, one only needs to deal with the $p=2$ case and the collection of Gaussian random variables in \eqref{add:eq3} reduces to a single one $X_{N,\beta}(\tau^1)$, consequently. A univariate central limit theorem (e.g., Theorem \ref{lemma4-CLT}) would suffice.

{\color{black}\subsection{Cavity method: A comparison}\label{CavityMethodComparison}

As mentioned before, \cite{auffinger2019spin,AJ191} established the M\'ezard-Virosoro equations. Their argument required $\mu_P(\{q_P\})>0$, which ensured the validity of the pure state decomposition, $(C_{N,a})_{a\geq 1}$, with decreasing Gibbs weights, see \cite{J2017,talagrand2010construction}. Based on this, they defined the local magnetizations $\la \sigma\ra^a$ through \eqref{AJpurestate} and performed a cavity argument for this quantity as follows. In view of \eqref{add:eq---3}, denoting by $G_N'$ the Gibbs measure associated to  $({H}_{N-1,\beta'}(\tau))_{\tau\in \Sigma_{N-1}}$ and by $(C_{N-1,a}')_{a\geq 1}$ the pure state decomposition according to $G_N'$, they decomposed their pure states as 
\begin{align}
    \label{add:eq---6}
    C_{N,a}\approx C_{N-1,\pi(a)}'\times\{-1,1\}
\end{align} under the Gibbs measure $G_N$. The function $\pi$ is a random permutation of the natural numbers $\mathbb{N}$, depending on all disorders in ${H}_N$, such that the sequence $(C'_{N-1,\pi(a)}\times\{-1,1\})_{a\geq 1}$ would also have decreasing measures under $G_N$. Consequently, they wrote
\begin{align}\label{add:eq---4}
\la \sigma_N\ra^a\approx\int \sigma_N\nu_{N,a},
\end{align}
where $\nu_{N,a}$ was the probability measure defined as
$$
\nu_{N,a}(\tau,\sigma_N)\propto e^{\sigma_NX_{N,\beta}(\tau)}G_{N-1}'(\tau|C_{N-1,\pi(a)}')$$
for $ (\tau,\sigma_N)\in \Sigma_N=\{-1,1\}^{N-1}\times\{-1,1\}$. 

Although the arguments for \eqref{add:eq---4} and Equation \eqref{add:eq1} in our Step 1 are both based on the cavity method, their derivations are fundamentally different. In \cite{auffinger2019spin}, the key step to justify \eqref{add:eq---4} relied on establishing \eqref{add:eq---6} under the Gibbs measure $G_N$, which utilized various properties of the pure state decomposition, including the fact that the Gibbs weights of the pure states asymptotically form a Poisson-Dirichlet process and the assumption $\mu_P(\{q_P\})>0$. In contrast, our constructions of the pure states and the local magnetizations are explicit so that it is easier to quantify the error estimates in the cavity computation; our Step 1 neither needs the pure state decomposition nor the assumption $\mu_P(\{q_P\})>0.$

We then proceed to Step 2 and conclude it with Equation \eqref{add:eq:Step2}. Note that Equation \eqref{add:eq:Step2} already deviates from the M\'ezard-Virasoro equations \eqref{MVeq}, where we average inside the hyperbolic tangent function with respect to the $(N-1)$-dimensional Gibbs measure, $\la X_{N,\beta}(\tau)\ra_{N-1,\beta'}^\rho$  instead of an $N$-dimensional Gibbs measure, $\la X_{N,\beta}(\sigma)\ra^a$ in \eqref{MVeq}. This is crucial, as it allows us to further express $\la X_{N,\beta}(\tau)\ra_{N-1, \beta'}^\rho$ as $ X_{N,\beta}(\la\sigma\ra^\alpha)$ plus the desired Onsager correction term in Step 3. 
Finally, we emphasize that in \cite{auffinger2019spin}, it is unclear how to rewrite the term $\la X_{N,\beta}(\sigma)\ra^a$ in the M\'ezard-Virasoro equations \eqref{MVeq} to get to the TAP equations, but we expect that it should be possible to show that the local magnetizations in \cite{auffinger2019spin} satisfy the same TAP equations \eqref{eqn:TAP} following our Steps 2 and 3.}

\section{\label{sec:cavity1}The Cavity equation for $\la \sigma_N\ra_{N,\beta}^\alpha$}

We derive the cavity equation for  $\gbs{\bsigma_N}_{ N, \beta}^\alpha$ in this section. As sketched in the previous section, for any $\sigma=(\tau,\sigma_N)\in \{\pm 1\}^{N-1}\times \{\pm 1\},$ {\color{black}by symmetry, we can decompose the Hamiltonian in \eqref{eqn:hamiltonian} as}
\begin{align*}
	 H_{N, \beta} (\bsigma) = &\sum_{p\ge 2} \frac{\beta_p\sqrt{p!}}{N^{\frac{p-1}{2}}}\sum_{1\le i_1 <\cdots < i_p \le N-1}g_{i_1,\ldots, i_p}\sigma_{i_1}\cdots \sigma_{i_p} + h \sum_{i=1}^{N-1} \sigma_i \\
	&+ \sigma_N \Bigl(\sum_{p\ge 2} \frac{\beta_p \sqrt{p!}  }{N^{\frac{p-1}{2}}}\sum_{1\le i_1 <\cdots< i_{p-1} \le N-1} g_{i_1,\ldots, i_{p-1},N}\sigma_{i_1}\cdots \sigma_{i_{p-1}} + h\Bigr)\\
	&= H_{N-1, \beta'}(\btau) + \sigma_N \big(X_{N, \beta}(\btau)+h\big),
\end{align*}
where $\beta' := (\beta_p')_{p\ge 2}$ with $\beta'_p = \beta_p ((N-1)/N)^{(p-1)/2}$ for each $p\ge 2$ and $X_{N,\beta}$ is defined by \eqref{eqn:defXn}. For any $\tau \in \Sigma_{N-1}$ fixed, $X_{N,\beta}(\tau)$ is a centered Gaussian random variable with variance 
\begin{align}
	\nonumber		\E X_{N,\beta}^2(\tau) &= \sum_{p\ge 2} \frac{\beta_p^2 p!}{N^{p-1}}\sum_{1\le i_1< \cdots <i_{p-1}\le N-1}(\tau_{i_1}\cdots \tau_{i_{p-1}})^2\\
 \nonumber&=\sum_{p\ge 2} \frac{\beta_p^2 p!}{N^{p-1}}\frac{(N-1)(N-2)\cdots (N-p+1)}{(p-1)!}\\
	\label{add:eq4}		&=\sum_{p\ge 2}\beta_p^2 p \prod_{l=1}^{p-1}\Bigl(1-\frac{l}{N}\Bigr)\nearrow\sum_{p\ge 2}\beta_p^2 p,\ \text{ as } N \to\infty.
\end{align}
We remark that the variance of $X_{N,\beta}(\tau)$ is bounded uniformly for any $N\ge 1$ and any $\tau \in \Sigma_{N-1}$  by $C_{\beta}$ defined at the beginning. The goal of this section is to establish the following theorem. 

\begin{theorem}[Cavity equation] \label{thm:cavity}Let $\alpha \sim G_{N,\beta}$ and write $\alpha = (\rho, \alpha_{N})$. Then, 
	\begin{align*}
		\lim_{\epsilon \downarrow 0}\limsup_{N\to\infty}\E \Bigl\la\Big(\gbs{\sigma_N}_{N, \beta}^{\alpha}-\tanh \big( X_{N,\beta}(\gbs{\tau}_{N-1, \beta'}^{\rho})+h\big)  \Big)^2\Bigr\ra_{N,\beta} = 0,
	\end{align*}
	where $\epsilon \downarrow 0$ along a sequence such that $q_P-\epsilon$ is always a point of continuity for $\mu_P$.
\end{theorem}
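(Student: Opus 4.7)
The plan is to execute the cavity strategy of Step 1 of the proof sketch, reducing $\gbs{\sigma_N}_{N,\beta}^\alpha$ to a ratio of Gibbs averages under the reduced measure $G_{N-1,\beta'}^\rho$, and then collapse that ratio via the univariate central limit theorem (Theorem \ref{lemma4-CLT}) to the desired $\tanh$ expression.

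Using the decomposition $\mathcal{H}_{N,\beta}(\sigma) = \mathcal{H}_{N-1,\beta'}(\tau) + \sigma_N(X_{N,\beta}(\tau)+h)$ for $\sigma = (\tau,\sigma_N)$, and splitting both numerator and denominator of $\gbs{\sigma_N}_{N,\beta}^\alpha$ according to $\sigma_N\in\{\pm 1\}$, I would write
\begin{align*}
  \gbs{\sigma_N}_{N,\beta}^\alpha \;=\; \frac{\sum_\tau e^{\mathcal H_{N-1,\beta'}(\tau)}\bigl[\mathbbm{1}_{\Sigma_{N,\beta}^\alpha}((\tau,1))\,e^{X_{N,\beta}(\tau)+h} - \mathbbm{1}_{\Sigma_{N,\beta}^\alpha}((\tau,-1))\,e^{-(X_{N,\beta}(\tau)+h)}\bigr]}{\sum_\tau e^{\mathcal H_{N-1,\beta'}(\tau)}\bigl[\mathbbm{1}_{\Sigma_{N,\beta}^\alpha}((\tau,1))\,e^{X_{N,\beta}(\tau)+h} + \mathbbm{1}_{\Sigma_{N,\beta}^\alpha}((\tau,-1))\,e^{-(X_{N,\beta}(\tau)+h)}\bigr]}.
\end{align*}
Since $R((\tau,\pm 1),\alpha) = \frac{N-1}{N}R(\tau,\rho)\pm \frac{\alpha_N}{N}$, both constraints $\mathbbm{1}_{\Sigma_{N,\beta}^\alpha}((\tau,\pm 1))$ agree with $\mathbbm{1}_{\Sigma_{N-1,\beta'}^\rho}(\tau)$ outside an $O(1/N)$-shell about $\{R(\tau,\rho)=q_P-\epsilon\}$. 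Taking $\epsilon\downarrow 0$ along continuity points of $\mu_P$ makes the $\E\la\cdot\ra$-mass of that shell vanish in the prescribed order of limits, yielding
\[
  \gbs{\sigma_N}_{N,\beta}^\alpha \;=\; \frac{\gbs{\sinh(X_{N,\beta}(\tau)+h)}_{N-1,\beta'}^\rho}{\gbs{\cosh(X_{N,\beta}(\tau)+h)}_{N-1,\beta'}^\rho} + o_N(1)
\]
in $L^2(\E\la\cdot\ra_{N,\beta})$.

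Next, I would invoke Theorem \ref{lemma4-CLT}, whose content (as previewed in Step 2 of the sketch) is that, under $G_{N-1,\beta'}^\rho$, the centered field $X_{N,\beta}(\tau)-X_{N,\beta}(\gbs{\tau}_{N-1,\beta'}^\rho)$ is asymptotically a centered Gaussian of variance $\zeta'(1)-\zeta'(q_P)$; heuristically this arises from $\E X_{N,\beta}(\tau)^2 \approx \zeta'(1)$, $\E X_{N,\beta}(\gbs{\tau}^\rho)^2\approx \zeta'(q_P)$ and $\E X_{N,\beta}(\tau)X_{N,\beta}(\gbs{\tau}^\rho)\approx \zeta'(R(\tau,\gbs{\tau}^\rho)) \approx \zeta'(q_P)$, using the within-pure-state overlap concentration. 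Combined with the Gaussian identities
\[
  \E_z\sinh(z\sigma+b)=e^{\sigma^2/2}\sinh(b),\qquad \E_z\cosh(z\sigma+b)=e^{\sigma^2/2}\cosh(b),
\]
this asserts a common prefactor $e^{(\zeta'(1)-\zeta'(q_P))/2}$ in both the $\sinh$-numerator and the $\cosh$-denominator; the prefactor cancels in the ratio and leaves exactly $\tanh\bigl(X_{N,\beta}(\gbs{\tau}_{N-1,\beta'}^\rho)+h\bigr)$, which is the claim of Theorem \ref{thm:cavity}.

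The hard step is the CLT application itself. The difficulty is that $\rho$, the first $N-1$ coordinates of $\alpha\sim G_{N,\beta}$, is coupled to $X_{N,\beta}$ through the disorder $g_{i_1,\ldots,i_{p-1},N}$, which appears both in $X_{N,\beta}$ and in the Gibbs weight of $\alpha$. To disentangle this coupling, a change-of-measure argument comparing the marginal law of $\rho$ to the pure $(N-1)$-spin measure $G_{N-1,\beta'}$ is required, together with control on the Radon-Nikodym derivative, and the Gaussianization is then driven by the overlap concentration $R(\tau^1,\tau^2)\approx q_P$ inside the approximate pure state, itself a consequence of ultrametricity \eqref{eqn:ultrametricity}. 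A secondary complication is that $X_{N,\beta}$ is an infinite series in $p\ge 2$, so a truncation argument controlled by the uniform variance bound \eqref{add:eq4} is needed to justify passing the CLT and the Gaussian integrals through the series term by term.
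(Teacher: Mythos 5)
Your overall route matches the paper's: remove the spin-constraint indicators to obtain the $\sinh/\cosh$ ratio under $\gbs{\cdot}_{N-1,\beta'}^{\rho}$ (the paper's Proposition \ref{thm1}), then Gaussianize the cavity field inside the pure state and cancel the common factor $e^{(\zeta'(1)-\zeta'(q_P))/2}$. But there is a genuine gap in how you invoke the CLT: Theorem \ref{lemma4-CLT} concerns the field centered at its \emph{conditional Gibbs mean}, $\dot X_{N,\beta}^{\rho}(\tau)=X_{N,\beta}(\tau)-\bgbs{X_{N,\beta}(\tau)}_{N-1,\beta'}^{\rho}$, not at $X_{N,\beta}(\gbs{\tau}_{N-1,\beta'}^{\rho})$ as you state. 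These two centerings coincide only when the cavity field is linear in the spins (the SK case); for $p\ge 3$ the field is a multilinear polynomial of degree $p-1$, and $\bgbs{\tau_{i_1}\cdots\tau_{i_{p-1}}}_{N-1,\beta'}^{\rho}\neq \gbs{\tau_{i_1}}_{N-1,\beta'}^{\rho}\cdots\gbs{\tau_{i_{p-1}}}_{N-1,\beta'}^{\rho}$ in general. Consequently, what the CLT plus the Gaussian $\sinh/\cosh$ identities actually delivers is $\gbs{\sigma_N}_{N,\beta}^{\alpha}\approx\tanh\bigl(\bgbs{X_{N,\beta}(\tau)}_{N-1,\beta'}^{\rho}+h\bigr)$ (the paper's Proposition \ref{thm3}), and a separate, nontrivial step is still required to replace $\bgbs{X_{N,\beta}(\tau)}_{N-1,\beta'}^{\rho}$ by $X_{N,\beta}(\gbs{\tau}_{N-1,\beta'}^{\rho})$, namely the estimate \eqref{eqn:last-step}. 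The paper proves it by a change of measure in $g_{\cdot N}$ and the explicit second-moment identity $\E_{g_{\cdot N}}\bigl[X_{N,\beta}(\gbs{\tau}_{N-1,\beta'}^{\rho})-\bgbs{X_{N,\beta}(\tau)}_{N-1,\beta'}^{\rho}\bigr]^{2}=\zeta'\bigl(\bgbs{R_{1,2}}_{N-1,\beta'}^{\rho}\bigr)+\bgbs{\zeta'(R_{1,2})}_{N-1,\beta'}^{\rho}-2\sum_{p\ge2}\beta_p^2 p\,\bgbs{R_{1,p}\cdots R_{p-1,p}}_{N-1,\beta'}^{\rho}+o_N(1)$, which vanishes by the within-pure-state overlap concentration (Lemma \ref{concentration}). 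Your heuristic covariance computation gestures at the right ingredients, but you never identify this bridging step, and you cannot simply apply the stated CLT with your alternative centering: its interpolation proof exploits the exact Gibbs-centering so that the Gaussian integration-by-parts terms reduce to Gibbs-averaged overlaps.

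A smaller but real omission in your first step: knowing that the symmetric difference $A_{\oplus}^{\rho}\setminus A_{\ominus}^{\rho}$ carries vanishing Gibbs mass does not by itself justify replacing indicators inside a \emph{ratio}; one must also keep the denominator bounded away from zero, which the paper does by restricting to the event $\{W_{N-1,\beta'}(A_{\ominus}^{\rho})\ge\delta\}$ and using \eqref{add:lem2:eq2} together with Lemma \ref{lem:sandwich}. (Also, for this theorem no truncation of the series in $p$ is needed — the uniform variance bound suffices; the truncation machinery only enters later for $X_{N,\beta}(\gbs{\sigma}_{N,\beta}^{\alpha})$.)
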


In other words, $\la \sigma_N\ra_{N,\beta}^\alpha$ is asymptotically a function of the vector $\la \tau\ra_{N-1,\beta'}^{\rho}$, the average with respect to the conditional Gibbs measure of an $(N-1)$ system. Here, the dimension of the system in $\la \cdot\ra_{N-1,\beta'}^{\rho}$ is less than that of $\la \cdot\ra_{N,\beta}^{\alpha}$ by 1. Furthermore, the last spin $\alpha_N$ is absent in the constrained Gibbs measure $\la \cdot\ra_{N-1,\beta'}^{\rho}$. We establish this theorem in three subsections.

\subsection{Removal of the cavity spin constraint I}

The conditional Gibbs average $\gbs{\bsigma_N}_{N, \beta}^{\alpha}$ can be written as 
\begin{align}
	\notag
	\gbs{\sigma_N}_{N, \beta}^{\alpha} &= \frac{\sum_{\bsigma \in \bSigma_{N}^{\alpha}} \sigma_N G_{N,\beta}(\bsigma)}{\sum_{\bsigma \in \bSigma_{N}^{\alpha}} G_{N,\beta}(\bsigma)}\\
	\label{eqn:cavity1}
	&=\frac{\bgbs{e^{X_{N,\beta}(\btau)+h}\mathbbm 1_{\bSigma_{N}^{\alpha}}\big((\btau, 1)\big) - e^{-X_{N,\beta}(\btau)-h}\mathbbm 1_{\bSigma_{N}^{ \alpha}}\big((\btau, -1)\big)\,}_{N-1, \beta'}}{\bgbs{e^{X_{N,\beta}(\btau)+h}\mathbbm 1_{\bSigma_{N}^{\alpha}}\big((\btau, 1)\big) + e^{-X_{N,\beta}(\btau)-h}\mathbbm 1_{\bSigma_{N}^{\balpha}}\big((\btau, -1)\big)\,}_{N-1, \beta'}},
\end{align}
and, for $\alpha=(\rho,\alpha_N)$, we have
\begin{align*}
	\{\btau\in \bSigma_{N-1}: (\btau, \pm1)\in \bSigma_{N}^{\balpha}\} =\Big\{ \btau \in \bSigma_{N-1}: R(\btau, \rho) \ge q_P - \epsilon + \frac{1}{N-1}(q_p - \epsilon \mp\alpha_N)\Big\},
\end{align*}
Thus, by letting
\begin{align*}
	A_{\ominus}^{\rho} &:=\Bigl\{\tau \in \bSigma_{N-1}: R(\rho, \tau) \ge q_P - \Bigl(\epsilon -\frac{3}{N-1} \Bigr)\Bigr\}, \\
	A_{\oplus}^{\rho} &:=\Bigl\{\tau \in \bSigma_{N-1}: R(\brho, \tau) \ge q_P - \Bigl(\epsilon +\frac{3}{N-1} \Bigr)\Bigr\}, 
\end{align*}
we can easily see that for all $\tau\in\Sigma_{N-1}$ and $\alpha=(\rho,\alpha_N)\in\Sigma_{N-1}\times\Sigma_1,$
\begin{align}
	\begin{split}		\label{eqn:nestedsets}
		&\mathbbm{1}_{A_{\ominus}^{\rho} } (\tau)\leq \mathbbm{1}_{\bSigma_{N-1}^{\rho}}(\tau)\leq \mathbbm{1}_{A_{\oplus}^{\rho}}(\tau),\\
		&	\mathbbm{1}_{A_{\ominus}^{\rho}}(\tau)\leq \mathbbm{1}_{\Sigma_N^\alpha}((\tau,-1)),\mathbbm{1}_{\Sigma_N^\alpha}((\tau,1))\leq \mathbbm{1}_{A_\oplus^{\rho}}(\tau).
	\end{split}
\end{align}
We will approximate the numerator and denominator of \eqref{eqn:cavity1} by substituting all indicators there by $\mathbbm 1_{\Sigma_{N-1}^{\rho}}(\tau)$. To this end, we first prepare three lemmas. \\

Denote by $g_{\cdot N}$ the collection of all Gaussian disorders that appear in $X_{N,\beta}$,
\[
g_{i_1, \ldots, i_{p-1}, N},\ \  \text{for }p=2,3,\ldots \text{and} \ 1\le i_1 < \cdots < i_{p-1} \le  N-1
\]
and by $\E_{g_{\cdot N}}$ the expectation (only) with respect to $g_{\cdot N}.$

\begin{lemma}
	\label{add:lem1} Let $\alpha = (\rho, \alpha_{N})$ be sampled according to $G_{N,\beta}$ and 
	$(\Gamma(\rho))_{\rho\in \Sigma_{N-1}}$ be a collection of nonnegative random variables, which are independent of the Gaussian disorders appearing in $X_{N,\beta}.$ Then we have
	\begin{align*}
		\E\bigl\la \Gamma(\rho)\bigr\ra_{N,\beta}&=\e\Bigl\la \Gamma(\tau)\frac{\cosh\left( X_{N,\beta}(\tau)+h\right)}{\gbs{\cosh \left(X_{N,\beta}(\tau)+h\right)}_{N-1, \beta'}}\Bigr\ra_{\!N-1,\beta'}\leq e^{2^{-1}\sum_{p\ge 2}\beta_p^2 p}\cosh(h)\E\bigl\la \Gamma(\tau)\bigr\ra_{N-1,\beta'}.
	\end{align*}
\end{lemma}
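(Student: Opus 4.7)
The plan is to reduce the $N$-dimensional Gibbs average to an $(N-1)$-dimensional one by summing out the cavity spin and then exploit the stated independence of $\Gamma$ from $g_{\cdot N}$ via a standard Gaussian moment computation. Using the Hamiltonian decomposition $\mathcal{H}_{N,\beta}(\sigma)=\mathcal{H}_{N-1,\beta'}(\tau)+\sigma_N(X_{N,\beta}(\tau)+h)$ and summing over $\sigma_N\in\{\pm 1\}$ in both numerator and denominator (exactly as in the derivation of \eqref{eqn:cavity1}), one obtains
\begin{align*}
\la \Gamma(\rho)\ra_{N,\beta}=\frac{\la\Gamma(\tau)\cosh(X_{N,\beta}(\tau)+h)\ra_{N-1,\beta'}}{\la\cosh(X_{N,\beta}(\tau)+h)\ra_{N-1,\beta'}}.
\end{align*}
Since $\cosh\geq 1$, the denominator is at least $1$ pointwise in the disorder, so one can simply drop it to get the crude upper bound $\la\Gamma(\rho)\ra_{N,\beta}\leq \la\Gamma(\tau)\cosh(X_{N,\beta}(\tau)+h)\ra_{N-1,\beta'}$.

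Next I would take $\E$ and condition on all disorders other than $g_{\cdot N}$. By hypothesis $\Gamma$ is independent of $g_{\cdot N}$, and the reduced Hamiltonian $\mathcal{H}_{N-1,\beta'}$ (hence the Gibbs measure $G_{N-1,\beta'}$) depends only on disorders whose indices all lie in $\{1,\ldots,N-1\}$, so it too is independent of $g_{\cdot N}$. Therefore the conditional expectation $\E_{g_{\cdot N}}$ commutes with the finite sum defining $\la\cdot\ra_{N-1,\beta'}$, producing
\begin{align*}
\E\la\Gamma(\tau)\cosh(X_{N,\beta}(\tau)+h)\ra_{N-1,\beta'}=\E\la\Gamma(\tau)\,\E_{g_{\cdot N}}\bigl[\cosh(X_{N,\beta}(\tau)+h)\bigr]\ra_{N-1,\beta'}.
\end{align*}
Since $X_{N,\beta}(\tau)$ is centered Gaussian with variance bounded above by $\sum_{p\geq 2}\beta_p^2 p$ uniformly in $\tau$ by \eqref{add:eq4}, the usual moment-generating function identity yields $\E_{g_{\cdot N}}\cosh(X_{N,\beta}(\tau)+h)\leq e^{2^{-1}\sum_{p\geq 2}\beta_p^2 p}\cosh(h)$, and this deterministic factor pulls out of the remaining Gibbs average to give the stated inequality.

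There is no genuine obstacle here; the only items requiring care are (i) the independence structure that lets $\E_{g_{\cdot N}}$ pass through $\la\cdot\ra_{N-1,\beta'}$, and (ii) the cancellation of the factor of $2$ coming from $\sum_{\sigma_N=\pm 1}$ between the numerator and denominator in the first display.
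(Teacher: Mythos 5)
Your argument is correct and is essentially the paper's own proof: summing out the cavity spin gives the same ratio identity that the paper phrases as a Radon--Nikodym derivative, and dropping the denominator via $\la\cosh(X_{N,\beta}(\tau)+h)\ra_{N-1,\beta'}\ge 1$ is exactly the paper's bound on that derivative, after which both proofs conclude with the independence of $g_{\cdot N}$ from the $(N-1)$-system and the Gaussian moment bound $\E_{g_{\cdot N}}\cosh(X_{N,\beta}(\tau)+h)\le e^{2^{-1}\sum_{p\ge 2}\beta_p^2 p}\cosh(h)$. No gaps.
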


\begin{proof} Write
	\begin{align*}
		\E \bgbs{\Gamma(\rho)}_{N,\beta}&=\E \sum_{\tau \in \bSigma_{N-1}}\Gamma(\tau)\big[G_{N,\beta}((\tau,1)) + G_{N,\beta}((\tau,-1))\big] \\
		\notag &=\E \sum_{ \tau \in \bSigma_{N-1}}\Gamma(\tau) G_{N-1,\beta'}(\tau)\frac{G_{N,\beta}((\tau,1)) + G_{N,\beta}((\tau,-1))}{G_{N-1, \beta'}(\tau)},
	\end{align*}
	where the last fraction is the Radon-Nykodym derivative of the two measures $G_{N,\beta}((\cdot, 1))+G_{N,\beta}((\cdot, -1))$ and $G_{N-1,\beta'}(\cdot)$ on $\Sigma_{N-1}$. Computing the numerator and the denominator at $\tau$, we get
	\begin{align*}
		G_{N,\beta}((\tau,1)) + G_{N,\beta}((\tau,-1)) &= \frac{e^{ H_{N-1, \beta'}(\tau)}\cosh \big(X_{N,\beta}(\tau)+h\big)}{\sum_{\tau' \in \bSigma_{N-1}}e^{ H_{N-1, \beta'}(\tau')}\cosh \big(X_{N,\beta}(\tau')+h\big)},\\
		G_{N-1, \beta'}(\tau)& = \frac{e^{ H_{N-1, \beta'}(\tau)}}{\sum_{\tau' \in \bSigma_{N-1}}e^{ H_{N-1, \beta'}(\tau')}}.
	\end{align*}
Plugging these into the first display establishes the equality in our assertion.
	Finally, since $\cosh\geq 1,$ we can drop the term $\la \cosh(X_{N,\beta}(
 \tau)+h)\ra_{N-1,\beta'}$ and use \eqref{add:eq4} to get 
	\begin{align*}
		\E \bgbs{\Gamma(\rho)}_{N,\beta}&\leq \E\bigl\la \Gamma(\tau)\cosh\left( X_{N,\beta}(\tau)+h\right)\bigr\ra_{N-1,\beta'}\\
		&=\E\bigl\la \Gamma(\tau)\E_{g_{\cdot N}}\bigl[\cosh\left( X_{N,\beta}(\tau)+h\right)\bigr]\bigr\ra_{N-1,\beta'}\\
		&\leq e^{2^{-1}\sum_{p\ge 2}\beta_p^2 p}\cosh(h)\E\bigl\la \Gamma(\tau)\bigr\ra_{N-1,\beta'}.
	\end{align*}
\end{proof}

\begin{lemma}\label{add:lem2} For any $\epsilon$ such that $q_P-\epsilon$ is a point of continuity of $\mu_P,$ we have that
	\begin{align}\label{add:lem2:eq1}
		\limsup_{N\to\infty}\E\bgbs{\mathbbm 1_{\{\left|R(\btau^1, \tau^2)-(q_P-\epsilon)\right| < \frac{3}{N-1}\}}}_{N-1, \beta'}=0.
	\end{align}
	In addition, 
	\begin{align}\label{add:lem2:eq2}
		\lim_{\delta \downarrow 0}\limsup_{N\to \infty}\E \gbs{\mathbbm 1_{\{G_{N-1,\beta'}(A_{\ominus}^{ \tau})\ge \delta\}}}_{N-1,\beta'}=1,\,\,\forall \epsilon>0,
	\end{align}
	and
	\begin{align}\label{add:lem2:eq3}
		\lim_{\delta\downarrow 0}\limsup_{N\to\infty}\e\bigl\la \mathbbm 1_{ \{q_P+\delta\geq R(\tau^1,\tau^2)\geq \min(R(\tau^1,\tau^3),R(\tau^2,\tau^3))-\delta\}}\big\ra_{N-1,\beta'}=1.
	\end{align}
\end{lemma}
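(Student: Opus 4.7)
The plan is to prove the three claims separately. The common thread is that the Hamiltonian $\mathcal{H}_{N-1,\beta'}$ differs from the $(N-1)$-spin Hamiltonian at parameter $\beta$ only by the factor $(N/(N-1))^{(p-1)/2}\to 1$; consequently, the overlap distribution under $\E\gbs{\cdot}_{N-1,\beta'}$ converges weakly to the same Parisi measure $\mu_P$ as under $\beta$, and the asymptotic ultrametricity \eqref{eqn:ultrametricity} transfers to the $\beta'$-system.

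For \eqref{add:lem2:eq1}, I would use the continuity of $\mu_P$ at $q_P-\epsilon$: for any $\eta>0$, once $3/(N-1)<\eta$, the event in \eqref{add:lem2:eq1} is contained in $\{|R(\tau^1,\tau^2)-(q_P-\epsilon)|<\eta\}$, whose annealed probability under $\beta'$ is at most $\mu_P([q_P-\epsilon-\eta,q_P-\epsilon+\eta])+o(1)$ by weak convergence. Sending $\eta\downarrow 0$ and using continuity at $q_P-\epsilon$ completes the claim.

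For \eqref{add:lem2:eq3}, I would combine two ingredients: (a) since $q_P=\max\,\mathrm{supp}(\mu_P)$, the point $q_P+\delta$ is always a continuity point of $\mu_P$ with $\mu_P((q_P+\delta,1])=0$, so $\E\gbs{\mathbbm{1}_{R(\tau^1,\tau^2)>q_P+\delta}}_{N-1,\beta'}\to 0$ for every $\delta>0$; (b) the asymptotic ultrametricity \eqref{eqn:ultrametricity} applied to the $\beta'$-system. A union bound on the complement of the event in \eqref{add:lem2:eq3} then yields the claim after passing to $\delta\downarrow 0$.

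The main obstacle is \eqref{add:lem2:eq2}. The plan is to argue that $W:=W_{N-1,\beta'}(A_\ominus^\tau)$ converges in distribution to the size-biased cluster-weight distribution $\nu$ of the Ruelle Probability Cascade at level $q_P-\epsilon$. Writing
\begin{align*}
\E\gbs{W^k}_{N-1,\beta'} = \E\bgbs{\prod_{\ell=1}^{k}\mathbbm{1}_{R(\tau,\tau^\ell)\geq q_P-\epsilon+3/(N-1)}}_{N-1,\beta'},
\end{align*}
the Ghirlanda-Guerra identities together with the convergence of the multi-overlap array to the RPC (available under the generic assumption; see \cite{Pan13}) give $\E\gbs{W^k}_{N-1,\beta'}\to\int w^k\,d\nu(w)$, where $\nu(dw):=\E[\sum_i W_i\,\mathbbm{1}_{W_i\in dw}]$ and $(W_i)$ are the RPC cluster weights at level $q_P-\epsilon$. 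Since $\sum_i W_i=1$ almost surely, $\nu$ is a probability measure with $\nu(\{0\})=0$ trivially, and $\nu([0,\delta))\to 0$ as $\delta\downarrow 0$ by monotone convergence. The method of moments then gives $W\Rightarrow \nu$, which is the claim. A minor wrinkle: the moment computation is cleanest when $q_P-\epsilon$ is a continuity point of $\mu_P$, but since $W$ is non-decreasing in $\epsilon$, the general $\epsilon>0$ case follows from the claim at a continuity point $\epsilon'<\epsilon$.
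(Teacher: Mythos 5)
Your proposal is correct in substance, and for \eqref{add:lem2:eq1} and \eqref{add:lem2:eq3} it follows the same route as the paper: weak convergence of $R(\tau^1,\tau^2)$ to $\mu_P$ under $\E G_{N-1,\beta'}^{\otimes 2}$ plus continuity at $q_P-\epsilon$, and ultrametricity together with $\mu_P((q_P+\delta,1])=0$, respectively. The one place where you diverge is \eqref{add:lem2:eq2}: the paper does not reprove this from scratch but invokes \cite[Lemma 3]{CP18} (which gives the statement at temperature $\beta$) and then transfers it to $\beta'$ by noting that both $\e G_{N-1,\beta}^{\otimes\infty}$ and $\e G_{N-1,\beta'}^{\otimes\infty}$ satisfy the extended Ghirlanda--Guerra identities and have the same limiting overlap distribution $\mu_P$, so their overlap arrays converge to the same Ruelle Probability Cascade and the CP18 argument applies verbatim. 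Your direct moment computation $\E\la W^k\ra_{N-1,\beta'}=\E\bigl\la\prod_{\ell\le k}\mathbbm 1_{\{R(\tau,\tau^\ell)\ge q_P-\epsilon+3/(N-1)\}}\bigr\ra_{N-1,\beta'}\to\int w^k\,d\nu(w)$, with $\nu$ the size-biased RPC cluster-weight law, $\nu(\{0\})=0$, and the monotonicity-in-$\epsilon$ reduction to continuity points, is a sound and more self-contained version of essentially the same mechanism; what it buys is independence from the black-box citation, at the cost of having to spell out the passage from the indicator (a discontinuous functional with an $N$-dependent threshold) to the RPC limit, which requires the sandwiching argument at continuity points that you only gesture at. The one step you should not leave at the heuristic level is the transfer to $\beta'$ itself: ``the factor $((N-1)/N)^{(p-1)/2}\to 1$'' does not by itself yield weak convergence of the overlap to $\mu_P$ under $\beta'$, nor the Ghirlanda--Guerra identities and ultrametricity for the $\beta'$-system. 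The paper makes this rigorous by observing $|\E F_N(\beta)-\E F_N(\beta')|\le C/N$, using convexity in $\beta_p$ and Griffith's lemma together with \cite{panchenko2008differentiability} to identify $\lim_N\E\la R(\tau^1,\tau^2)^p\ra_{N-1,\beta'}=\int q^p\,\mu_P(dq)$ (genericity then gives weak convergence), and by noting that the extended Ghirlanda--Guerra identities hold for the perturbed temperatures as well, which gives ultrametricity via \cite{panchenko2013parisi}. With that step filled in, your argument goes through.
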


\begin{remark}\label{add:remark}
	These assertions hold if $\beta'$ is replaced by $\beta.$ Indeed, under this replacement, \eqref{add:lem2:eq2} was established in \cite[Lemma 3]{CP18}, while  \eqref{add:lem2:eq3} can be concluded from \cite{panchenko2013parisi} and the fact that $R(\tau^1,\tau^2)$ converges to $\mu_P$ weakly, see \cite{panchenko2008differentiability}. The latter fact can also be used to deduce \eqref{add:lem2:eq1}.
\end{remark}
\begin{proof} 
	First of all, we claim that $R(\tau^1, \tau^2)$ converges to $\mu_P$ under $\E G_{N-1,\beta'}^{\otimes 2}$ as $N\to\infty.$ To see this, note that $\beta \mapsto \E F_N(\beta)$ and $\beta\mapsto \E F_N(\beta')$ are convex functions and that there exists a constant $C>0$ independent of $N$ such that $|\E F_N(\beta)-\E F_N(\beta')|\leq C/N$ for all $N\geq 1.$ Hence, $\E F_{N-1}(\beta)$ and $\E F_{N-1}(\beta')$ converge to the same limit, say $F(\beta),$ which can be represented using the Parisi formula. One of the useful consequence of this representation is that $F$ is partially differentiable with respect to any $\beta_p$ and from Griffith's lemma\footnote{Girffith's lemma: Let $f_N$ be a sequence of differentiable convex functions defined on an open interval $I$. Assume that $f_N$ converges $f$ pointwise on $I$. If $f$ is differentiable at some $x\in I,$ then $\lim_{N\to\infty}f_N'(x)=f'(x).$},
	\begin{align*}
		\lim_{N\to\infty}\frac{\beta_p}{2}\bigl(1-\E\bigl\la R(\tau^1,\tau^2)^p\bigr\ra_{N-1,\beta'}\bigr)
		&=\lim_{N\to\infty}\partial_p\E F_{N-1}(\beta')\\
		&=\partial_{\beta_p}\E F(\beta)=\frac{\beta_p}{2}\Bigl(1-\int_0^1q^p\mu_P(dq)\Bigr),
	\end{align*}
	where the last equality holds due to \cite{panchenko2008differentiability}. Consequently, whenever $\beta_p\neq 0,$ we have
	\begin{align*}
		\lim_{N\to\infty}\E\bigl\la R(\tau^1,\tau^2)^p\bigr\ra_{N-1,\beta'}=\int_0^1q^p\mu_P(dq)
	\end{align*}
	and our claim follows since we assume that our model is generic.
	
	Now we turn to the proof of \eqref{add:lem2:eq1}. Let $\epsilon'$ and $\epsilon''$ be any positive numbers such that $\epsilon'<\epsilon<\epsilon''$. From these, as long as $N$ is large enough, we can bound
	\begin{align*}
		&\E\bgbs{\mathbbm 1_{\{\left|R (\tau^1, \tau^2)-(q_P-\epsilon)\right| < \frac{3}{N-1}\}}}_{N-1, \beta'}\\
		&=\E\bgbs{\mathbbm 1_{\{R(\tau^1, \tau^2)<q_P-\epsilon+ \frac{3}{N-1}\}}}_{N-1, \beta'}-\E\bgbs{\mathbbm 1_{\{R(\tau^1, \tau^2)\leq q_P-\epsilon- \frac{3}{N-1}\}}}_{N-1, \beta'}\\
		&\leq\E\bgbs{\mathbbm 1_{\{R(\tau^1, \tau^2)\leq q_P-\epsilon'\}}}_{N-1, \beta'}-\E\bgbs{\mathbbm 1_{\{R(\tau^1, \tau^2)< q_P-\epsilon''\}}}_{N-1, \beta'}.
	\end{align*}
	Consequently, from the weak convergence of $R(\tau^1, \tau^2)$ to $\mu_P$ under $\E G_{N-1,\beta'}^{\otimes 2},$ we see that
	\begin{align*}
		\limsup_{N\to\infty}\E\bgbs{\mathbbm 1_{\{\left|R (\tau^1, \tau^2)-(q_P-\epsilon)\right| < \frac{3}{N-1}\}}}_{N-1, \beta'}\leq \mu_P([0,q_P-\epsilon'])-\mu_P([0,q_P-\epsilon'')).
	\end{align*}
	Hence, from the continuity of $\mu_P$ at $q_P-\epsilon,$ after sending $\epsilon'\uparrow \epsilon$ and $\epsilon''\downarrow \epsilon,$
	\begin{align*}
		\limsup_{N\to\infty}\E\bgbs{\mathbbm 1_{\{\left|R(\tau^1, \tau^2)-(q_P-\epsilon)\right| < \frac{3}{N-1}\}}}_{N-1, \beta'}=0.
	\end{align*}
	As for \eqref{add:lem2:eq2} and \eqref{add:lem2:eq3}, we recall that it is already known in \cite[Lemma 3]{CP18} that 
	\begin{align*}
		\lim_{\delta \downarrow 0}\limsup_{N\to \infty}\E \gbs{\mathbbm 1_{\{G_{N-1,\beta}(A_{\ominus}^{ \tau})\ge \delta\}}}_{N-1,\beta}=1.
	\end{align*}
	Since the overlap $R(\tau^1,\tau^2)$ under $\e G_{N-1,\beta}^{\otimes 2}$ and $\e G_{N-1,\beta'}^{\otimes 2}$ converges to the same distribution $\mu_P$ and both of the measures $\e G_{N-1,\beta}^{\otimes \infty}$ and $\e G_{N-1,\beta'}^{\otimes \infty}$ satisfy the extended Ghirlanda-Guerra identities, the limiting distributions of the Gram matrix $(R(\tau^\ell,\tau^{\ell'}))_{1\leq \ell<\ell'}$ corresponding to $(\tau^\ell)_{\ell\geq 1}$  sampled from either $G_{N-1,\beta}$ or $G_{N-1,\beta'}$ are uniquely determined by the overlap distribution $\mu_P$ and are described by the Ruelle Probability Cascades parametrized by $\mu_P,$ see \cite{Pan13}. With this, the same argument in \cite[Lemma 3]{CP18} yields \eqref{add:lem2:eq2}. For \eqref{add:lem2:eq3}, note that the extended Ghirlanda-Guerra identities also imply that for independent $\tau^1,\tau^2,\tau^3\sim G_{N-1,\beta'},$ their overlaps must be asymptotically ultrametric due to \cite{panchenko2013parisi}. From our claim, we also see that $R(\tau^1,\tau^2)$ is asymptotically less than $q_P+\delta$. These together complete the proof of \eqref{add:lem2:eq3}.
\end{proof}

\begin{lemma}\label{lem:sandwich} For $\varepsilon = 0, \pm 1$ and any $\epsilon>0$ such that $q_P-\epsilon$ is a point of continuity of $\mu_P,$ we have
	\[
	\lim_{N\to\infty}\E  \lrgbs{ \big(\,\gbs{e^{\varepsilon X_{N,\beta}(\btau)}\mathbbm 1_{A_{\oplus}^{\rho}} (\btau)}_{N-1, \beta'} - \gbs{ e^{\varepsilon X_{N,\beta}(\btau)} \mathbbm 1_{A_{\ominus}^{\rho}}(\btau)}_{N-1, \beta'}\big)^2\, }_{N, \beta} = 0.
	\]
\end{lemma}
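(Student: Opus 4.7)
The plan is to observe that since $A_\ominus^\rho \subseteq A_\oplus^\rho$ by construction, the difference inside the outer Gibbs average equals $\gbs{e^{\varepsilon X_{N,\beta}(\tau)}\mathbbm{1}_{B^\rho}(\tau)}_{N-1,\beta'}$, where $B^\rho := A_\oplus^\rho \setminus A_\ominus^\rho$ is contained in the thin shell $\{\tau \in \Sigma_{N-1}: |R(\rho,\tau) - (q_P-\epsilon)| \le 3/(N-1)\}$. It then suffices to show that the $\E\la\cdot\ra_{N,\beta}$-second moment of this shell contribution vanishes. The strategy is to decouple the exponential factor from the shell indicator by Cauchy--Schwarz, bound the former by Gaussian moments, and control the latter by passing from the $N$-system to the $(N-1)$-system Gibbs average via Lemma \ref{add:lem1} and then invoking \eqref{add:lem2:eq1}.

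First I apply Cauchy--Schwarz inside $\la\cdot\ra_{N-1,\beta'}$,
\[
\gbs{e^{\varepsilon X_{N,\beta}(\tau)}\mathbbm{1}_{B^\rho}(\tau)}_{N-1,\beta'}^2 \le \gbs{e^{2\varepsilon X_{N,\beta}(\tau)}}_{N-1,\beta'}\cdot\gbs{\mathbbm{1}_{B^\rho}(\tau)}_{N-1,\beta'},
\]
and then once more under $\E\la\cdot\ra_{N,\beta}$ to split the two factors. The exponential factor $\gbs{e^{2\varepsilon X_{N,\beta}(\tau)}}_{N-1,\beta'}$ is independent of $\rho$, so $\la\cdot\ra_{N,\beta}$ acts trivially on it; introducing two replicas and integrating out the cavity disorders $g_{\cdot N}$ first, the uniform variance bound in \eqref{add:eq4} yields
\[
\E\gbs{e^{2\varepsilon X_{N,\beta}(\tau)}}_{N-1,\beta'}^2 \le \exp\Bigl(8\varepsilon^2\sum_{p\ge 2}\beta_p^2 p\Bigr),
\]
which is uniform in $N$ and $\epsilon$.

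For the indicator factor, introducing two replicas $\tau^1,\tau^2$ gives
\[
\E\lrgbs{\gbs{\mathbbm{1}_{B^\rho}(\tau)}_{N-1,\beta'}^2}_{N,\beta} = \E\lrgbs{\gbs{\mathbbm{1}_{B^\rho}(\tau^1)\mathbbm{1}_{B^\rho}(\tau^2)}_{N-1,\beta'}}_{N,\beta}.
\]
The crucial observation is that the inner Gibbs expression is a function $\Gamma(\rho)$ that depends on $\rho$ only through the overlaps $R(\rho,\tau^j)$, and is therefore independent of the cavity disorders $g_{\cdot N}$. Lemma \ref{add:lem1} then converts the outer $\la\cdot\ra_{N,\beta}$ into $\la\cdot\ra_{N-1,\beta'}$, producing a three-replica expression which, upon discarding one of the two shell indicators, is dominated by a constant multiple of $\E\la\mathbbm{1}_{\{|R(\tau^0,\tau^1)-(q_P-\epsilon)| \le 3/(N-1)\}}\ra_{N-1,\beta'}$. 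This vanishes as $N\to\infty$ by \eqref{add:lem2:eq1}, since $q_P-\epsilon$ is a continuity point of $\mu_P$.

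The main subtlety is the change-of-measure step: the outer average is naturally over $\alpha\sim G_{N,\beta}$, but the thin-shell probability is controlled only for the $(N-1)$-system. Lemma \ref{add:lem1} is the natural bridge, and its applicability rests on the shell indicator being $g_{\cdot N}$-free; this is why the Cauchy--Schwarz decoupling at the start is essential. The $\varepsilon=0$ case, in contrast, is covered by the indicator analysis alone, without any need for exponential moment bounds.
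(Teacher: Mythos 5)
Your proposal is correct and follows essentially the same route as the paper: identify the difference as the Gibbs mass of the thin shell $A_\oplus^\rho\setminus A_\ominus^\rho$, decouple the exponential from the indicator by Cauchy--Schwarz, bound the exponential by integrating $g_{\cdot N}$ (which is independent of $G_{N-1,\beta'}$), transfer the shell probability from $\la\cdot\ra_{N,\beta}$ to $\la\cdot\ra_{N-1,\beta'}$ via Lemma \ref{add:lem1}, and conclude with \eqref{add:lem2:eq1}. The only differences (two replicas instead of Jensen for the exponential moment, and dropping one of the two shell indicators after the change of measure rather than before) are cosmetic.
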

\begin{proof} 
	The left hand side is equal to
	\begin{align*}
		\E  \lrgbs{ \gbs{e^{\varepsilon X_{N,\beta}(\btau)}\mathbbm 1_{A_{\oplus}^{\rho }\setminus A_{\ominus}^{\rho }} (\btau)}_{N-1, \beta'}^2 }_{N, \beta} 
	\end{align*}
	and by Cauchy-Schwarz inequality, it is no more than
	\begin{align*}
		&\E  \bigl\la\gbs{e^{2\varepsilon X_{N,\beta}(\btau)}}_{N-1,\beta'}\gbs{\mathbbm 1_{A_{\oplus}^{\rho }\setminus A_{\ominus}^{\rho }}  (\btau)}_{N-1, \beta'}\bigr\ra_{N,\beta}\\
		&= \E  \gbs{e^{2\varepsilon X_{N,\beta}(\btau)}}_{N-1, \beta'}\bigl\la\gbs{\mathbbm 1_{A_{\oplus}^{\rho }\setminus A_{\ominus}^{\rho }} (\btau)}_{N-1, \beta'}\bigr\ra_{N, \beta}\\
		&\le \Bigl(\E  \gbs{e^{2\varepsilon X_{N,\beta}(\btau)}}_{N-1, \beta'}^{2} \E \bigl\la\gbs{\mathbbm 1_{A_{\oplus}^{\rho }\setminus A_{\ominus}^{\rho }}  (\btau) }_{N-1, \beta'}\bigr\ra_{N,\beta}^{2} \Bigr)^{1/2}\\
		&\le \Bigl(\E  \bigl[\bigl\langle \E_{g_{\cdot N }}  e^{4\varepsilon X_{N,\beta}(\btau)}\bigr\rangle_{N-1, \beta'}\bigr] \E \bigl\la\gbs{\mathbbm 1_{A_{\oplus}^{\rho }\setminus A_{\ominus}^{\rho }}  (\btau) }_{N-1, \beta'}\bigr\ra_{N,\beta} \Bigr)^{1/2}.
	\end{align*}
	Here, using the independence between $g_{\cdot N}$ and the disorder appearing in the Gibbs expectation $\la \cdot\ra_{N-1,\beta'}$, one can get from \eqref{add:eq4} that
	\begin{align*}
		\E_{g_{\cdot N}} e^{4\varepsilon X_{N,\beta}(\btau)}&=e^{8\varepsilon^{2}\text{Var}(X_{N,\beta}(\tau))}\le e^{8\sum_{p\ge 2} p\beta_p^2 } < \infty.
	\end{align*}
	Note that this upper bound holds for $\varepsilon=0$ and $\pm1$.
	Thus,
	\begin{align*}
		&\ \ \E  \lrgbs{\bgbs{e^{\varepsilon X_{N,\beta}(\btau)}\mathbbm 1_{A_{\oplus}^{\rho }}  (\btau)- e^{\varepsilon X_{N,\beta}(\btau)} \mathbbm 1_{A_{\ominus}^{\rho }}(\btau)}^{2}_{N-1, \beta'}}_{N,\beta}\\
		&\le e^{4\sum_{p\ge 2} p\beta_p^2} \Big\{ \E \bbgbs{\bgbs{\mathbbm 1_{A_{\oplus}^{\rho}\setminus A_{\ominus}^{\rho}} (\btau) }_{N-1, \beta'}}_{N,\beta}\Big\}^{1/2}\\
		&\leq e^{(4+1/4)\sum_{p\ge 2} p\beta_p^2} \cosh(h)^{1/2}\Big\{ \E \lrgbs{\bgbs{\mathbbm 1_{A_{\oplus}^{\tau^2}\setminus A_{\ominus}^{\tau^2}} (\btau^1) }_{N-1, \beta'}}_{N-1,\beta'}\Big\}^{1/2}\\
		&=e^{(4+1/4)\sum_{p\ge 2} p\beta_p^2} \cosh^{1/2}(h)\Big\{ \E \lrgbs{\mathbbm{1}_{\{(\tau^1,\tau^2):\,|R(\tau^1,\tau^2)-(q_P-\epsilon)|\leq 3/(N-1)\}}}_{N-1,\beta'}\Big\}^{1/2},
	\end{align*}
	where the second inequality used Lemma \ref{add:lem1} and $\tau^1$ and $\tau^2$ are sampled from the inner and outer Gibbs measures, respectively. Our proof is then completed by applying \eqref{add:lem2:eq1}.
\end{proof}

With the help of the above three lemmas, we can now replace all the indicators in \eqref{eqn:cavity1} with $\mathbbm{1}_{\Sigma_{N-1}^\rho}(\tau)$:
\begin{prop} \label{thm1} 
	For $\epsilon>0$ such that $q_P-\epsilon$ is a point of continuity of $\mu_P,$
	we have that
	\begin{align}
		\label{eqn:thm1}  
		\lim_{N\to\infty}\E \bbbgbs{\Bigl(\gbs{\sigma_N}_{N, \beta}^{\alpha}-\frac{ \gbs{\sinh(X_{N,\beta}(\btau)+h)}_{N-1,\beta'}^{\rho } }{\gbs{\cosh(X_{N,\beta}(\btau)+h)}_{ N-1,\beta'}^{\rho }}\Bigr)^2}_{N,\beta} =0.
	\end{align}
\end{prop}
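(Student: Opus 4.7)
The plan is to compare the cavity representation \eqref{eqn:cavity1} of $\gbs{\sigma_N}_{N,\beta}^\alpha$ against the target sinh/cosh ratio by replacing all three indicators $\mathbbm 1_{\Sigma_N^\alpha}((\tau,\pm 1))$ and $\mathbbm 1_{\Sigma_{N-1}^\rho}(\tau)$ that appear in the formulas; the cost of each swap will be controlled by Lemma \ref{lem:sandwich} via the sandwich \eqref{eqn:nestedsets}. Abbreviate
\[
A=\bgbs{e^{X_{N,\beta}(\tau)+h}\mathbbm 1_{\Sigma_N^\alpha}\bigl((\tau,1)\bigr)}_{N-1,\beta'},\quad B=\bgbs{e^{-X_{N,\beta}(\tau)-h}\mathbbm 1_{\Sigma_N^\alpha}\bigl((\tau,-1)\bigr)}_{N-1,\beta'},
\]
and let $A',B'$ be the analogues with $\mathbbm 1_{\Sigma_{N-1}^\rho}(\tau)$ in place of the two $\Sigma_N^\alpha$-indicators. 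Then \eqref{eqn:cavity1} reads $\gbs{\sigma_N}_{N,\beta}^\alpha=(A-B)/(A+B)$, while the target ratio equals $(A'-B')/(A'+B')$ after the common factor $W_{N-1,\beta'}(\Sigma_{N-1}^\rho)$ cancels in numerator and denominator. The elementary identity
\[
\frac{A-B}{A+B}-\frac{A'-B'}{A'+B'}=\frac{2\bigl(B'(A-A')-A'(B-B')\bigr)}{(A+B)(A'+B')},
\]
together with $A',B'\le A'+B'$, yields the pointwise bound
\[
\Bigl|\gbs{\sigma_N}_{N,\beta}^\alpha-\frac{\gbs{\sinh(X_{N,\beta}(\tau)+h)}_{N-1,\beta'}^\rho}{\gbs{\cosh(X_{N,\beta}(\tau)+h)}_{N-1,\beta'}^\rho}\Bigr|\le \frac{2(|A-A'|+|B-B'|)}{A+B}.
\]

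I will first bound the numerator. From \eqref{eqn:nestedsets}, both $\mathbbm 1_{\Sigma_N^\alpha}((\tau,1))$ and $\mathbbm 1_{\Sigma_{N-1}^\rho}(\tau)$ lie between $\mathbbm 1_{A_\ominus^\rho}$ and $\mathbbm 1_{A_\oplus^\rho}$, which gives
\[
|A-A'|\le e^h\bgbs{e^{X_{N,\beta}(\tau)}\bigl(\mathbbm 1_{A_\oplus^\rho}-\mathbbm 1_{A_\ominus^\rho}\bigr)}_{N-1,\beta'},
\]
and analogously for $|B-B'|$ with $e^{-X_{N,\beta}(\tau)}$ in place of $e^{X_{N,\beta}(\tau)}$. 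Squaring and invoking Lemma \ref{lem:sandwich} with $\varepsilon=\pm1$ then delivers $\lim_{N\to\infty}\E\gbs{|A-A'|^2+|B-B'|^2}_{N,\beta}=0$.

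The remaining difficulty, and the main obstacle, is that the denominator $A+B$ admits no deterministic lower bound. To address this, I use the identity $A+B=(Z_N/Z_{N-1,\beta'})\cdot W_{N,\beta}(\Sigma_N^\alpha)$, together with $Z_N/Z_{N-1,\beta'}=2\gbs{\cosh(X_{N,\beta}(\tau)+h)}_{N-1,\beta'}\ge 2$, to get $A+B\ge 2W_{N,\beta}(\Sigma_N^\alpha)$. On the event $E_\delta:=\{W_{N,\beta}(\Sigma_N^\alpha)\ge\delta\}$ the squared difference is therefore bounded by $C\delta^{-2}(|A-A'|^2+|B-B'|^2)$, while on $E_\delta^c$ it is at most $4$ since both ratios lie in $[-1,1]$. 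Combining,
\[
\E\bgbs{(\cdots)^2}_{N,\beta}\le \frac{C}{\delta^2}\E\bgbs{|A-A'|^2+|B-B'|^2}_{N,\beta}+4\,\E\bgbs{\mathbbm 1_{E_\delta^c}}_{N,\beta}.
\]
Sending $N\to\infty$ kills the first term by the previous paragraph, and sending $\delta\downarrow 0$ afterwards kills the second by the $G_{N,\beta}$-analogue of \eqref{add:lem2:eq2}, which holds by \cite[Lemma 3]{CP18} (cf.\ Remark \ref{add:remark}) and ultimately rests on the ultrametric/Ghirlanda--Guerra structure of the generic mixed $p$-spin model.
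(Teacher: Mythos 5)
Your proof is correct, and its skeleton coincides with the paper's: express $\gbs{\sigma_N}_{N,\beta}^{\alpha}$ via \eqref{eqn:cavity1}, swap every indicator through the sandwich \eqref{eqn:nestedsets} at a cost controlled by Lemma \ref{lem:sandwich}, and split off an event where a pure-state weight is small, on which the bounded ratios contribute negligibly. The genuine difference is the treatment of the denominator. The paper lower-bounds both denominators by $W_{N-1,\beta'}(A_{\ominus}^{\rho})$ (using $\cosh\ge 1$), restricts to $\{W_{N-1,\beta'}(A_{\ominus}^{\rho})\ge\delta\}$, and kills the complementary event via the change-of-measure Lemma \ref{add:lem1} combined with \eqref{add:lem2:eq2} for the $(N-1,\beta')$-system. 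Your algebraic identity only requires a lower bound on $A+B$, which you get from the exact relation $A+B=(Z_{N,\beta}/Z_{N-1,\beta'})\,W_{N,\beta}(\Sigma_N^{\alpha})\ge 2\,W_{N,\beta}(\Sigma_N^{\alpha})$, and you then dispose of $\{W_{N,\beta}(\Sigma_N^{\alpha})<\delta\}$ by the weight bound at level $(N,\beta)$; this is indeed available, since \cite[Lemma 3]{CP18} (as recorded in Remark \ref{add:remark}, with $\beta$ in place of $\beta'$, relabeling $N-1\to N$) gives the statement for the smaller set $A_{\ominus}^{\alpha}\subset\Sigma_N^{\alpha}$, hence for $\Sigma_N^{\alpha}$. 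The payoff of your variant is that no change of measure is needed at this step and only one denominator must be controlled; the paper's variant stays entirely within the $(N-1,\beta')$-quantities it has already prepared. Both routes are legitimate, and all your intermediate steps (the two-ratio identity, the bound $|A-A'|\le e^h\bigl(\gbs{e^{X_{N,\beta}(\tau)}\mathbbm 1_{A_{\oplus}^{\rho}}(\tau)}_{N-1,\beta'}-\gbs{e^{X_{N,\beta}(\tau)}\mathbbm 1_{A_{\ominus}^{\rho}}(\tau)}_{N-1,\beta'}\bigr)$, and the order of limits $N\to\infty$ then $\delta\downarrow 0$ for fixed $\epsilon$) check out.
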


\begin{proof}
	Define
	\begin{align*}
		A_{\balpha} &:=\frac{1}{2}\bgbs{e^{X_{N,\beta}(\btau)+h}\mathbbm 1_{\bSigma_{N}^{ \alpha}}\big((\btau, 1)\big) - e^{-X_{N,\beta}(\btau)-h}\mathbbm 1_{\bSigma_{N}^{\alpha}}\big((\btau, -1)\big)\,}_{N-1, \beta'},\\
		B_{\balpha} &:=\frac{1}{2}\bgbs{e^{X_{N,\beta}(\btau)+h}\mathbbm 1_{\bSigma_{N}^{ \alpha}}\big((\btau, 1)\big) + e^{-X_{N,\beta}(\btau)-h}\mathbbm 1_{\bSigma_{N}^{\alpha}}\big((\btau, -1)\big)\,}_{N-1, \beta'},\\
		A_\rho &:= \bgbs{\sinh(X_{N,\beta}(\btau)+h)\mathbbm 1_{\bSigma_{N-1}^{\rho }}(\btau)}_{N-1,\beta'},\\
		B_\rho&:= \bgbs{\cosh(X_{N,\beta}(\btau)+h)\mathbbm 1_{\bSigma_{N-1}^{\rho}}(\btau) }_{N-1,\beta'},
	\end{align*}
	and then we can express $$\gbs{\sigma_N}_{N, \beta}^{\alpha} =
	\frac{A_{\balpha}} {B_{\balpha}}
	\quad\mbox{and}\quad\frac{ \gbs{\sinh(X_{N,\beta}(\btau)+h)}_{ N-1,\beta'}^{\rho } }{\gbs{\cosh(X_{N,\beta}(\btau)+h)}_{ N-1,\beta'}^{\rho }} = \frac{A_{\rho}}{B_{\rho}}.$$  For any $\delta >0$, the expectation in \eqref{eqn:thm1} is no more than
	\begin{align*}
		\notag
		\E\lrgbs{\Bigl(\frac{A_{\balpha}} {B_{\balpha}} - \frac{A_{\rho}}{B_{\rho}}\Bigr)^2}_{N,\beta}
		&\le C \E\lrgbs{\Big(\frac{A_{\balpha}} {B_{\balpha}} -\frac{A_{\balpha}} {B_{\balpha}}\mathbbm 1_{\{G_{N-1,\beta'}(A_{\ominus}^{ \rho })\ge \delta\}}  \Big)^2}_{N,\beta}\\
		\notag
		&\quad + C \E\lrgbs{\Big(\frac{A_{\balpha}} {B_{\balpha}} -\frac{A_{\rho }}{B_{\rho }} \Big)^2 \mathbbm 1_{\{G_{N-1,\beta'}(A_{\ominus}^{ \rho })\ge \delta\}}}_{N,\beta}\\
		&\quad + C \E\lrgbs{\Big(\frac{A_{\rho }}{B_{\rho }} -\frac{A_{\rho }}{B_{\rho }}\mathbbm 1_{\{G_{N-1,\beta'}(A_{\ominus}^{\rho })\ge \delta\}} \Big)^2 }_{N,\beta}.
	\end{align*}
	Using that $|A_{\balpha}/B_{\balpha}|\le 1$,  $|A_{\rho }/B_{\rho }| \le 1$ and combining the first term and the third term, we get
	\begin{align}
		\notag \E&\lrgbs{\Bigl(\frac{A_{\balpha}} {B_{\balpha}} - \frac{A_{\rho }}{B_{\rho }}\Bigr)^2}_{N,\beta}\\
		\label{eqn:temp-ineq}
		\le &\,2C\E\bgbs{\mathbbm 1_{\{G_{N-1,\beta'}(A_{\ominus}^{ \rho })< \delta\}}}_{N,\beta} + C \E\lrgbs{\Bigl(\frac{A_{\balpha}}{B_{\balpha}} -\frac{A _{\rho }}{B _{\rho }} \Bigr)^2 \mathbbm 1_{\{G_{N-1,\beta'}(A_{\ominus}^{ \rho })\ge \delta\}}}_{N,\beta}.
	\end{align}
	Since $G_{N-1,\beta'}(A_{\ominus}^{\rho})$ is independent of $g_{\cdot N}$ for any $\rho\in \Sigma_{N-1},$ we can use \eqref{add:lem2:eq1} to bound
	\begin{align*}
		\E\bgbs{\mathbbm 1_{\{G_{N-1,\beta'}(A_{\ominus}^{ \rho })< \delta\}}}_{N,\beta} 
		&\le e^{\sum_{p\ge 2}\frac{\beta_p^2 p}{2}}\cosh(h)\, \E \gbs{\mathbbm 1_{\{G_{N-1,\beta'}(A_{\ominus}^{ \tau})< \delta\}}}_{N-1, \beta'}\to 0
	\end{align*}
	as $N\to\infty$ and then $\delta\downarrow 0,$ where the last limit used \eqref{add:lem2:eq2}.
	As for the second expectation in \eqref{eqn:temp-ineq}, we note that, on the event $\{G_{N-1,\beta'}(A_{\ominus}^{ \rho })\ge \delta\}$, 
	\begin{align*}
		\left|\frac{A_{\balpha}}{B_{\balpha}} -\frac{A _{\rho }}{B _{\rho }} \right|&\le \left\lvert\frac{A _{\rho }}{B _{\rho }} \right\rvert\frac{|B_{\balpha}-B _{\rho }|}{B_{\balpha}} + \frac{|A _{\rho }-A_{\balpha}|}{B_{\balpha}}\\
		&\le \frac{|B_{\balpha}-B _{\rho }|}{G_{N-1,\beta'}(A_{\ominus}^{\rho })} + \frac{|A _{\rho }-A_{\balpha}|}{G_{N-1,\beta'}(A_{\ominus}^{\rho })}\le \delta^{-1}[|B_{\balpha}-B _{\rho }|+ |A _{\rho }-A_{\balpha}|],
	\end{align*}
	where we used that $|A _{\rho }/B _{\rho }|\le 1$ and that from \eqref{eqn:nestedsets},
	\begin{align*}
		B_{\balpha},B _{\rho }& \ge \bgbs{\cosh\big( X_{N,\beta}(\btau)+h\big)\mathbbm 1_{A_{\ominus}^{\rho }}(\btau)}_{N-1, \beta'}\ge \bgbs{\mathbbm 1_{A_{\ominus}^{\rho }}(\btau)}_{N-1, \beta'}=G_{N-1,\beta'}(A_{\ominus}^{\rho }).
	\end{align*}
	Thus, for any $\delta >0$,
	\begin{align*}
		\E\lrgbs{\Bigl(\frac{A_{\balpha}}{B_{\balpha}} -\frac{A _{\rho }}{B _{\rho }} \Bigr)^2 \mathbbm 1_{\{G_{N-1,\beta'}(A_{\ominus}^{\rho })\ge \delta\}}}_{N,\beta}\le C \delta^{-2}\Bigl(  \E \bgbs{(A_{\balpha}-A _{\rho })^2}_{N,\beta} +  \E \bgbs{(B_{\balpha}-B _{\rho })^2}_{N,\beta}\Bigr).
	\end{align*}
	Here, from Lemma \ref{lem:sandwich} and the inequality in \eqref{eqn:nestedsets}, we have 
	\begin{align*}
		\E \bgbs{(A_{\balpha}-A _{\rho })^2}_{N,\beta} &\le C'\E \lrgbs{\big(\,\bgbs{e^{ X_{N,\beta}(\btau)+h}\mathbbm 1_{A_{\oplus}^{\rho }} (\btau)}_{N-1, \beta'} - \bgbs{ e^{ X_{N,\beta}(\btau)+h} \mathbbm 1_{A_{\ominus}^{\rho }}(\btau)}_{N-1, \beta'}\big)^2}_{N,\beta}\\
		&\quad + C'\E \lrgbs{\big(\,\bgbs{e^{- X_{N,\beta}(\btau)-h}\mathbbm 1_{A_{\oplus}^{\rho }} (\btau)}_{N-1, \beta'} - \bgbs{ e^{- X_{N,\beta}(\btau)-h} \mathbbm 1_{A_{\ominus}^{\rho }}(\btau)}_{N-1, \beta'}\big)^2}_{N,\beta} \\
		&\quad \to 0 \quad \text{as }N \to \infty.
	\end{align*}
	Similarly, $ \E \bgbs{(B_{\balpha}-B _{\rho })^2}_{N,\beta}\to 0$. 
	These imply that the second term in \eqref{eqn:temp-ineq} also vanishes for any $\delta>0$, completing our proof.
\end{proof}
\subsection{Central limit theorem for the cavity field \label{sec:cavity2}}
The goal of this section is to derive a central limit theorem for the centered cavity field defined as
\begin{align}	
	\nonumber \dot X_{N,\beta}^{\rho}( \tau) &:= X_{N,\beta}(\tau)-\bigl\la X_{N,\beta}(\tau)\bigr\ra_{N-1,\beta'}^{\rho}\\
	\label{eqn:defYnbeta}&=\sum_{p\ge 2}\frac{\beta_p\sqrt{p!}}{N^{\frac{p-1}{2}}} \sum_{1\le i_1< \cdots<i_{p-1}\le N-1} g_{i_1\ldots i_{p-1},N}\bigl(\tau_{i_1}\cdots \tau_{i_{p-1}}-\gbs{\tau_{i_1}\cdots\tau_{i_{p-1}}}_{N-1, \beta'}^{\rho}\bigr)
\end{align}
for any $\rho,\tau\in \Sigma_{N-1}.$

\begin{theorem}[Central Limit Theorem] 
	\label{lemma4-CLT}
	Assume that $U$ is an infinitely differentiable function on $\mathbb R$ satisfying
	\[
	\sup_{0\le \gamma \le M }\E|U^{(d)}(\gamma z )|^k  < \infty,\ \  \forall d, k\in \mathbb Z,\  0\le d, k, M < \infty.  
	\] 
	Then, for any integer $r\ge 1$, we have
	\begin{align}
		\label{eqn:lemma4}
		\lim_{\epsilon \downarrow 0}
		\limsup_{N\to\infty}\E \lrgbs{ \Big[\bgbs{U\big(
				\dot X_{N,\beta}^{\rho}( \tau)
				\big)}^{\rho}_{N-1, \beta'} - \E U\big(\xi \sqrt{\zeta'(1)-\zeta'(q_P)} \big)\Big]^{2r}}_{N,\beta}=0,
	\end{align}
	where $\xi\sim N(0,1)$ is independent of all other random variables.
\end{theorem}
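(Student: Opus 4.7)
The plan is to prove Theorem~\ref{lemma4-CLT} by the method of moments, relying on a Gaussian calculation on the cavity disorder $g_{\cdot N}$ combined with the pure-state concentration of the overlap around $q_P$ furnished by ultrametricity. Set $\sigma^{2}=\zeta'(1)-\zeta'(q_P)$, $m=\E U(\xi\sigma)$, and $V_{\rho}=\bgbs{U(\dot X^{\rho}_{N,\beta}(\tau))}^{\rho}_{N-1,\beta'}$. Expanding $(V_{\rho}-m)^{2r}$ by the binomial theorem, it suffices to establish, for every integer $0\le k\le 2r$, that
\begin{align*}
	\lim_{\epsilon\downarrow 0}\limsup_{N\to\infty}\E\bgbs{V_{\rho}^{k}}_{N,\beta}= m^{k},
\end{align*}
since then $\E\gbs{(V_{\rho}-m)^{2r}}_{N,\beta}\to(m-m)^{2r}=0$. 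Using replicas $\tau^{1},\dots,\tau^{k}$ drawn i.i.d.\ from $G^{\rho}_{N-1,\beta'}$, one has $V_{\rho}^{k}=\bgbs{\prod_{j=1}^{k}U(\dot X^{\rho}(\tau^{j}))}^{\rho}_{\otimes k}$, so the target becomes a joint central limit theorem for $(\dot X^{\rho}(\tau^{j}))_{j=1}^{k}$.

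The heart of the argument is the following Gaussian computation. Assume momentarily that $\rho$ is independent of $g_{\cdot N}$. Then, for fixed configurations $\tau^{1},\dots,\tau^{k}$, the vector $(\dot X^{\rho}(\tau^{j}))_{j=1}^{k}$ is centered and jointly Gaussian over $g_{\cdot N}$, with covariance
\begin{align*}
	\E_{g_{\cdot N}}\dot X^{\rho}(\tau^{a})\dot X^{\rho}(\tau^{b})=\zeta'(R(\tau^{a},\tau^{b}))-\bgbs{\zeta'(R(\tau^{a},\tau))}^{\rho}-\bgbs{\zeta'(R(\tau,\tau^{b}))}^{\rho}+\bgbs{\zeta'(R(\tau,\tau'))}^{\rho}+o_{N}(1),
\end{align*}
where $\tau,\tau'$ denote the auxiliary Gibbs replicas produced by expanding the subtraction $\bgbs{X_{N,\beta}}^{\rho}$. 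The pure-state concentration of the overlap (a consequence of ultrametricity \eqref{eqn:ultrametricity} and the fact that $q_{P}$ is the right endpoint of $\operatorname{supp}\mu_{P}$) forces every off-diagonal overlap entering this formula to equal $q_{P}$ up to $o(1)$ with Gibbs probability tending to one, while $R(\tau^{a},\tau^{a})=1$ trivially. Substituting yields a covariance matrix of the form $\sigma^{2}I+o(1)$, and conditional Gaussian integration in $g_{\cdot N}$ gives $\E_{g_{\cdot N}}\prod_{j=1}^{k}U(\dot X^{\rho}(\tau^{j}))\approx m^{k}$, with errors controlled via Taylor expansion and the moment hypothesis on $U$ and its derivatives.

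The subtlety, already flagged in the proof sketch of the main theorem, is that $\rho$ is actually the first $N-1$ coordinates of $\alpha\sim G_{N,\beta}$ and is therefore correlated with $g_{\cdot N}$ through the full Hamiltonian. I would handle this via the Radon--Nikodym argument underlying Lemma~\ref{add:lem1}: the law of $\rho$ under $\E\gbs{\,\cdot\,}_{N,\beta}$ has density $\cosh(X_{N,\beta}(\tau)+h)/\gbs{\cosh(X_{N,\beta}+h)}_{N-1,\beta'}$ with respect to $\E\gbs{\,\cdot\,}_{N-1,\beta'}$. Treating this factor as a multiplicative weight with good exponential moments (and using the pure-state ultrametricity statement \eqref{add:lem2:eq3} directly under $G_{N-1,\beta'}$), one replaces $\rho\sim G_{N,\beta}$ by $\tau\sim G_{N-1,\beta'}$, after which the cavity disorders are genuinely independent of the conditioning and the computation of the previous paragraph applies.

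The main obstacle is the covariance calculation: because $\dot X^{\rho}(\tau)$ carries the Gibbs-averaged subtraction $\gbs{X_{N,\beta}}^{\rho}$, four distinct Gibbs replicas under $G^{\rho}_{N-1,\beta'}$ enter the covariance formula (two outer replicas $\tau^{a},\tau^{b}$ plus two auxiliary ones $\tau,\tau'$ from the subtractions), and the pure-state overlap concentration must be invoked for all pairs \emph{simultaneously}. This concentration must furthermore be integrable against the Radon--Nikodym weight from the previous paragraph and against the polynomial factors produced by Taylor-expanding $U$, which is precisely why the hypothesis on $U$ is phrased in terms of exponential moments of all its derivatives.
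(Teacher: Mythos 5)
Your proposal assembles the same ingredients as the paper's proof, just organized differently: your conditional covariance identity for $(\dot X^{\rho}_{N,\beta}(\tau^{j}))_{j}$ is correct, and evaluating it via the pure-state concentration of $R(\tau^{a},\tau^{b})$ and $R(\tau^{a},\tau)$ at $q_P$ is exactly what the paper does in the estimates \eqref{eqn:CLT-3terms:eq1} and \eqref{eqn:CLT-3terms:eq2} through Lemma \ref{concentration}. Moreover, the step you describe as ``conditional Gaussian integration with errors controlled by Taylor expansion'' is, once made quantitative, a Gaussian interpolation with integration by parts between a vector of covariance $\sigma^{2}I+o(1)$ and $\sigma^{2}I$ -- i.e.\ precisely the paper's $\phi_N(t),\psi_N(t)$ computation, applied conditionally on the replicas. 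The paper instead interpolates the centered $2r$-th moment directly, keeping the change-of-measure weight $\cosh(X_{N,\beta}(\rho)+h)$ inside the interpolation; the price is the extra covariance term \eqref{eqn:CLT-3terms:eq3} between the weight and the cavity field, which again vanishes by Lemma \ref{concentration}. So at the technical core the two arguments coincide; yours is a method-of-moments repackaging.

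The one genuine gap is the order of your reductions. You expand $(V_{\rho}-m)^{2r}$ binomially first and reduce to $\E\la V_{\rho}^{k}\ra_{N,\beta}\to m^{k}$, and only then invoke the change of measure. But the exact Radon--Nikodym weight is $\cosh(X_{N,\beta}(\rho)+h)/\la\cosh(X_{N,\beta}(\tau)+h)\ra_{N-1,\beta'}$, and what Lemma \ref{add:lem1} provides is only the one-sided bound obtained by discarding the denominator, valid for nonnegative integrands and useless for a two-sided limit of signed moment terms. To conclude $\E\la V_{\rho}^{k}\ra_{N,\beta}\to m^{k}$ you would have to control the global normalization $\la\cosh(X_{N,\beta}(\tau)+h)\ra_{N-1,\beta'}$ -- which is not a pure-state quantity and is not touched by \eqref{add:lem2:eq3} or Lemma \ref{concentration} -- or prove that the weight asymptotically decouples from $V_{\rho}^{k}$; ``the computation of the previous paragraph applies'' does not yet follow. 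Two fixes stay inside your framework: (i) apply the change of measure \emph{before} expanding, to the nonnegative quantity $(V_{\rho}-m)^{2r}$ itself, so that an upper bound suffices (the target limit is $0$), and then run the moment method under $\E\la\cdot\ra_{N-1,\beta'}$, where $\rho$ is genuinely independent of $g_{\cdot N}$; or (ii) write $\E\la V_{\rho}^{k}\ra_{N,\beta}-m^{k}=\E\la \mathrm{RN}\,(V_{\rho}^{k}-m^{k})\ra_{N-1,\beta'}$ and use Cauchy--Schwarz with $\mathrm{RN}\le\cosh(X_{N,\beta}(\rho)+h)$, reducing to the unweighted statement $\E\la (V_{\rho}^{k}-m^{k})^{2}\ra_{N-1,\beta'}\to 0$. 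If instead you carry the weight into the Gaussian computation, you must additionally show $\E_{g_{\cdot N}}\bigl[\dot X^{\rho}_{N,\beta}(\tau^{a})X_{N,\beta}(\rho)\bigr]\to 0$, which is exactly the paper's estimate \eqref{eqn:CLT-3terms:eq3}. Finally, recombining the binomial terms requires genuine two-sided convergence of each $\E\la V_{\rho}^{k}\ra$, not merely $\limsup$ control, so state the intermediate results as limits.
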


\begin{remark}
	As we will see later, this theorem is special case of the multivariate central limit theorem stated in Theorem \ref{thm:mvclt} below. Here we provide a standalone proof for Theorem \ref{lemma4-CLT} to illustrate the main steps as a warm-up for the proof of Theorem \ref{thm:mvclt}. 
\end{remark}

	 The proof of Theorem \ref{lemma4-CLT} relies on the concentration of the overlap within the pure state established in the following lemma.

\begin{lemma}[Concentration of the overlap]\label{concentration}
	We have that
	\begin{align*}
		\lim_{\epsilon\downarrow 0}\limsup_{N\to\infty}\e \bigl\la \bigl\la|R(\tau^1,\tau^2)-q_P|\bigr\ra_{N-1,\beta'}^\rho\bigr\ra_{N-1,\beta'}&=0,\\
		\lim_{\epsilon\downarrow 0}\limsup_{N\to\infty}\e \bigl\la \bigl\la|R(\tau,\rho)-q_P|\bigr\ra_{N-1,\beta'}^\rho\bigr\ra_{N-1,\beta'}&=0,	
	\end{align*}
where in both equations, the $\rho$'s are sampled from the outer Gibbs measure and $\tau,\tau^1,\tau^2$ are sampled from the inner one.
\end{lemma}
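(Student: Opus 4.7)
The plan is to treat both displayed limits in parallel by reducing each to a concentration inequality for $R-q_P$, then separately controlling the upper and lower tails using two ingredients that are available for the $\beta'$-system by Remark \ref{add:remark}: the weak convergence of $R(\tau^1,\tau^2)$ to the Parisi measure $\mu_P$ under $\e G^{\otimes 2}_{N-1,\beta'}$, whose support is contained in $[0,q_P]$; and the asymptotic ultrametricity \eqref{add:lem2:eq3} for three independent replicas from $G_{N-1,\beta'}$.

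The key reduction I will use is the identity
$$
\bigl\la f(\tau^1,\tau^2)\bigr\ra^{\rho}_{N-1,\beta'} \;=\; \frac{\bigl\la f(\tau^1,\tau^2)\mathbbm{1}_{\tau^1\in\Sigma_{N-1}^{\rho}}\mathbbm{1}_{\tau^2\in\Sigma_{N-1}^{\rho}}\bigr\ra^{\otimes 2}_{N-1,\beta'}}{W_{N-1,\beta'}(\Sigma_{N-1}^{\rho})^2},
$$
combined with the inclusion $A^{\rho}_{\ominus}\subseteq \Sigma_{N-1}^{\rho}$ and \eqref{add:lem2:eq2} to restrict attention to the event $\{W_{N-1,\beta'}(A^{\rho}_{\ominus})\ge \delta_1\}$, on which the denominator is bounded below by $\delta_1^{2}$. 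Up to a remainder that vanishes as $N\to\infty$ and then $\delta_1\downarrow 0$, this will bound $\e\bigl\la\bigl\la \mathbbm{1}_E\bigr\ra^{\rho}_{N-1,\beta'}\bigr\ra_{N-1,\beta'}$ by $\delta_1^{-2}\,\e\bigl\la \mathbbm{1}_E\mathbbm{1}_{\tau^1,\tau^2\in \Sigma_{N-1}^{\rho}}\bigr\ra^{\otimes 3}_{N-1,\beta'}$ for any event $E$ on $(\tau^1,\tau^2)$, where the third replica is $\rho$.

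For the first equation I will split $|R(\tau^1,\tau^2)-q_P|\le \delta+2\,\mathbbm{1}_{\{R>q_P+\delta\}}+2\,\mathbbm{1}_{\{R<q_P-\delta\}}$ and feed each indicator into the reduction. The upper-tail numerator tends to zero by the weak convergence of $R(\tau^1,\tau^2)$ to $\mu_P$, whose support lies in $[0,q_P]$. For the lower tail I will exploit that on $\{\tau^1,\tau^2\in \Sigma_{N-1}^{\rho}\}$ both $R(\tau^\ell,\rho)>q_P-\epsilon$, while \eqref{add:lem2:eq3} with tolerance $\delta'$ forces $R(\tau^1,\tau^2)\ge q_P-\epsilon-\delta'$ outside an event of vanishing $\e G^{\otimes 3}_{N-1,\beta'}$-probability; choosing $\epsilon+\delta'<\delta$ then makes the lower-tail indicator vanish on the good event. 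Sending $N\to\infty$, then $\delta'\downarrow 0$ and $\delta_1\downarrow 0$, then $\epsilon\downarrow 0$, and finally $\delta\downarrow 0$ closes the first equation. For the second equation, the lower tail is automatic once $\epsilon<\delta$, since $\tau\in \Sigma_{N-1}^{\rho}$ deterministically yields $R(\tau,\rho)>q_P-\epsilon$, and the upper tail is handled by the same reduction (now with a single $\delta_1^{-1}$ factor and a single indicator $\mathbbm{1}_{\tau\in \Sigma_{N-1}^{\rho}}$) together with $R(\tau,\rho)\Rightarrow \mu_P$ under $\e G^{\otimes 2}_{N-1,\beta'}$.

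The main technical obstacle will be the passage from the conditional to the unconditional Gibbs expectation in the reduction step, which introduces the denominator $W_{N-1,\beta'}(\Sigma_{N-1}^{\rho})^2$; this is harmless only because \eqref{add:lem2:eq2} applied to $A^{\rho}_{\ominus}\subseteq \Sigma_{N-1}^{\rho}$ supplies a quantitative lower bound on the Gibbs mass of the asymptotic pure state, letting me absorb the denominator into the finite constant $\delta_1^{-2}$ on a dominant event. Once this point is handled, the remaining work reduces to the two probabilistic inputs recalled at the outset, and the iterated limits combine routinely.
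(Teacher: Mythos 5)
Your proposal is correct and follows essentially the same route as the paper: bound the conditional average by $2$ off the event that the pure state carries Gibbs mass at least $\delta_1$ (handled by \eqref{add:lem2:eq2}), and on that event pass to unconditional averages with the denominator absorbed into $\delta_1^{-2}$ (resp.\ $\delta_1^{-1}$), then conclude via ultrametricity \eqref{add:lem2:eq3} together with the constraint $R(\tau^\ell,\rho)>q_P-\epsilon$. The only cosmetic difference is that you invoke the weak convergence of the overlap to $\mu_P$ directly for the upper tail, whereas the paper uses the bound $R(\tau^1,\tau^2)\le q_P+\delta$ already built into \eqref{add:lem2:eq3}, which comes from the same fact.
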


\begin{proof}
	For any $\delta > 0$, letting $\mathcal{E}_{\delta,\rho }= \{G_{N-1,\beta'}(\Sigma_{N-1}^{\rho }) \ge \delta\}$, we have
	\begin{align}
		\notag &\E \bgbs{ \bgbs{\bigl|
				R(\tau^1,\tau^2)-q_P
				|}_{N-1, \beta'}^{\rho } }_{N-1, \beta'}\\
		\notag
		&=\E \bgbs{\bgbs{\big|
				R(\tau^1,\tau^2)-q_P
				\big|}_{N-1, \beta'}^{\rho} \mathbbm 1_{\mathcal{E}_{\delta, \rho }^c} }_{N-1, \beta'}  \\
		\notag	&
		+  \E \lrgbs{\frac{\big|R(\tau^1,\tau^2)-q_P\big| \mathbbm 1_{\Sigma_{N-1}^{\rho }}(\tau^1)\mathbbm 1_{\Sigma_{N-1}^{\rho }}(\tau^2)}{G^2_{N-1,\beta'}(\Sigma_{N-1}^{\rho })}\mathbbm 1_{\mathcal{E}_{\delta, \rho }}}_{N-1, \beta'}\\
		\label{eqn:temp2} &\le 2 \E \bgbs{\mathbbm 1_{\mathcal{E}_{\delta,\rho }^c} }_{N-1, \beta'}
		+\frac{1}{\delta^2} \E \bgbs{\big| 
			R(\tau^1,\tau^2)-q_P\big|\mathbbm 1_{\Sigma_{N-1}^{\rho }}(\tau^1)\mathbbm 1_{\Sigma_{N-1}^{\rho }}(\tau^2)}_{N-1, \beta'},
	\end{align}
	where in the second expectations on the right-hand side, $\rho,\tau^1,\tau^2$ are understood as i.i.d. copies from $G_{N-1,\beta'}.$
	From \eqref{add:lem2:eq2}, the first term in \eqref{eqn:temp2} vanishes as $N\to\infty$ and then $\delta\downarrow 0.$ Next, since both $R(\rho,\tau^1)$ and $R(\rho,\tau^2)$ are at least $q_P-\epsilon$, it follows from \eqref{add:lem2:eq3} that $|R(\tau^1,\tau^2)-q_P|\leq 2\epsilon$ with probability nearly one under $\e G_{N-1,\beta'}^{\otimes 2}.$ Hence, the second term in \eqref{eqn:temp2} also vanishes as $\epsilon\downarrow 0$, and this completes the proof of the first assertion. As for the second one, we have
	\begin{align*}
		&\E \bgbs{ \bgbs{\bigl|
				R(\rho,\tau)-q_P
				|}_{N-1, \beta'}^{\rho } }_{N-1, \beta'}\le 2 \E \bgbs{\mathbbm 1_{\mathcal{E}_{\delta,\rho }^c} }_{N-1, \beta'}
		+\frac{1}{\delta} \E \bgbs{\big| 
			R(\rho,\tau)-q_P\big|\mathbbm 1_{\Sigma_{N-1}^{\rho }}(\tau)}_{N-1, \beta'}.
	\end{align*}
	Again the first term vanishes in the limit due to \eqref{add:lem2:eq2}.
	From \eqref{add:lem2:eq3} and the constraint $\Sigma_{N-1}^{\rho}$, we see that $|R(\rho,\tau)-q_P|\leq 2\epsilon$ with probability nearly one under $\e G_{N-1,\beta'}^{\otimes 2}.$  Hence the second term also vanishes, completing our proof.
\end{proof}

\begin{remark}
	\label{add:remark2}\rm Following the same proof, it can also be shown that $$\lim_{\epsilon\downarrow 0}\limsup_{N\to\infty}\e\bigl\la\bigl|\|\la \sigma\ra_{N,\beta}^\alpha\|^2-q_P\bigr|^2 \bigr\ra_{N,\beta}=\lim_{\epsilon\downarrow 0}\limsup_{N\to\infty}\e\bigl\la\bigl|\bigl\la R(\sigma^1,\sigma^2)\bigr\ra_{N,\beta}^\alpha-q_P\bigr|^2 \bigr\ra_{N,\beta}=0.$$
\end{remark}
\begin{proof}[\bf Proof of Theorem \ref{lemma4-CLT}] 
	We prove the central limit theorem via an interpolation argument.
	Fix $r\ge 1$. For every integer $1\leq m\leq 2r$ and any $\rho,\tau^m\in\Sigma_{N-1}$, consider the interpolation
	\begin{align*}
		x_{\rho}^m(t) &:= \sqrt{t} S_{\rho}^m + \sqrt{1-t}\sqrt{\zeta'(1)-\zeta'(q_P)}\xi^m,\,\,0\leq t\leq 1,
	\end{align*}
	where the $\xi^m$'s are i.i.d copies of $\xi$ that are also independent of everything else, and $S_{\rho}^m:=  \dot X_{N,\beta}^{\rho}(\tau^m).$ Set
	\begin{align*}
		V(x) &:= U(x) -\E U(\xi\sqrt{\zeta'(1)-\zeta'(q_P)}),\,\,x\in \mathbb{R}.
	\end{align*}
	Recall that for any $\alpha\in\Sigma_N,$ $\rho\in\Sigma_{N-1}$ represents the first $N-1$ spins. For $0\leq t\leq 1,$ define
	\begin{align*}
		\phi_N(t) &= \E \lrgbs{\bbbgbs{\prod_{m=1}^{2r}V(x_{\rho}^m(t))}_{N-1, \beta'}^{\rho}}_{N,\beta}.
	\end{align*}
	Evidently the left-hand side of \eqref{eqn:lemma4} equals $\phi_N(1)$. 
	
	In order to control $\phi_N(1)$, we introduce
	\[
	\psi_N(t):= \E \lrgbs{\cosh(X_{N,\beta}(\rho)+h)\bbbgbs{\prod_{m=1}^{2r}V(x_{\rho}^m(t))}_{N-1, \beta'}^{\rho}}_{N-1,\beta'}.
	\]
where $\rho$ and $(\tau^1,\ldots,\tau^{2r})$ are sampled from the outer and inner Gibbs measures, $\la \cdot\ra_{N-1,\beta'}$ and $\la \cdot\ra_{N-1,\beta'}^\rho$, respectively.
	Observe that $\psi_N(0)=0$. On the other hand, note that
when $t=1,$ $x_\rho^m(1)=\dot X_{N,\beta}^\rho(\tau^m)$ and since $\tau^1,\ldots, \tau^{2r}$ are i.i.d. sampled from $\la \cdot\ra_{N-1,\beta'}^\rho$, it follows that $$\la V(x_\rho^1(1))\ra_{N-1,\beta'}^\rho=\la V(x_\rho^2(1))\ra_{N-1,\beta'}^\rho=\cdots=\la V(x_\rho^{2r}(1))\ra_{N-1,\beta'}^\rho.$$ Consequently,
$$
\Bigl\la\prod_{m=1}^{2r}V(x_\rho^m(1))\Bigr\ra_{N-1,\beta'}^\rho=\Bigl(\la V(x_\rho^1(1))\ra_{N-1,\beta'}^\rho\Bigr)^{2r},
$$
which is a nonnegative.
From this, we can adopt  a change of measure argument as the one in the proof of Lemma \ref{add:lem1} to bound
\begin{align}
\nonumber     \phi_N(1)&=\e\lrgbs{\bbbgbs{ \prod_{m=1}^{2r}V(x_\rho^m(1))}_{N-1,\beta'}^\rho}_{N,\beta}\\ \nonumber&=\e\lrgbs{\left(\bbbgbs{V(x_\rho(1))}_{N-1,\beta'}^\rho\right)^{2r}}_{N,\beta}\\
\nonumber     &\leq \e\lrgbs{\cosh(X_{N,\beta}(\rho)+h)\left(\bbbgbs{V(x_\rho(1))}_{N-1,\beta'}^\rho\right)^{2r}}_{N-1,\beta'}\\
 \label{add:eq---2}    &=\e\lrgbs{\cosh(X_{N,\beta}(\rho)+h)\bbbgbs{\prod_{m=1}^{2r}V(\dot{X}_{N,\beta}^\rho(\tau^m))}_{N-1,\beta'}^\rho}_{N-1,\beta'}=\psi_N(1).
 \end{align}
Therefore, we have
	\begin{align*}
		\phi_N(1)\leq \psi_N(1) = \int_0^1 \psi_N'(t) dt\le \int_0^1 |\psi_N'(t)| dt,
	\end{align*}
	and it suffices to estimate $|\psi'_N(t)|$. 
	To this end, we differentiate $\psi_N$ and use symmetry in $m$ to get
	\begin{align*}
		\psi_N'(t)&=r\e\bgbs{\bgbs{\Phi_{N,t}(\rho,\tau^1,\ldots,\tau^{2r})}_{N-1,\beta'}^\rho}_{N-1,\beta'}\\
		&=r\e\bgbs{\bgbs{\e_{g_{\cdot N,\xi}}\Phi_{N,t}(\rho,\tau^1,\ldots,\tau^{2r})}_{N-1,\beta'}^\rho}_{N-1,\beta'},	
	\end{align*}
where
\begin{align*}
	\Phi_{N,t}(\rho,\tau^1,\ldots,\tau^{2r})&:=\cosh(X_{N,\beta}(\rho)+h)V'(x_\rho^1(t))\bigg(\frac{S_\rho^1}{\sqrt{t}} -\frac{\sqrt{\zeta'(1)-\zeta'(q_P)}}{\sqrt{1-t}}\xi^1\bigg)\prod_{\substack{s=2}}^{2r} V(x_\rho^{s}(t))
\end{align*}
and $\e_{g_{\cdot N},\xi}$ is the expectation with respect to $g_{\cdot N}$ and $\xi^1.$
	Using Gaussian integration by parts with respect to $g_{\cdot N}$ and $\xi^1$ yields that
	\begin{align*}
		&\e_{g_{\cdot N,\xi}}\Phi_{N,t}(\rho,\tau^1,\ldots,\tau^{2r})\\
		&=\E_{g_{\cdot N},\xi}\Big[\!\cosh(X_{N,\beta}(\rho )+h)V''(x_\rho^1(t))\prod_{m=2}^{2r}V(x_\rho^m(t))\Big]\Big(\E_{g_{\cdot N}} \big[S_\rho^1 S_\rho^1\big]-(\zeta'(1)-\zeta'(q_P))\Big)\\
		&\qquad \ +(2r-1)\E_{g_{\cdot N},\xi}\Big[\!\cosh(X_{N,\beta}(\rho )+h) V'(x_\rho^1(t))V'(x_\rho^2(t))\prod_{m=3}^{2r}V(x_\rho^m(t))\Big]\E_{g_{\cdot N}} \big[S_\rho^1S_\rho^2\big]\\
		& \qquad \ + \frac{1}{\sqrt{t}}\E_{g_{\cdot N}, \xi}\Big[\!\sinh(X_{N,\beta}(\rho )+h) V'(x_\rho^1(t))\prod_{m=2}^{2r}V(x_\rho^m(t))\Big]\E_{g_{\cdot N}} \big[ S_\rho^1X_{N,\beta}(\rho )\big],
	\end{align*}
	where the term $2r-1$ arises from the symmetry in $m$.
	Note that $X_{N,\beta}(\rho)+h$ is a mean-$h$ Gaussian random variable, whose variance is uniformly bounded for all $N$ and all $\rho\in \Sigma_{N-1}$, which ensures, for all $k\ge 0$, 
	\begin{align*}
		\max_{\rho\in \Sigma_{N-1}}	\E_{g_{\cdot N}}\big[\sinh^{2k} (X_{N,\beta}(\rho)+h)\big] & < \infty,\\
		\max_{\rho\in\Sigma_{N-1}}	\E_{g_{\cdot N}}\big[\cosh^{2k} (X_{N,\beta}(\rho)+h)\big]& < \infty.
	\end{align*}
	Similarly, $x_\rho^m(t)$ is a mean-zero Gaussian random variable, whose variance is uniformly bounded for all $N$, all $\rho, \tau^m\in \Sigma_{N-1}$, and all $0\le t\le 1$:
	\[
	0\le \text{Var}_{g_{\cdot N,\xi}}(x_\rho^m(t)) = t\text{Var}_{g_{\cdot N}}(S_\rho^m) + (1-t)(\zeta'(1)-\zeta'(q_P))\le 4 \zeta'(1) + (\zeta'(1)-\zeta'(q_P)) <\infty.
	\]
	With the given assumption on $U$, we then have that for any $k, d\ge 0$ and $1\leq m\leq 2r,$
	\begin{align*}
		\sup_{\substack{0\leq t\leq 1,\\\rho,\tau^m\in \Sigma_{N-1}}}\E_{g_{\cdot N},\xi}\big[V^{(d)}(x_\rho^m(t))^{2k}\big]&< \infty. 
	\end{align*}
	Thus, using H\"older's inequality, the derivative $\psi_N'(t)$ is bounded by 
	\begin{align}
		\label{eqn:CLT-3terms:eq1}
		|\psi'_N(t)| &\le C\E \bgbs{\bgbs{\,\big|\E_{g_{\cdot N}}\big[  S_\rho^1  S_\rho^1-(\zeta'(1)-\zeta'(q_P))\big]\big|\, }^{\rho }_{N-1,\beta'}}_{N-1, \beta'} \\
		\label{eqn:CLT-3terms:eq2}&+C\E\bgbs{ \bgbs{\,\big|\E_{g_{\cdot N}}\big[  S_\rho^1  S_\rho^2\big]\big|\, }^{\rho }_{N-1,\beta'}}_{N-1, \beta'} \\
		\label{eqn:CLT-3terms:eq3}
		&+\frac{C}{\sqrt{t}}\E\bgbs{\bgbs{\big|\E_{g_{\cdot N}}\big[  S_\rho^1X_{N, \beta}(\rho )\big]\big|}^{\rho }_{ N-1,\beta'}}_{N-1,\beta'}
	\end{align}
for some constant $C$ independent of $N$ and $t.$
	If we can show that all these expectations vanish as $N\to\infty$ and then $\epsilon \downarrow 0,$ our proof is complete since $1/\sqrt{t}$ is integrable on $(0,1].$ 
	For the remaining of the proof, we handle the three major expectations in the above display as follows.
	
	\smallskip
	
	{\noindent \bf Estimation of \eqref{eqn:CLT-3terms:eq1}:} Write
	\begin{align}
		\notag
		&\ \ \E_{g_{\cdot N}} \big[  S_\rho^1  S_\rho^1-(\zeta'(1)-\zeta'(q_P))\big]\\
		\notag
		&= 
		\sum_{p\ge 2}\frac{\beta_p^2 p!}{N^{p-1}}\sum_{1\le i_1< \cdots < i_{p-1}\le N-1}\bigl(\tau_{i_1}^1\cdots \tau_{i_{p-1}}^1-\gbs{\tau_{i_1}\cdots \tau_{i_{p-1}}}_{N-1,\beta'}^{\rho }\bigr)^2
		- (\zeta'(1)-\zeta'(q_P))\\
		\notag
		&=\sum_{p\ge 2}\beta_p^2 p\bigg[\prod_{l=1}^{p-1}\Bigl(\frac{N-l}{N}\Bigr)-1\bigg] +\bigg[\sum_{p\ge 2}\frac{\beta^2_p p}{N^{p-1}}\!\sum_{\substack{i_1,\ldots, i_{p-1}=1\\\text{distinct}}}^{N-1}\gbs{\tau^1_{i_1}\cdots \tau^1_{i_{p-1}}\tau^2_{i_1}\cdots\tau^2_{i_{p-1}}}_{N-1,\beta'}^{\rho }-\zeta'(q_P)\bigg]\\
		\label{eqn:CLT-3terms-1}
		&\ \ - 2\bigg[\sum_{p\ge 2}\frac{\beta^2_p p}{N^{p-1}}\!\sum_{\substack{i_1,\ldots, i_{p-1}=1\\\text{distinct}}}^{N-1}\tau_{i_1}^1\cdots \tau_{i_{p-1}}^1\gbs{\tau_{i_1}\cdots\tau_{i_{p-1}}}_{N-1,\beta'}^{\rho }-\zeta'(q_P)\bigg].
	\end{align}
	Note that each sum in \eqref{eqn:CLT-3terms-1} is bounded in absolute value by $C_\beta$, introduced at the beginning of Section \ref{intro}. By the dominated convergence theorem, the first sum in \eqref{eqn:CLT-3terms-1} goes to 0 as $N\to\infty$.
	The second sum in \eqref{eqn:CLT-3terms-1} is equal to
	\begin{align*}
		&\sum_{p\ge 2}\left(\frac{N-1}{N}\right)^{p-1}\beta^2_p p\,\bgbs{R^{p-1}(\tau^1, \tau^2)}_{N-1,\beta'}^{\rho }-\zeta'(q_P) +o_N(1)=\bgbs{\zeta'(R_{12})}_{N-1,\beta'}^{\rho } - \zeta'(q_P)+ o_N(1),
	\end{align*}
	where for any $\tau^1,\tau^2\in \Sigma_{N-1},$ we abbreviate $R_{12} = R(\tau^1, \tau^2)$. For the last sum in \eqref{eqn:CLT-3terms-1}, we 
	take the Gibbs average under $\gbs{\cdot}_{N-1, \beta'}^{\rho }$ and use Jensen's inequality to get
	\begin{align*}
		&\ \ \bbgbs{\Big|
			\sum_{p\ge 2}\frac{\beta^2_p p}{N^{p-1}}\sum_{\substack{i_1,\ldots, i_{p-1}=1\\\text{distinct}}}^{N-1}\tau_{i_1}^1\cdots \tau_{i_{p-1}}^1\gbs{\tau_{i_1}^2\cdots\tau_{i_{p-1}}^2}_{N-1,\beta'}^{\rho }-\zeta'(q_P)
			\Big|}^{\rho }_{N-1, \beta'}\\
		&\le \bbbgbs{\Big|
			\sum_{p\ge 2}\frac{\beta^2_p p}{N^{p-1}}\sum_{\substack{i_1,\ldots, i_{p-1}=1\\\text{distinct}}}^{N-1}\tau_{i_1}^1\cdots \tau_{i_{p-1}}^1\tau_{i_1}^2\cdots\tau_{i_{p-1}}^2-\zeta'(q_P)
			\Big|}_{N-1, \beta'}^{\rho }\\
		&=\bgbs{|\,
			\zeta'(R_{12})-\zeta'(q_P)
			|}_{N-1, \beta'}^{\rho } + o_N(1).
	\end{align*}
	Thus, 
	\begin{align}
		\notag
		&\ \ \E \lrgbs{\bgbs{\,\big|\E_{g_{\cdot N }}\big[  S_\rho^1  S_\rho^1-(\zeta'(1)-\zeta'(q_P))\big]\big|\, }_{N-1,\beta'}^{\rho }}_{N-1, \beta'}\\
		\label{eqn:clt-first-term-upbd}
		&\le 3 \E \lrgbs{ \bgbs{\bigl|
				\zeta'(R_{12})-\zeta'(q_P)
				\bigr|}_{N-1, \beta'}^{\rho } }_{N-1, \beta'} + o_N(1).
	\end{align}
	Therefore, from Lemma \ref{concentration} and the fact that for all $x,y\in [0,1]$,
	\begin{align}
		\label{eqn:continuity-of-zetaprime}
		|\zeta'(x)-\zeta'(y)|\le \zeta''(1)|x-y|,
	\end{align}
	the right hand side of \eqref{eqn:clt-first-term-upbd} also vanishes in the limit $N\to\infty$ and then $\epsilon\downarrow 0.$
	
	\smallskip
	
	{\noindent \bf Estimation of \eqref{eqn:CLT-3terms:eq2}:}  With a similar argument as the above control, we see that
	\begin{align*}
		&\bgbs{\big|\E_{g_{\cdot N }}\big[  S_\rho^1  S_\rho^2\big]\big|}^{\rho }_{N-1, \beta'}
		\le 4\bgbs{|\,
			\zeta'(R_{12})-\zeta'(q_P)
			|}^{\rho }_{N-1, \beta'} + o_N(1)
	\end{align*}
and Lemma \ref{concentration} and \eqref{eqn:continuity-of-zetaprime} ensure that \eqref{eqn:CLT-3terms:eq2} vanishes in the limit.
	
	\smallskip
	
	{\noindent \bf Estimation of \eqref{eqn:CLT-3terms:eq3}:} 
	Note that for any $\rho\in\Sigma_{N-1},$
	\begin{align}
		\notag
		\bigl|\E_{g_{\cdot N }}\big[  S_\rho^1X_{N, \beta}(\rho )\big]\bigr|& = \biggl|\sum_{p\ge 2}\frac{\beta_p^2 p}{N^{p-1}}\sum_{\substack{i_1,\ldots, i_{p-1}=1\\ \text{distinct}}}^{N-1}\bigl(\tau_{i_1}^1\cdots\tau_{i_{p-1}}^1- \gbs{\tau_{i_1}^1\cdots\tau_{i_{p-1}}^1}^{\rho }_{N-1, \beta'}\bigr)\rho _{i_1}\cdots\rho _{i_{p-1}}\bigg|
		\notag
	\end{align}
	Again, we can use Jensen's inequality to obtain
	\begin{align*}
		&\e\bigl\la	\bigl\la\bigl|\E_{g_{\cdot N }}\big[  S_\rho^1X_{N, \beta}(\rho )\big]\bigr| \bigr\ra_{N-1,\beta'}^\rho\big\ra_{N-1,\beta'}
		\leq
		\e\bigl\la\bigl\la \bigl|\zeta'(R(\tau^1,\rho))-\zeta'(R(\tau^1,\rho))\bigr|\bigr\ra_{N-1,\beta'}^{\rho}\big\ra_{N-1,\beta'}+o_N(1).
	\end{align*}
	From Lemma \ref{concentration},
	this bound vanishes in the limit.
\end{proof}

\subsection{Proof of Theorem \ref{thm:cavity}}

We begin by establishing the following proposition.

\begin{prop} \label{thm3} 
	\begin{align*}
		\lim_{\epsilon \downarrow 0}\limsup_{N\to\infty}\E \lrgbs{\big[\gbs{\sigma_N}_{N, \beta}^{\alpha}-\tanh \big( \gbs{X_{N,\beta}(\tau)}_{N-1, \beta'}^{\rho}+h\big)  \big]^2}_{N,\beta} = 0,
	\end{align*}
	where $\epsilon \downarrow 0$ along a sequence such that $q_P-\epsilon$ is always a point of continuity for $\mu_P$.
\end{prop}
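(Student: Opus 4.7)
The plan is to combine Proposition \ref{thm1}, which already reduces $\gbs{\sigma_N}_{N,\beta}^{\alpha}$ to the ratio of hyperbolic Gibbs averages, with the central limit theorem stated in Theorem \ref{lemma4-CLT}. By the triangle inequality and Proposition \ref{thm1}, it suffices to show
\begin{align*}
\lim_{\epsilon \downarrow 0}\limsup_{N\to\infty}\E \lrgbs{\Bigl[\frac{\gbs{\sinh(X_{N,\beta}(\tau)+h)}_{N-1,\beta'}^{\rho}}{\gbs{\cosh(X_{N,\beta}(\tau)+h)}_{N-1,\beta'}^{\rho}} - \tanh\!\bigl(\gbs{X_{N,\beta}(\tau)}_{N-1,\beta'}^{\rho}+h\bigr)\Bigr]^{2}}_{N,\beta}=0.
\end{align*}

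The central algebraic step is to decompose $X_{N,\beta}(\tau)+h = \dot X_{N,\beta}^{\rho}(\tau) + Y_{\rho}$, where $Y_{\rho}:= \gbs{X_{N,\beta}(\tau)}_{N-1,\beta'}^{\rho}+h$ does not depend on $\tau$. Using the addition formulas for $\sinh$ and $\cosh$ and pulling $\cosh(Y_{\rho})$ and $\sinh(Y_{\rho})$ out of the Gibbs average $\la\cdot\ra_{N-1,\beta'}^{\rho}$ gives
\begin{align*}
\gbs{\sinh(X_{N,\beta}(\tau)+h)}_{N-1,\beta'}^{\rho} &= A_{\rho}\cosh(Y_{\rho}) + B_{\rho}\sinh(Y_{\rho}),\\
\gbs{\cosh(X_{N,\beta}(\tau)+h)}_{N-1,\beta'}^{\rho} &= A_{\rho}\sinh(Y_{\rho}) + B_{\rho}\cosh(Y_{\rho}),
\end{align*}
with $A_{\rho}:=\gbs{\sinh(\dot X_{N,\beta}^{\rho}(\tau))}_{N-1,\beta'}^{\rho}$ and $B_{\rho}:=\gbs{\cosh(\dot X_{N,\beta}^{\rho}(\tau))}_{N-1,\beta'}^{\rho}$. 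A short manipulation using $\cosh^{2}-\sinh^{2}=1$ then yields the clean identity
\begin{align*}
\frac{\gbs{\sinh(X_{N,\beta}(\tau)+h)}_{N-1,\beta'}^{\rho}}{\gbs{\cosh(X_{N,\beta}(\tau)+h)}_{N-1,\beta'}^{\rho}}-\tanh(Y_{\rho}) = \frac{A_{\rho}}{\cosh(Y_{\rho})\,\gbs{\cosh(X_{N,\beta}(\tau)+h)}_{N-1,\beta'}^{\rho}}.
\end{align*}
Since $\cosh\geq 1$ and $\la\cdot\ra_{N-1,\beta'}^{\rho}$ is a probability measure, both $\cosh(Y_{\rho})\geq 1$ and $\gbs{\cosh(X_{N,\beta}(\tau)+h)}_{N-1,\beta'}^{\rho}\geq 1$. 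Hence the absolute value of the left-hand side is bounded above by $|A_{\rho}|$, and the problem reduces to showing $\lim_{\epsilon\downarrow 0}\limsup_{N\to\infty}\E\bgbs{A_{\rho}^{2}}_{N,\beta}=0$.

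This last step is immediate from Theorem \ref{lemma4-CLT} applied with $U=\sinh$ and $r=1$: $\sinh$ is infinitely differentiable and its derivatives are either $\sinh$ or $\cosh$, so the required moment condition on $U^{(d)}(\gamma z)$ is satisfied; and by symmetry of $\xi\sim N(0,1)$ we have $\E\sinh\!\bigl(\xi\sqrt{\zeta'(1)-\zeta'(q_{P})}\bigr)=0$, so the CLT gives exactly $\E\bgbs{A_{\rho}^{2}}_{N,\beta}\to 0$.

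The most delicate step in this argument is not the calculation above but Theorem \ref{lemma4-CLT} itself, whose proof in the previous subsection is where ultrametricity and the within-pure-state overlap concentration (Lemma \ref{concentration}) do all the work. Once that is in hand, the additional input needed here is the observation that one never has to threshold on the event $\{W_{N-1,\beta'}(\Sigma_{N-1}^{\rho})\geq\delta\}$ as was required in Proposition \ref{thm1}, because the inequality $\gbs{\cosh(X_{N,\beta}(\tau)+h)}_{N-1,\beta'}^{\rho}\geq 1$ already gives a deterministic denominator control.
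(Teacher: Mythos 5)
Your proposal is correct, and it reaches the conclusion by a genuinely different (and in fact shorter) route than the paper. The paper's proof of Proposition \ref{thm3} bounds the difference of the two ratios by differences of exponential averages and then shows, for $\varepsilon=\pm 1$, that $\gbs{e^{\varepsilon X_{N,\beta}(\tau)}}_{N-1,\beta'}^{\rho}$ is close in $L^2$ to $\exp\{\varepsilon\gbs{X_{N,\beta}(\tau)}_{N-1,\beta'}^{\rho}+\tfrac12(\zeta'(1)-\zeta'(q_P))\}$; this forces an application of Theorem \ref{lemma4-CLT} with $U(x)=e^{\varepsilon x}$ at the fourth moment, followed by Cauchy--Schwarz and a change-of-measure estimate to control $\E\bgbs{e^{4\varepsilon\gbs{X_{N,\beta}(\tau)}_{N-1,\beta'}^{\rho}}}_{N,\beta}$. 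Your hyperbolic addition-formula identity replaces all of that: since $Y_{\rho}$ is constant under $\la\cdot\ra_{N-1,\beta'}^{\rho}$, the identity is exact (I checked the algebra: the numerator of the difference collapses to $A_{\rho}(\cosh^2 Y_\rho-\sinh^2 Y_\rho)=A_{\rho}$), the denominator is bounded below by $1$ deterministically because $\cosh\ge 1$ and $\la\cdot\ra_{N-1,\beta'}^{\rho}$ is a probability measure, and so a single application of Theorem \ref{lemma4-CLT} with $U=\sinh$, $r=1$ (whose hypotheses $\sinh$ clearly satisfies, and $\E\sinh(\xi\sqrt{\zeta'(1)-\zeta'(q_P)})=0$ by symmetry) finishes the proof; the Gaussian factor $e^{(\zeta'(1)-\zeta'(q_P))/2}$ never needs to appear because it cancels in the exact identity. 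What the paper's route buys in exchange for the extra work is the explicit exponential approximations, which are of the same shape as the estimates reused in the multivariate argument of Section \ref{sec:cavitytoTAP2}; your route buys economy: second moments instead of fourth, and no exponential-moment or change-of-measure control on $\gbs{X_{N,\beta}(\tau)}_{N-1,\beta'}^{\rho}$ under the outer measure. Both arguments place the entire analytic burden on Theorem \ref{lemma4-CLT}, i.e.\ on the overlap concentration of Lemma \ref{concentration}, exactly as you observe.
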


\begin{proof}
	By Proposition \ref{thm1}, it is sufficient to show that
	\begin{align}\label{add:eq---1}
		\lim_{\epsilon \downarrow 0}\limsup_{N\to\infty}\E \bbbgbs{\bigg[\frac{ \gbs{\sinh(X_{N,\beta}(\btau)+h)}_{N-1,\beta'}^{\rho } }{\gbs{\cosh(X_{N,\beta}(\btau)+h)}_{N-1,\beta'}^{\rho }}-\tanh \big( 
			\gbs{X_{N,\beta}(\tau)}_{N-1, \beta'}^{\rho }+h\big)  \bigg]^2}_{N,\beta} = 0.
	\end{align}
	To this end, we claim that  for $\varepsilon\in\{-1,1\},$
	\begin{align*}
		\lim_{\epsilon\downarrow 0}\limsup_{N\to\infty} \E\lrgbs{ \Big[\bgbs{e^{\varepsilon
					X_{N,\beta}(\tau)
			}}^{\rho }_{ N-1, \beta'} -  e^{\varepsilon \la X_{N,\beta}(\tau)\ra^{\rho }_{N-1,\beta'}+ \frac{1}{2}(\zeta'(1)-\zeta'(q_P))
			}\Big]^2}_{N,\beta}=0.
	\end{align*}
 If this is valid, then multiplying $e^{\varepsilon h}$ to this equation leads to
 \begin{align*}
		\lim_{\epsilon\downarrow 0}\limsup_{N\to\infty} \E\lrgbs{ \Big[\bgbs{e^{\varepsilon
					(X_{N,\beta}(\tau)
			+h)}}^{\rho }_{ N-1, \beta'} -  e^{\varepsilon (\la X_{N,\beta}(\tau)\ra^{\rho }_{N-1,\beta'}+h)+ \frac{1}{2}(\zeta'(1)-\zeta'(q_P))
			}\Big]^2}_{N,\beta}=0.
	\end{align*}
Noting that
	\[
	\tanh(x)=\frac{\sinh(x)}{\cosh(x)}=\frac{e^x-e^{-x}}{e^x+e^{-x}},\ \ |\sinh(x)|\le \cosh(x), \ \text{and } \cosh(x)\ge 1,
	\]
 we can use the previous limit to recover \eqref{add:eq---1}; along the way, since $(\zeta'(1)-\zeta'(q_P))/2$ is a constant term, it will be cancelled and does not appear in \eqref{add:eq---1}.

	The proof of our claim proceeds as follows. Note that Theorem \ref{lemma4-CLT} with $U(x)=e^{\varepsilon x}$
	for $\varepsilon \in \{+1, -1\}$ readily implies that
	\begin{align}
		\label{eqn:approx-eterm}
		\lim_{\epsilon \downarrow 0}
		\limsup_{N\to\infty}\E \lrgbs{ \Big[\bgbs{e^{\varepsilon
					\dot X_{N,\beta}^\rho(\tau)
			}}_{N-1, \beta'}^{\rho } - \exp\Big\{\frac{1}{2}\big[\zeta'(1)-\zeta'(q_P)\big]
			\Big\}\Big]^4}_{N,\beta}=0.
	\end{align}
	For any $\rho \in \Sigma_{N-1}$, we can decompose
	$
	X_{N,\beta}(\tau) = \dot X_{N,\beta}^\rho(\tau) + \gbs{X_{N,\beta}(\tau)}^{\rho }_{N-1,\beta'}
	$ to get
	\begin{align*}
		&\ \ \E\lrgbs{ \Big[\bgbs{e^{\varepsilon
					X_{N,\beta}(\tau)
			}}^{\rho }_{ N-1, \beta'} -  \exp\Big\{\varepsilon \bgbs{X_{N,\beta}(\tau)}^{\rho }_{N-1,\beta'}+ \frac{1}{2}\big[\zeta'(1)-\zeta'(q_P)\big]
			\Big\}\Big]^2}_{N,\beta}\\
		&=\E \lrgbs{ e^{2\varepsilon \gbs{X_{N,\beta}(\tau)}^{\rho }_{N-1,\beta'}} \Big[\bgbs{e^{\varepsilon
					\dot X_{N,\beta}( \tau)
			}}^{\rho }_{N-1, \beta'} -  \exp\Big\{ \frac{1}{2}\big[\zeta'(1)-\zeta'(q_P)\big]
			\Big\}\Big]^2}_{N,\beta}\\
		&\le \bigg\{\E \bgbs{ e^{4\varepsilon \gbs{X_{N,\beta}(\tau)}^{\rho }_{ N-1,\beta'}}}_{N,\beta}\E \lrgbs{\Big[\bgbs{e^{\varepsilon
					\dot X_{N,\beta}( \tau)
			}}^{\rho }_{N-1, \beta'} -  \exp\Big\{ \frac{1}{2}\big[\zeta'(1)-\zeta'(q_P)\big]
		\Big\}\Big]^4}_{N,\beta}\bigg\}^{1/2}.
	\end{align*}
	Here the second term vanishes as ensured by \eqref{eqn:approx-eterm}. By a change of measure as in the proof of 
	Lemma \ref{add:lem1}, the first expectation is bounded by
	\begin{align*}
		&\E \bigg[\sum_{\rho \in \Sigma_{N-1}} G_{N-1,\beta'}(\rho )\E_{g_{\cdot N }}\Bigl[\cosh(X_{N,\beta}(\rho)+h) e^{4\varepsilon \gbs{X_{N,\beta}(\tau)}^{\rho }_{N-1,\beta'}}\Bigr]\bigg]\\
		&\le \E \sum_{\rho \in \Sigma_{N-1}} G_{N-1,\beta'}(\rho )\left\{\E_{g_{\cdot N }}[\cosh^2(X_{N,\beta}(\rho)+h)]\,\E_{g_{\cdot N }} [e^{8\varepsilon \gbs{X_{N,\beta}(\tau)}^{\rho }_{ N-1,\beta'}}]\right\}^{1/2} \leq C
	\end{align*}
	for some constant $C>0$ independent of $N$ and $\epsilon.$ This completes the proof of our claim.
\end{proof}

We now turn to the proof of Theorem \ref{thm:cavity}.  From Proposition \ref{thm3} and noting that $|\tanh(x)-\tanh(x')|^{2}\leq 2|x-x'|$, it remains to show that
\begin{align}
	\label{eqn:last-step}
	\lim_{\epsilon\downarrow 0}\limsup_{N\to\infty}\E \lrgbs{\Big|X_{N,\beta}(\gbs{\tau}_{N-1,\beta'}^{\rho }) -  \bgbs{X_{N,\beta}(\tau)}_{N-1, \beta'}^{\rho } \Big|}_{N,\beta}= 0.
\end{align}
As in the proof of Lemma \ref{add:lem1}, we apply a change of measure to deal with the dependence of $\gbs{\cdot}_{N,\beta}$ on the disorders $g_{\cdot N }$,
\begin{align*}
	&\ \ \E \lrgbs{\Big |X_{N,\beta}\bigl(\bgbs {\tau}_{N-1,\beta'}^{\rho }\bigr) -  \bgbs {X_{N,\beta}(\tau)}_{N-1, \beta'}^{\rho } \Big |}_{N,\beta}\\
	&\le \E \lrgbs{\cosh(X_{N,\beta}(\rho )+h)\Big |X_{N,\beta}\bigl(\bgbs {\tau}_{N-1,\beta'}^{\rho }\bigr) -  \bgbs {X_{N,\beta}(\tau)}_{N-1, \beta'}^{\rho } \Big |}_{N-1,\beta'}\\
	&\le \E \bigg[\lrgbs{\Big [\E_{g_{\cdot N  }}\cosh^{2}(X_{N,\beta}(\rho )+h)\Big ]^{1/2}\Big \{\E_{g_{\cdot N  }}\Big [X_{N,\beta}\bigl(\bgbs {\tau}_{N-1,\beta'}^{\rho }\bigr) -  \bgbs {X_{N,\beta}(\tau)}_{N-1, \beta'}^{\rho } \Big ]^{2}\Big \}^{1/2}}_{N-1,\beta'}\bigg].
\end{align*}
Here, uniformly in $\rho\in\Sigma_{N-1},$
\begin{align*}
	\E_{g_{\cdot N  }}\cosh^{2}(X_{N,\beta}(\rho )+h)\leq e^{2C_\beta}\cosh^2 (h)
\end{align*}
and 
\begin{align*}
	&\E_{g_{\cdot N  }}\Big [X_{N,\beta}\bigl(\bgbs {\tau}_{N-1,\beta'}^{\rho }\bigr) -  \bgbs {X_{N,\beta}(\tau)}_{N-1, \beta'}^{\rho } \Big ]^{2}\\
	&\leq \sum_{p\ge 2}\frac{\beta_{p}^{2}p}{N^{p-1}}\sum_{\substack{i_{1},\ldots,i_{p-1}\\\text{distinct}}}\bigg(\bgbs {\tau_{i_{1}}}_{N-1,\beta'}^{\rho }\cdots \bgbs {\tau_{i_{p-1}}}_{N-1,\beta'}^{\rho } -\bgbs {\tau_{i_{1}}\cdots\tau_{i_{p-1}}}_{N-1,\beta'}^{\rho } \bigg)^{2}\\
	&=\zeta'\bigl(\bgbs {R_{1,2}}_{N-1,\beta'}^{\rho }\bigr)+\bgbs {\zeta'(R_{1,2})}_{N-1,\beta'}^{\rho } - 2\sum_{p\ge 2}\beta_{p}^{2}p\bgbs {R_{1,p}R_{2,p}\cdots R_{p-1,p}}_{N-1,\beta'}^{\rho }+o_{N}(1).
\end{align*}
where we write $R_{\ell,\ell'}:=R(\tau^\ell,\tau^{\ell'}).$ From Lemma \ref{concentration} and \eqref{eqn:continuity-of-zetaprime}, we readily see that
\begin{align*}
	\lim_{\epsilon\downarrow 0}	\limsup_{N\to\infty}\e\bigl\la \bigl|\zeta'\bigl(\bgbs {R_{1,2}}_{N-1,\beta'}^{\rho }\bigr)-\zeta'(q_P)\bigr|\bigr\ra_{N-1,\beta'}=0,\\
	\lim_{\epsilon\downarrow 0}	\limsup_{N\to\infty}\e\bigl\la \bigl|\bgbs {\zeta'\bigl(R_{1,2})}_{N-1,\beta'}^{\rho }-\zeta'(q_P)\bigr|\bigr\ra_{N-1,\beta'}=0.
\end{align*}
Finally, note that
\begin{align*}
	\bigl|R_{1,p}R_{2,p}\cdots R_{p-1,p}-q_P^{p-1}\bigr|&\leq \sum_{l=1}^{p-1}|R_{l,p}-q_P|.
\end{align*}
It follows from Lemma \ref{concentration} and the symmetry in $l$ that
\begin{align*}
	&\e\Bigl\la \Bigl|\sum_{p\ge 2}\beta_{p}^{2}p\bigl(\bgbs {R_{1,p}R_{2,p}\cdots R_{p-1,p}}_{N-1,\beta'}^{\rho }-q_P^{p-1}\bigr)\Bigr|\Bigr\ra_{N-1,\beta'}\\
	&\leq \bigg(\sum_{p\ge 2}\beta_{p}^{2}p^2\bigg)\e\bigl\la \bigl\la\bigl|R_{1,2}-q_P\bigr|\bigr\ra_{N-1,\beta'}^\rho\bigr\ra_{N-1,\beta'}\to 0,
\end{align*}
as we send $N\to\infty$ and $\epsilon \downarrow 0$. Combining these together validates \eqref{eqn:last-step} and completes our proof.

\section{\label{sec:cavitytoTAP1}Cavity computation for $X_{N,\beta}(\la \sigma\ra_{N,\beta}^\alpha)$}
To derive the TAP equation from the cavity equation, the next step is to represent  $X_{N,\beta}(\gbs{\tau}_{N-1, \beta'}^{\rho})$ in Theorem \ref{thm:cavity} as the difference between $X_{N,\beta}(\gbs{\sigma}^{\alpha}_{N, \beta})$ and the Onsager correction term $\zeta''(q_P)(1-q_P)\la \sigma\ra_{N,\beta}^\alpha$.
To this end, in this section, we will first perform a cavity computation for $X_{N,\beta}(\gbs{\sigma}^{\alpha}_{N, \beta})$, like we did in Proposition \ref{thm1} for $\la \sigma_N\ra_{N,\beta}^\alpha.$

First of all, write
\begin{align*}
	X_{N,\beta}(\gbs{\sigma}_{N,\beta}^\alpha) = \sum_{p\ge 2}\frac{\beta_p \sqrt{ p!}}{N^{\frac{p-1}{2}}}\sum_{1\le i_1<\cdots< i_{p-1}\le N-1}g_{i_1,\ldots, i_{p-1},N}\gbs{\sigma_{i_1}}_{N,\beta}^{\alpha}\cdots \gbs{\sigma_{i_{p-1}}}_{N,\beta}^\alpha.
\end{align*}
For each local magnetization on the right hand side, we can follow the same procedure in Section \ref{sec:cavity1} and write $\sigma$ as $\sigma = (\tau, \sigma_N)$ to express
\[
\gbs{\sigma_j}_{N,\beta}^\alpha = \frac{\bgbs{\tau_j e^{X_{N,\beta}(\tau)+h}\mathbbm 1_{\Sigma_N^{\alpha}}((\tau,1))+{\tau_j}e^{-X_{N,\beta}(\tau)-h}\mathbbm 1_{\Sigma_N^{\alpha}}((\tau,-1))}_{N-1,\beta'}}{\bgbs{e^{X_{N,\beta}(\tau)+h}\mathbbm 1_{\Sigma_N^{\alpha}}((\tau,1))+e^{-X_{N,\beta}(\tau)-h}\mathbbm 1_{\Sigma_N^{\alpha}}((\tau,-1))}_{N-1,\beta'}}
\]
for $1\leq j\leq N-1.$
Due to the nested structure \eqref{eqn:nestedsets} and Lemma \ref{lem:sandwich}, one is tempted to approximate all indicators above by $\mathbbm 1_{\Sigma_{N-1}^{\rho}}(\tau)$ for $\alpha=(\rho, \alpha_N)$ and write that
\begin{align}\label{add:eq12}
\gbs{\sigma_j}_{N,\beta}^\alpha \approx s^{\rho}_j:= \frac{\bgbs{\tau_j\cosh(X_{N,\beta}(\tau)+h)}_{N-1,\beta'}^{\rho}}{\bgbs{\cosh(X_{N,\beta}(\tau)+h)}_{N-1,\beta'}^{\rho}}.
\end{align}
for each $j = 1,2,\ldots, N-1$. Write $s^{\rho} = (s_1^{\rho}, \ldots, s_{N-1}^{\rho})$. The main goal in this section is to establish this approximation and prove the following proposition.
\begin{prop}\label{thm:gbs-cavity-approx}
For any $\epsilon$ such that $q_P-\epsilon$ is a point of continuity of $\mu_P,$ we have that
	\begin{align}
		\label{eqn:gbs-cavity-approx}
		\lim_{N\to\infty}\E \bgbs{\big[X_{N,\beta}(\gbs{\sigma}_{N,\beta}^\alpha)-X_{N,\beta}(s^{\rho})\big]^2}_{N,\beta} = 0.
	\end{align}
\end{prop}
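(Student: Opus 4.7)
The plan is to combine a coordinate-wise cavity approximation $\gbs{\sigma_j}^\alpha_{N,\beta}\approx s^\rho_j$ (for $1\le j\le N-1$, analogous to Proposition~\ref{thm1}) with a moment estimate for the Gaussian polynomial $X_{N,\beta}$. First, I will establish the coordinate-wise approximation by running the proof of Proposition~\ref{thm1} essentially verbatim but with $\sigma_N$ replaced by $\sigma_j$; then I will transfer this approximation to the polynomial level via truncation in $p$ and bounds on the second moment of $\sum_{\mathbf i} g_{\mathbf i,N}D_{p,\mathbf i}$.

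\textbf{Coordinate-wise step.} For $1\le j\le N-1$, writing $\sigma=(\tau,\sigma_N)$ and $\sigma_j=\tau_j$, one obtains
\[
\gbs{\sigma_j}^\alpha_{N,\beta}=\frac{\bgbs{\tau_j\bigl[e^{X_{N,\beta}(\tau)+h}\mathbbm 1_{\Sigma_N^\alpha}((\tau,1))+e^{-X_{N,\beta}(\tau)-h}\mathbbm 1_{\Sigma_N^\alpha}((\tau,-1))\bigr]}_{N-1,\beta'}}{\bgbs{e^{X_{N,\beta}(\tau)+h}\mathbbm 1_{\Sigma_N^\alpha}((\tau,1))+e^{-X_{N,\beta}(\tau)-h}\mathbbm 1_{\Sigma_N^\alpha}((\tau,-1))}_{N-1,\beta'}}.
\]
This differs from~\eqref{eqn:cavity1} only by the extra factor $|\tau_j|\le 1$ in the numerator, so Lemma~\ref{lem:sandwich} still applies term-by-term and the argument of Proposition~\ref{thm1} carries over to give
\[
\lim_{\epsilon\downarrow 0}\limsup_{N\to\infty}\E \bgbs{(\gbs{\sigma_j}^\alpha_{N,\beta}-s^\rho_j)^2}_{N,\beta}=0,
\]
with the limit uniform in $1\le j\le N-1$ by the spin-site symmetry.

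\textbf{Lifting to $X_{N,\beta}$.} Set $D_{p,\mathbf i}:=\gbs{\sigma_{i_1}}^\alpha\cdots\gbs{\sigma_{i_{p-1}}}^\alpha-s^\rho_{i_1}\cdots s^\rho_{i_{p-1}}$. Telescoping together with $|\gbs{\sigma_i}^\alpha|,|s^\rho_i|\le 1$ yields $|D_{p,\mathbf i}|\le 2$ and $|D_{p,\mathbf i}|\le\sum_{l=1}^{p-1}|\gbs{\sigma_{i_l}}^\alpha-s^\rho_{i_l}|$. I would truncate $X_{N,\beta}=X_{N,\beta}^{\le p_0}+X_{N,\beta}^{>p_0}$ and apply the change-of-measure of Lemma~\ref{add:lem1} to pass from $\la\cdot\ra_{N,\beta}$ to $\la\cdot\ra_{N-1,\beta'}$ at the cost of a $\cosh(X_{N,\beta}(\tau)+h)$ weight, which has bounded exponential moments in $g_{\cdot N}$ by~\eqref{add:eq4}. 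The tail $p>p_0$ is then controlled by the crude bound $|D_{p,\mathbf i}|\le 2$ and the Gaussian second-moment formula, giving a bound of order $\sum_{p>p_0}p\beta_p^2$ which tends to $0$ as $p_0\to\infty$ by $C_\beta<\infty$. For each $p\le p_0$, the same procedure combined with the telescoping bound, the counting $\#\{\mathbf i:i_l=i\}=\binom{N-2}{p-2}$, and the spin-site symmetry reduces the $p$-th contribution to a multiple of $\beta_p^2 p^4\,\E\bgbs{(\gbs{\sigma_1}^\alpha-s^\rho_1)^2}_{N,\beta}$, which vanishes by the coordinate-wise step.

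\textbf{Main obstacle.} The chief difficulty is that both $\gbs{\sigma}^\alpha$ and $s^\rho$ depend on the cavity disorder $g_{\cdot N}$ through the Gibbs weights, so one cannot directly compute the Gaussian second moment of $X_{N,\beta}(\gbs{\sigma}^\alpha)-X_{N,\beta}(s^\rho)$ by conditioning on everything else. The change of measure in Lemma~\ref{add:lem1} together with Gaussian integration by parts decouples the Gibbs weights from the Gaussian coefficients; the off-diagonal cross terms $\E[g_{\mathbf i,N}g_{\mathbf j,N}D_{p,\mathbf i}D_{q,\mathbf j}]$ for $(p,\mathbf i)\ne(q,\mathbf j)$ produce derivatives $\partial_{g_{\cdot N}}\gbs{\sigma}^\alpha$ and $\partial_{g_{\cdot N}}s^\rho$ that must be bounded using the cavity/replica representation of these quantities, which is the most delicate part of the bookkeeping.
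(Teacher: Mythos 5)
Your coordinate-wise step is fine: the proof of Proposition \ref{thm1} does carry over with $\sigma_j=\tau_j$ inserted in the numerator, since $|\tau_j|\le 1$ preserves the bound $|A/B|\le 1$, the lower bound $B\ge W_{N-1,\beta'}(A_\ominus^{\rho})$, and the applicability of Lemma \ref{lem:sandwich}, and exchangeability of the sites $1,\dots,N-1$ gives uniformity in $j$. The genuine gap is in the lifting step. Because $\gbs{\sigma}_{N,\beta}^{\alpha}$, $s^{\rho}$ \emph{and} the outer bracket $\gbs{\cdot}_{N,\beta}$ all depend on $g_{\cdot N}$, the ``Gaussian second-moment formula'' is unavailable, and after Gaussian integration by parts (writing $\Delta_p=\beta_p\sqrt{p!}\,N^{-(p-1)/2}$) the $p$-th contribution is the diagonal term $\Delta_p^2\sum_{\mathbf i}\E\bgbs{D_{p,\mathbf i}^2}_{N,\beta}$ — which indeed vanishes by your coordinate-wise step — \emph{plus} the cross terms $\Delta_p^2\sum_{\mathbf i,\mathbf j}\E\bigl[\partial_{g_{\mathbf j,N}}\partial_{g_{\mathbf i,N}}\bgbs{D_{p,\mathbf i}D_{p,\mathbf j}}_{N,\beta}\bigr]$. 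In the pieces where one derivative hits $D_{p,\mathbf i}$ and the other hits $D_{p,\mathbf j}$, no undifferentiated small factor survives: each such piece is only $O(\Delta_p^2)$ in size, and summing over the $\binom{N-1}{p-1}^2$ pairs and multiplying by the prefactor $\Delta_p^2$ yields a contribution of order $\beta_p^4p^4$ — bounded, but not $o_N(1)$. So ``bounding the derivatives,'' which is all your obstacle paragraph proposes, cannot close the argument; you would additionally have to show that $\partial_{g_{\mathbf j,N}}\gbs{\sigma_i}_{N,\beta}^{\alpha}$ and $\partial_{g_{\mathbf j,N}}s_i^{\rho}$ are asymptotically close in an averaged sense. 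These derivatives are $\Delta_p$ times truncated multi-spin correlations under the two constrained measures, so this amounts to a whole family of statements of the same type as your coordinate-wise step but for $p$-point functions involving the cavity spin, and none of it is sketched. (The $p>p_0$ tail has the same defect as stated, but there it is repairable: bound $\E\bgbs{(\sum_{p>p_0}A_p^{\alpha})^2}_{N,\beta}$ and $\E\bgbs{(\sum_{p>p_0}B_p^{\rho})^2}_{N,\beta}$ separately via integration by parts, where mere boundedness, summable in $p$, suffices — this is the content of Lemma \ref{lemma:Xn2bd} and Proposition \ref{Coro:tailofXn}.)

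For comparison, the paper never establishes the coordinate-wise $L^2$ closeness and never decouples the Gaussian coefficients from the magnetizations. It writes $A_p^{\alpha}=\beta_pC_p^{\alpha}/D_p^{\alpha}$ and $B_p^{\rho}=\beta_pC_p^{\rho}/D_p^{\rho}$ through a replica representation in which the Gaussian coefficients remain inside the Gibbs bracket as the single field $Z_{N,p}(\boldsymbol{\tau})$; the two ratios then differ only through the indicators $\mathbbm 1_{\Sigma_N^{\alpha}}$ versus $\mathbbm 1_{\Sigma_{N-1}^{\rho}}$, so Lemma \ref{lemma:CpDp-diff} (a sandwich estimate in the spirit of Lemma \ref{lem:sandwich}, after a change of measure) bounds $C_p^{\alpha}-C_p^{\rho}$ and $D_p^{\alpha}-D_p^{\rho}$ by powers of $\eta_N$, the Gibbs weight of the annulus $A_\oplus^{\rho}\setminus A_\ominus^{\rho}$, which vanishes by \eqref{add:lem2:eq1}; the denominators are bounded below by $\delta^{p}$ on $\{W_{N-1,\beta'}(A_\ominus^{\rho})\ge\delta\}$, and the complementary event together with the truncation tail is absorbed by the fourth-moment bounds of Lemma \ref{lemma:Xn2bd} (Propositions \ref{Coro:tailofXn} and \ref{Coro:Xn-Negligible}). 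That design gets its smallness from the vanishing annulus weight rather than from a site-wise $L^2$ estimate, and thereby avoids exactly the cross-term problem where your route gets stuck.
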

We will prove \eqref{eqn:gbs-cavity-approx} in two steps. Firstly, we show that $X_{N,\beta}(\gbs{\sigma}_{N,\beta}^{\alpha})$ and $X_{N,\beta}(s^{\rho})$ will not differ too much if we truncate the infinite sum over $p\ge 2$ to a finite sum over $2\le p \le p_0$, provided that $p_0$ is sufficiently large. This will require some moment and truncation controls on $X_{N,\beta}(\gbs{\sigma}_{N,\beta}^{\alpha})$ and $X_{N,\beta}(s^{\rho})$ that we establish in the next subsection.

\subsection{Moment and truncation controls of $X_{N,\beta}(\gbs{\sigma}_{N,\beta}^{\alpha})$ and $X_{N,\beta}(s^{\rho})$}

Write  $X_{N,\beta}(\gbs{\sigma}_{N}^{\alpha}) = \sum_{p\ge 2}A^{\alpha}_{p}$ and $X_{N,\beta}(s^{\rho}) = \sum_{p\ge 2}B^{\rho}_{p}$, where 
\begin{align}
	\label{eqn:defAp}
	A^{\alpha}_{p} &:= \frac{\beta_p \sqrt{ p!}}{N^{\frac{p-1}{2}}}\sum_{1\le i_1<\cdots <i_{p-1}\le N-1}g_{i_1,\ldots, i_{p-1},N}\gbs{\sigma_{i_1}}_{N,\beta}^{\alpha}\cdots \gbs{\sigma_{i_{p-1}}}_{N,\beta}^\alpha,\\
	\label{eqn:defBp}
	B^{\rho}_{p} &:= \frac{\beta_p \sqrt{ p!}}{N^{\frac{p-1}{2}}}\sum_{1\le i_1<\cdots <i_{p-1}\le N-1}g_{i_1,\ldots, i_{p-1},N}s_{i_1}^{\rho}\cdots s_{i_{p-1}}^{\rho}.
\end{align}
The following proposition provides a control on the truncation of the above two random series. 

\begin{prop}
	\label{Coro:tailofXn}
	For any $\delta >0$, there exists $p_0>0$ such that for all $N\ge 1$ and small enough $\epsilon>0,$
	\begin{align*}
		\E \bbbgbs{\Big[ \sum_{p>p_0}A_p^\alpha\Big]^{4}}_{N, \beta} &\le \delta\quad\mbox{and}\quad	\E \bbbgbs{\Big[\sum_{p>p_0}B_p^{\rho}\Big]^{4}}_{N, \beta} \le \delta.
	\end{align*}
\end{prop}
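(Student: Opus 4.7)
The plan is to derive uniform-in-$N$ fourth-moment bounds $\mathbb{E}\langle|A_p^\alpha|^4\rangle_{N,\beta}\leq c_p$ and $\mathbb{E}\langle|B_p^\rho|^4\rangle_{N,\beta}\leq c_p$ with $\sum_p c_p^{1/4}<\infty$, and then to apply Minkowski's inequality in $L^4(\mathbb{E}\langle\cdot\rangle_{N,\beta})$:
\[
\Bigl\|\sum_{p>p_0}A_p^\alpha\Bigr\|_{L^4}\leq\sum_{p>p_0}\|A_p^\alpha\|_{L^4}\leq\sum_{p>p_0}c_p^{1/4},
\]
which tends to $0$ as $p_0\to\infty$. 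Summability is afforded by the hypothesis $C_\beta=\sum_p 2^p\beta_p^2<\infty$, which forces $\beta_p=o(2^{-p/2})$ and dominates any polynomial-in-$p$ growth in $c_p^{1/4}$.

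For $A_p^\alpha$, I would exploit the replica identity
\[
\langle\sigma_{i_1}\rangle^\alpha_{N,\beta}\cdots\langle\sigma_{i_{p-1}}\rangle^\alpha_{N,\beta}=\bigl\langle\sigma^1_{i_1}\cdots\sigma^{p-1}_{i_{p-1}}\bigr\rangle^{\alpha,\otimes(p-1)}_{N,\beta}
\]
to write $A_p^\alpha=\langle Y_p\rangle^{\alpha,\otimes(p-1)}_{N,\beta}$, where $Y_p(\sigma^1,\ldots,\sigma^{p-1})=\frac{\beta_p\sqrt{p!}}{N^{(p-1)/2}}\sum_{\vec i}g_{\vec i,N}\sigma^1_{i_1}\cdots\sigma^{p-1}_{i_{p-1}}$ is, for fixed spin configurations, centered Gaussian in $g_{\cdot N}$ with variance $\frac{\beta_p^2 p!}{N^{p-1}}\binom{N-1}{p-1}\leq\beta_p^2 p$. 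Jensen's inequality yields $|A_p^\alpha|^4\leq\langle|Y_p|^4\rangle^{\alpha,\otimes(p-1)}_{N,\beta}$ and the Gaussian moment formula gives $\mathbb{E}_{g_{\cdot N}}|Y_p|^4\leq 3(\beta_p^2 p)^2$ uniformly in the replicas. To decouple $g_{\cdot N}$ from the Gibbs measure $G_{N,\beta}$ (which itself depends on $g_{\cdot N}$), apply the change-of-measure technique from Lemma~\ref{add:lem1}, which passes $\mathbb{E}\langle\cdot\rangle_{N,\beta}$ to $\mathbb{E}\langle\cdot\,\cosh(X_{N,\beta}(\tau)+h)\rangle_{N-1,\beta'}$ at the cost of a single $\cosh$ factor; Cauchy--Schwarz separates $|Y_p|^4$ from the $\cosh$ factor, both of which are then controlled uniformly via the variance bound $\mathrm{Var}(X_{N,\beta}(\tau))\leq C_\beta$.

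For $B_p^\rho$, the parallel strategy uses $s_i^\rho=\mathbb{E}_{\nu^\rho}[\tau_i]$ with $\nu^\rho$ the probability measure on $\Sigma_{N-1}$ of density proportional to $\cosh(X_{N,\beta}(\tau)+h)\mathbbm{1}_{\Sigma_{N-1}^\rho}(\tau)G_{N-1,\beta'}(\tau)$. Multilinearity then gives $B_p^\rho=\mathbb{E}_{(\nu^\rho)^{\otimes(p-1)}}[Y_p]$ and Jensen yields $|B_p^\rho|^4\leq\mathbb{E}_{(\nu^\rho)^{\otimes(p-1)}}|Y_p|^4$. After the same change of measure, the problem reduces to bounding
\[
\frac{\mathbb{E}_{g_{\cdot N}}\bigl[|Y_p|^4\prod_{\ell=1}^{p-1}\cosh(X_{N,\beta}(\tau^\ell)+h)\bigr]}{\bigl(\langle\cosh\,\mathbbm{1}_{\Sigma_{N-1}^\rho}\rangle_{N-1,\beta'}\bigr)^{p-1}},
\]
the denominator being bounded below by $W_{N-1,\beta'}(\Sigma_{N-1}^\rho)^{p-1}$ since $\cosh\geq 1$.

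The most delicate step, I expect, is controlling the product of $p-1$ correlated $\cosh$-factors in the numerator: a naive H\"older bound produces exponential-in-$p^2$ growth via the joint Gaussian moment-generating function, which would defeat summability. The resolution should invoke the concentration result of Lemma~\ref{concentration}, which confines the pairwise overlaps $R(\tau^\ell,\tau^{\ell'})$ to a neighborhood of $q_P$ under $\nu^\rho$ and thereby tightens the covariance structure of $(X_{N,\beta}(\tau^\ell))_\ell$. Splitting the probability space into the high-probability concentration event (where a Gaussian integration-by-parts computation then produces a polynomial-in-$p$ bound) and its complement (handled by Lemma~\ref{add:lem2}~\eqref{add:lem2:eq2} together with a lower bound $W_{N-1,\beta'}(\Sigma_{N-1}^\rho)\geq\delta$) should yield the desired $c_p\leq C(\beta_p^2 p)^2$ uniformly in $N$ and small $\epsilon$.
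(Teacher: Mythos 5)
Your high-level reduction (uniform fourth-moment bounds with $\sum_p c_p^{1/4}<\infty$, then Minkowski/H\"older over the tail) is exactly the paper's structure, cf.\ Lemma \ref{lemma:Xn2bd} and \eqref{eqn:Xnbound-holder}. The gap is in how you propose to get the per-$p$ bound $c_p\le C(\beta_p^2p)^2$ uniformly in $N$. For $A_p^\alpha$, after the replica representation the quantity $\langle |Y_p|^4\rangle^{\alpha,\otimes(p-1)}_{N,\beta}$ involves $p-1$ replicas drawn from the \emph{$N$-spin conditional} measure $G_{N,\beta}^\alpha$, which itself depends on $g_{\cdot N}$; a single change of measure for the outer sample $\alpha$ (one $\cosh$ factor, as in Lemma \ref{add:lem1}) does not decouple the inner replica measure from $g_{\cdot N}$. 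Decoupling all of it costs a $\cosh(X_{N,\beta}(\tau^\ell)+h)$ factor per replica (plus the indicator/normalization of $\Sigma_N^\alpha$), which is precisely the exponential-in-$p^2$ obstruction you yourself identify for $B_p^\rho$. And the resolution you sketch for that obstruction does not work: (i) the proposition requires bounds valid for \emph{all} $N\ge 1$ with $p_0$ independent of $N$ and $\epsilon$, whereas Lemma \ref{concentration} and \eqref{add:lem2:eq2} are purely asymptotic ($\limsup_{N\to\infty}$, then $\epsilon\downarrow0$) and give nothing quantitative at finite $N$; (ii) even on the concentration event the fields $X_{N,\beta}(\tau^1),\ldots,X_{N,\beta}(\tau^{p-1})$ have pairwise covariance close to $\zeta'(q_P)>0$ (recall $h>0$ forces $q_P>0$), so $\E_{g_{\cdot N}}\prod_{\ell=1}^{p-1}\cosh(X_{N,\beta}(\tau^\ell)+h)$ is still of order $e^{cp^2}$, and the event splitting with $W_{N-1,\beta'}(\Sigma_{N-1}^{\rho})\ge\delta$ only adds a factor $\delta^{-(p-1)}$; nothing in the outline brings $c_p$ down to polynomial growth in $p$, so summability of $c_p^{1/4}$ against $\beta_p=o(2^{-p/2})$ fails.

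The paper avoids exponential moments of products of cavity fields altogether. In Lemma \ref{lemma:Xn2bd} it expands $\E\langle (A_p^\alpha)^4\rangle_{N,\beta}$ as a sum over four index tuples $\mathbf i_1,\ldots,\mathbf i_4$ and over $\alpha$, and integrates by parts in the Gaussians $g_{\mathbf i_l,N}$, using that each differentiation of $G_{N,\beta}(\alpha)$ or of $\prod_{l,k}\langle\sigma_{i_{l,k}}\rangle_{N,\beta}^{\alpha}$ (respectively $\prod s_{i_{l,k}}^{\rho}$ for $B_p^\rho$) costs only a factor of order $\Delta_p(p-1)$ with $\Delta_p=\beta_p\sqrt{p!}\,N^{-(p-1)/2}$, cf.\ \eqref{add:eq10}--\eqref{add:eq11}; a case analysis over coincidences among the tuples then yields $\E\langle (A_p^\alpha)^4\rangle_{N,\beta}\le C(\beta_p^4p^2+\beta_p^6p^5+\beta_p^8p^8)$ uniformly in $N$ and $\epsilon$, whose fourth root is summable. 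If you want to follow your route, you would have to replace the concentration-based step by an argument of this integration-by-parts/counting type; as written, the key estimate is not established.
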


Recall the definitions of $A_{\ominus}^{\rho}$ and $G_{N-1,\beta'}(A_{\ominus}^{\rho})$ from Section \ref{sec:cavity1}. As we have seen from Lemma \ref{add:lem2} and Remark \ref{add:remark}, the Gibbs probability of the event $G_{N-1,\beta'}(A_{\ominus}^{\rho})<\delta$ is essentially negligible, the next proposition further shows that the second moments of $X_{N,\beta}(\gbs{\sigma}_{N,\beta}^{\alpha})$ and $X_{N,\beta}(s^{\rho})$ on this event remain negligible.

\begin{prop} \label{Coro:Xn-Negligible}For any $\epsilon>0,$ we have that
	\begin{align}
		\label{eqn:approxXn-sigma-smallW}
		\lim_{\delta \downarrow 0}\limsup_{N\to\infty}\E \bgbs{\big[X_{N,\beta}(\gbs{\sigma}_{N,\beta}^{\alpha})\mathbbm 1_{\{G_{N-1,\beta'}(A_{\ominus}^{\rho})<\delta\}}\big]^2}_{N, \beta} &=0, \\
		\label{eqn:approxXn-s-smallW}
		\lim_{\delta \downarrow 0}\limsup_{N\to\infty}\E \bgbs{\big[X_{N,\beta}(s^{\rho})\mathbbm 1_{\{G_{N-1,\beta'}(A_{\ominus}^{\rho})<\delta\}}\big]^2}_{N, \beta} &=0. 
	\end{align}
\end{prop}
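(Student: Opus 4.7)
The plan is to combine the truncation control of Proposition \ref{Coro:tailofXn} with a Cauchy--Schwarz split exploiting the fact that the event $E_\delta:=\{W_{N-1,\beta'}(A_{\ominus}^{\rho})<\delta\}$ depends only on the disorder of $H_{N-1,\beta'}$ and is therefore independent of the cavity disorders $g_{\cdot N}$. I present the argument for \eqref{eqn:approxXn-sigma-smallW}; the assertion \eqref{eqn:approxXn-s-smallW} is handled by a parallel argument sketched at the end.

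Fix $\eta>0$. By Proposition \ref{Coro:tailofXn}, choose $p_0=p_0(\eta)$ so that the tail $R^\alpha_{p_0}:=\sum_{p>p_0}A^\alpha_p$ satisfies $\E\la(R^\alpha_{p_0})^4\ra_{N,\beta}\le \eta$ for all $N$ and small $\epsilon$. Writing $T^\alpha_{p_0}:=\sum_{p=2}^{p_0}A^\alpha_p$ and using $(a+b)^2\le 2(a^2+b^2)$ together with Jensen's inequality,
\[
\E\la X_{N,\beta}(\gbs{\sigma}_{N,\beta}^{\alpha})^2\mathbbm 1_{E_\delta}\ra_{N,\beta}\le 2\E\la(T^\alpha_{p_0})^2\mathbbm 1_{E_\delta}\ra_{N,\beta}+2\sqrt{\eta}.
\]
Cauchy--Schwarz applied to the remaining term gives $\E\la(T^\alpha_{p_0})^2\mathbbm 1_{E_\delta}\ra_{N,\beta}\le (\E\la(T^\alpha_{p_0})^4\ra_{N,\beta})^{1/2}(\E\la \mathbbm 1_{E_\delta}\ra_{N,\beta})^{1/2}$; since $\mathbbm 1_{E_\delta}$ is independent of $g_{\cdot N}$, Lemma \ref{add:lem1} yields $\E\la \mathbbm 1_{E_\delta}\ra_{N,\beta}\le C\,\E\la \mathbbm 1_{\{W_{N-1,\beta'}(A_{\ominus}^\tau)<\delta\}}\ra_{N-1,\beta'}$, which tends to $0$ as $N\to\infty$ then $\delta\downarrow 0$ by \eqref{add:lem2:eq2}. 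Hence, provided $\sup_N\E\la(T^\alpha_{p_0})^4\ra<\infty$ for each fixed $p_0$, taking the required double limit and then $\eta\downarrow 0$ concludes the proof.

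For the uniform $L^4$ bound, Minkowski's inequality reduces the task to showing $\|A^\alpha_p\|_{L^4}\le C_p$ for each fixed $p$. The key tool is the replica identity $\gbs{\sigma_{i_1}}^\alpha\cdots\gbs{\sigma_{i_{p-1}}}^\alpha=\gbs{\sigma^1_{i_1}\cdots\sigma^{p-1}_{i_{p-1}}}^\alpha$ with $p-1$ i.i.d.\ replicas $\sigma^\ell\sim G_{N,\beta}^\alpha$, which lets one write $A^\alpha_p=\gbs{\tilde A_p(\sigma^1,\ldots,\sigma^{p-1})}^\alpha$ for the linear-in-$g$ polynomial $\tilde A_p(\tau^1,\ldots,\tau^{p-1}):=\beta_p\sqrt{p!}\,N^{-(p-1)/2}\sum_{i_1<\cdots<i_{p-1}}g_{i_1,\ldots,i_{p-1},N}\tau^1_{i_1}\cdots\tau^{p-1}_{i_{p-1}}$. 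Jensen's inequality yields $(A^\alpha_p)^4\le \gbs{\tilde A_p^4}^\alpha$, and then a multi-replica version of Lemma \ref{add:lem1} (transferring each of the $p-1$ replicas from $G_{N,\beta}$ to $G_{N-1,\beta'}$ via the Radon--Nykodym bound $\cosh(X_{N,\beta}(\tau^\ell)+h)$) gives
\[
\E\la(A^\alpha_p)^4\ra_{N,\beta}\le \E\lrgbs{\E_{g_{\cdot N}}\Bigl[\tilde A_p(\tau^1,\ldots,\tau^{p-1})^4\prod_{\ell=1}^{p-1}\cosh(X_{N,\beta}(\tau^\ell)+h)\Bigr]}_{N-1,\beta'}.
\]
Since $G_{N-1,\beta'}$ is independent of $g_{\cdot N}$, the inner Gaussian expectation (with $\tau^1,\ldots,\tau^{p-1}$ frozen) is a polynomial moment of jointly centered Gaussians with variances uniformly bounded by $\beta_p^2p$ and $C_\beta$, and hence by a constant depending only on $p$ and $\beta$.

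The assertion \eqref{eqn:approxXn-s-smallW} follows the same plan with one change: one starts from the analogous replica representation $s^\rho_{i_1}\cdots s^\rho_{i_{p-1}}=\gbs{\tau^1_{i_1}\cdots\tau^{p-1}_{i_{p-1}}\prod_\ell \cosh(X_{N,\beta}(\tau^\ell)+h)}^\rho_{N-1,\beta'}\big/(\gbs{\cosh(X_{N,\beta}(\cdot)+h)}^\rho_{N-1,\beta'})^{p-1}$, uses $\gbs{\cosh(X_{N,\beta}(\cdot)+h)}^\rho_{N-1,\beta'}\ge 1$ together with Jensen's inequality to deduce $(B^\rho_p)^4\le \gbs{\tilde A_p^4\prod_\ell\cosh(X_{N,\beta}(\tau^\ell)+h)}^\rho_{N-1,\beta'}$, and then applies Lemma \ref{add:lem1} (to pass from $\la\cdot\ra_{N,\beta}$ to $\la\cdot\ra_{N-1,\beta'}$) followed by the same Gaussian moment estimate. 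The main technical obstacle throughout is this uniform $L^4$ control: both $\gbs{\sigma}^\alpha$ and $s^\rho$ depend on $g_{\cdot N}$ through the Gibbs measure, so one cannot condition on $g_{\cdot N}$ directly; it is the replica representation coupled with the change-of-measure inequality of Lemma \ref{add:lem1} that decouples these dependencies and reduces everything to a finite-parameter Gaussian moment computation.
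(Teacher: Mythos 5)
Your outer skeleton (truncation via Proposition \ref{Coro:tailofXn}, Cauchy--Schwarz against the indicator, and Lemma \ref{add:lem1} plus \eqref{add:lem2:eq2} to kill $\E\la \mathbbm 1_{E_\delta}\ra_{N,\beta}$) is the same as the paper's, which reduces everything to a uniform $L^4$ bound on $A_p^\alpha$ and $B_p^\rho$ for fixed $p$ (the paper's Lemma \ref{lemma:Xn2bd}). The gap is in how you try to get that bound for $A_p^\alpha$. Your replica identity writes $A_p^\alpha=\gbs{\tilde A_p(\sigma^1,\ldots,\sigma^{p-1})}_{N,\beta}^{\alpha}$ with replicas drawn from the \emph{conditional} measure $G_{N,\beta}^{\alpha}$, not from $G_{N,\beta}$. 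There is no ``multi-replica version of Lemma \ref{add:lem1}'' transferring such replicas to $G_{N-1,\beta'}$: the Radon--Nikodym derivative of the $\Sigma_{N-1}$-marginal of $G_{N,\beta}^{\alpha}$ with respect to $G_{N-1,\beta'}$ is bounded by $\cosh(X_{N,\beta}(\tau)+h)\,\mathbbm 1_{\Sigma_N^\alpha}/W_{N}(\Sigma_N^{\alpha})$, so transferring $p-1$ replicas produces the factor $W_N(\Sigma_N^{\alpha})^{-(p-1)}$, which has no deterministic bound. This is precisely the regime the proposition is meant to handle (small pure-state weights), so the step is circular in spirit and your displayed inequality $\E\la(A_p^\alpha)^4\ra_{N,\beta}\le \E\la\E_{g_{\cdot N}}[\tilde A_p^4\prod_\ell\cosh(X_{N,\beta}(\tau^\ell)+h)]\ra_{N-1,\beta'}$ is not justified. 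Nor can you instead keep the inner average conditional and push $\E_{g_{\cdot N}}$ inside, because $G_{N,\beta}^{\alpha}$ itself depends on $g_{\cdot N}$ through the full $N$-spin Gibbs weights. The paper avoids all of this by proving the $L^4$ bound directly via Gaussian integration by parts in the $g_{\cdot N}$ variables: the only inputs are that $G_{N,\beta}(\alpha)$ and the conditional magnetizations $\gbs{\sigma_i}_{N,\beta}^{\alpha}$ have partial derivatives in $g_{\mathbf i,N}$ of order $\Delta_p$ times bounded quantities (as in \eqref{add:eq10}--\eqref{add:eq11}); no $1/W$ normalization ever appears.

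For the second assertion \eqref{eqn:approxXn-s-smallW} your route is closer to workable, since the inner conditional measure $\la\cdot\ra_{N-1,\beta'}^{\rho}$ does not involve $g_{\cdot N}$: after your Jensen step one may take $\E_{g_{\cdot N}}$ inside the conditional bracket, and the conditional average of a quantity bounded uniformly in the spins is bounded, with no $1/W$ issue. Even there, though, Lemma \ref{add:lem1} as stated requires $\Gamma(\rho)$ to be independent of $g_{\cdot N}$, whereas your $\Gamma(\rho)=\la\tilde A_p^4\prod_\ell\cosh(X_{N,\beta}(\tau^\ell)+h)\ra_{N-1,\beta'}^{\rho}$ is not; this is fixable by the change-of-measure-plus-H\"older-in-$\E_{g_{\cdot N}}$ argument used repeatedly in the paper (e.g.\ in Lemma \ref{lemma:CpDp-diff}), but it needs to be said. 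The genuine missing piece remains the $L^4$ control of $A_p^\alpha$, i.e.\ the content of Lemma \ref{lemma:Xn2bd}, and your proposal does not supply a valid substitute for it.
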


The proofs of  Propositions \ref{Coro:tailofXn} and \ref{Coro:Xn-Negligible} are deferred to Appendix \ref{app1}.

\subsection{Removal of the cavity spin constraint II}

Fix $2\le p\le p_0$ and recall the definitions of $A_p^\alpha$ and $B_p^{\rho}$ in \eqref{eqn:defAp} and \eqref{eqn:defBp}. Using replicas, we may rewrite $A_p^\alpha$ and $B_p^{\rho}$ as follows.
For $\boldsymbol{\tau}=(\tau^1,\ldots,\tau^{p-1})\in \Sigma_{N-1}^{p-1}$, define
\begin{align}
	\label{eqn:defnZNp}
	Z_{N,p}(\boldsymbol{\tau})= \frac{\sqrt{p!}}{N^{(p-1)/2}}\sum_{1\le i_1<\cdots <i_{p-1}\le N-1}g_{i_1,\ldots, i_{p-1},N}\tau^{1}_{i_1}\cdots \tau^{p-1}_{i_{p-1}}.
\end{align}
Define
\begin{align*}
	C_p^\alpha &= \bbbgbs{Z_{N,p}(\boldsymbol{\tau})\prod_{l=1}^{p-1}\frac{ e^{X_{N,\beta}(\tau^l)+h}\mathbbm 1_{\Sigma_N^{\alpha}}((\tau^l,1))+e^{-X_{N,\beta}(\tau^l)-h}\mathbbm 1_{\Sigma_N^{\alpha}}((\tau^l,-1))}{2}}_{N-1,\beta'},\\
	 C_p^{\rho}&=\bbbgbs{Z_{N,p}(\boldsymbol{\tau})\prod_{l=1}^{p-1} \cosh(X_{N,\beta}(\tau^l)+h)\mathbbm 1_{\Sigma_{N-1}^{\rho}}(\tau^l)}_{N-1,\beta'},\\
	D_p^{\alpha}&=\bbbgbs{\prod_{l=1}^{p-1}\frac{ e^{X_{N,\beta}(\tau^l)+h}\mathbbm 1_{\Sigma_N^{\alpha}}((\tau^l,1))+e^{-X_{N,\beta}(\tau^l)-h}\mathbbm 1_{\Sigma_N^{\alpha}}((\tau^l,-1))}{2}}_{N-1,\beta'},\\
	D_p^{\rho}&=\bbbgbs{\prod_{l=1}^{p-1}\cosh(X_{N,\beta}(\tau^l)+h)\mathbbm 1_{\Sigma_{N-1}^{\rho}}(\tau^l)}_{N-1,\beta'}.
\end{align*}
Then we can write
\begin{align}
	\label{eqn:rewriteApBp}
	A_p^\alpha 
	&=\beta_p\frac{C_p^\alpha}{D_p^\alpha}\quad\mbox{and}\quad
	B_p^{\rho} =\beta_p\frac{C_p^{\rho}}{D_p^{\rho}}.
\end{align}
Denote 
\[
\eta_N= \E  \bgbs{ \bgbs{\mathbbm 1_{A_{\oplus}^{\rho}\setminus A_{\ominus}^{\rho}} (\btau)}_{N-1, \beta'}  }_{N-1, \beta'}.
\]
As we have seen at the end of the proof of Lemma \ref{lem:sandwich}, $\eta_N\to 0$ as $N\to \infty$.
Our next Lemma bounds the errors when  approximating $C_p^\alpha$ with $C^{\rho}_{p}$ and $D_p^\alpha$ with $D_p^{\rho}$, which are the main ingredients when we estimate the difference between $A_p^\alpha$ and $B_p^{\rho}$. The proof of the lemma is deferred to Appendix \ref{app2}.

\begin{lemma} \label{lemma:CpDp-diff}For each $r\ge 1$ and $p\ge 2$, there exist positive constants $K(r,p,\beta,h)$ and $K'(r,p,\beta,h)$
such that for all $N\geq 1,$
	\begin{align*}
		\E \bgbs{(C_p^\alpha-C_p^{\rho})^{2r}}_{N,\beta} &\le \eta_N K(r,p,\beta, h) ,  \\
		\E \bgbs{(D_p^\alpha-D_p^{\rho})^{2r}}_{N,\beta} &\le  \eta_N K'(r,p,\beta, h) .
	\end{align*}
\end{lemma}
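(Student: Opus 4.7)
The plan is to bound the two differences pointwise via a telescoping identity combined with the nested inclusions in \eqref{eqn:nestedsets}, and then control the resulting expectations by adapting the change-of-measure argument of Lemma \ref{add:lem1} together with a conditional Gaussian MGF estimate in $g_{\cdot N}$. The key point is to achieve the factor $\eta_N$ to exponent one rather than one-half.

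For the $D_p$-bound, denote the single-replica factor appearing in $D_p^\alpha$ (resp.\ $D_p^\rho$) by $f^\alpha(\tau)$ (resp.\ $f^\rho(\tau)$), so that $D_p^\alpha=\gbs{f^\alpha}_{N-1,\beta'}^{p-1}$ and $D_p^\rho=\gbs{f^\rho}_{N-1,\beta'}^{p-1}$, each bounded in absolute value by $V:=\gbs{\cosh(X_{N,\beta}(\tau)+h)}_{N-1,\beta'}\ge 1$. The sandwich in \eqref{eqn:nestedsets} gives $|f^\alpha(\tau) - f^\rho(\tau)| \le \cosh(X_{N,\beta}(\tau)+h)\,\mathbbm 1_{A_\oplus^\rho \setminus A_\ominus^\rho}(\tau)$, and the factorization $a^{p-1}-b^{p-1}=(a-b)\sum_{k=0}^{p-2}a^k b^{p-2-k}$ yields
\[
|D_p^\alpha - D_p^\rho| \le (p-1)\,\gbs{\cosh(X_{N,\beta}(\tau)+h)\mathbbm 1_{A_\oplus^\rho \setminus A_\ominus^\rho}(\tau)}_{N-1,\beta'}\,V^{p-2}.
\]
Raising to the $2r$-th power, applying Jensen on the Gibbs averages to pass $(\cdot)^{2r}$ inside (using $\mathbbm 1^{2r}=\mathbbm 1$), taking $\E\gbs{\cdot}_{N,\beta}$, and applying the change-of-measure argument of Lemma \ref{add:lem1} with Radon-Nikodym derivative bounded by $\cosh(X_{N,\beta}(\rho)+h)$ (using $V\ge 1$), I then compute $\E_{g_{\cdot N}}$ first. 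Since $\mathbbm 1_{A_\oplus^\rho \setminus A_\ominus^\rho}$ is independent of $g_{\cdot N}$, it factors out, and the remaining product of $\cosh$-factors at various replicas and at $\rho$ is bounded uniformly by a constant $C(r,p,\beta,h)$ via Gaussian MGF estimates (each $X_{N,\beta}(\cdot)$ has variance at most $C_\beta$). The remaining double Gibbs expectation of the indicator equals $\eta_N$.

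For $C_p^\alpha - C_p^\rho$, the same telescoping over $\prod_l f^\bullet(\tau^l)$ inside the joint $(p-1)$-replica Gibbs average produces $p-1$ terms equal by symmetry in the replica label, each carrying the extra factor $Z_{N,p}(\boldsymbol{\tau})$ from \eqref{eqn:defnZNp}. Since $Z_{N,p}(\boldsymbol{\tau})$ is a centered Gaussian in $g_{\cdot N}$ with variance uniformly bounded by $p$ for all $\boldsymbol{\tau}$, its $4r$-th moment is finite; absorbing $|Z_{N,p}|^{2r}$ into the conditional Gaussian MGF estimate via Cauchy-Schwarz produces the bound $\eta_N K(r,p,\beta,h)$ by the same route.

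The main obstacle I anticipate is obtaining the linear $\eta_N$ rather than the suboptimal $\sqrt{\eta_N}$ that a direct Cauchy-Schwarz inside the inner Gibbs average, separating $\cosh$-factors from $\mathbbm 1$, would produce. The remedy is to keep $\cosh^{2r}(X_{N,\beta}+h)\,\mathbbm 1_{A_\oplus^\rho \setminus A_\ominus^\rho}$ together in the Gibbs expectation and compute $\E_{g_{\cdot N}}$ only after the change of measure, at which point the indicator (being $g_{\cdot N}$-independent) factors out intact while the $\cosh$-factors collapse to a bounded deterministic constant.
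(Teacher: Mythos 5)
Your proposal is correct and follows the same core strategy as the paper: sandwich the $\alpha$-dependent indicators between $\mathbbm 1_{A_\ominus^{\rho}}$ and $\mathbbm 1_{A_\oplus^{\rho}}$ via \eqref{eqn:nestedsets}, telescope over the $p-1$ replica factors, pass to the $2r$-th power by Jensen using $\mathbbm 1^{2r}=\mathbbm 1$, apply the change of measure of Lemma \ref{add:lem1} with the pointwise Radon--Nikodym bound $\cosh(X_{N,\beta}(\rho)+h)$, and only then take $\E_{g_{\cdot N}}$, exchanging it with the inner Gibbs averages so that the ($g_{\cdot N}$-independent) indicator survives intact while the $\cosh$ (and $|Z_{N,p}|^{2r}$) factors are bounded by constants through uniform variance/MGF estimates; what remains is exactly $\eta_N$. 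The one genuine difference is in the $C_p$ term: you keep $|Z_{N,p}|^{2r}$, the $\cosh$ powers, and the indicator together inside the joint Gibbs bracket and defer all splitting to H\"older/Cauchy--Schwarz under $\E_{g_{\cdot N}}$, which preserves the indicator to the first power and yields the bound $\eta_N K(r,p,\beta,h)$ exactly as stated in the lemma. The paper instead applies Cauchy--Schwarz within the inner Gibbs average to separate $Z_{N,p}^2(\boldsymbol\tau)$ from the squared difference of indicator products, which leaves $\gbs{\mathbbm 1_{A_\oplus^\rho\setminus A_\ominus^\rho}}^{1/2}$ and hence only $\sqrt{\eta_N}$ in \eqref{add:eq6} (and indeed the paper later uses $K\sqrt{\eta_N}$ for $\E\gbs{(C_p^\alpha-C_p^\rho)^2}_{N,\beta}$ in the proof of Proposition \ref{thm:gbs-cavity-approx}). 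So your route actually delivers the stronger exponent claimed in the lemma statement, while the paper's is marginally simpler; either bound suffices downstream since only $\eta_N\to 0$ is used.
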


\subsection{Proof of Proposition \ref{thm:gbs-cavity-approx}}
First,  write
	\begin{align*}
		&\ \ \ X_{N,\beta}(\gbs{\sigma}_{N,\beta}^\alpha)-X_{N,\beta}(s^{\rho})\\
		&=\Big(X_{N,\beta}(\gbs{\sigma}_{N,\beta}^\alpha)-X_{N,\beta}(s^{\rho})\Big) \mathbbm 1_{\{G_{N-1,\beta'}(A_{\ominus}^{\rho})\ge \delta \}} + \Big(X_{N,\beta}(\gbs{\sigma}_{N,\beta}^\alpha)-X_{N,\beta}(s^{\rho})\Big) \mathbbm 1_{\{G_{N-1,\beta'}(A_{\ominus}^{\rho})< \delta \}}\\
		&=\Big(\sum_{2\le p\le p_0}\big(A_p^{\alpha} - B_p^{\rho}\big)\Big)\mathbbm 1_{\{G_{N-1,\beta'}(A_{\ominus}^{\rho})\ge \delta \}} + \Big(\sum_{p> p_0}\big(A_p^{\alpha} - B_p^{\rho}\big)\Big)\mathbbm 1_{\{G_{N-1,\beta'}(A_{\ominus}^{\rho})\ge  \delta \}} \\
		& \qquad + X_{N,\beta}(\gbs{\sigma}_{N,\beta}^\alpha)\mathbbm 1_{\{G_{N-1,\beta'}(A_{\ominus}^{\rho})< \delta \}} - X_{N,\beta}(s^{\rho})\mathbbm 1_{\{G_{N-1,\beta'}(A_{\ominus}^{\rho})< \delta \}}.
	\end{align*}
	It follows that 
	\begin{align*}
		&\ \E \bgbs{\big(X_{N,\beta}(\gbs{\sigma}_{N,\beta}^\alpha)-X_{N,\beta}(s^{\rho})\big)^2}_{N,\beta} \\
		&\le C \E \bbbgbs{\Big(\sum_{2\le p\le p_0}\big(A_p^{\alpha} - B_p^{\rho}\big)\Big)^2\mathbbm 1_{\{G_{N-1,\beta'}(A_{\ominus}^{\rho})\ge \delta \}}}_{N,\beta} \\
		&\ \ \ + C \E \bbbgbs{\Big(\sum_{p>p_0}A_p^{\alpha} \Big)^2}_{N,\beta} + C \E \bbbgbs{\Big(\sum_{p>p_0}B_p^{\rho} \Big)^2}_{N,\beta} \\
		&\ \ \  + C \E \bgbs{X^2_{N,\beta}(\gbs{\sigma}_{N,\beta}^\alpha)\mathbbm 1_{\{G_{N-1,\beta'}(A_{\ominus}^{\rho})< \delta \}} }_{N,\beta} +  C \E \bgbs{X^2_{N,\beta}(s^{\rho})\mathbbm 1_{\{G_{N-1,\beta'}(A_{\ominus}^{\rho})< \delta \}} }_{N,\beta}.
	\end{align*}
	By Propositions \ref{Coro:tailofXn} and \ref{Coro:Xn-Negligible}, the last four terms can be made arbitrary small by choosing $\delta >0$ small enough and $p_0$ large enough. Thus, it suffices to show that for any $0 < \delta < 1$ and $p_0 \ge 2$, the first term vanishes as $N\to \infty$, i.e., 
	\begin{align}
		\label{eqn:Ap-Bp-main-term}
		\lim_{N\to\infty}\E \bbbgbs{\bigg[\sum_{2\le p\le p_0}\big(A_p^{\alpha} - B_p^{\rho}\big)\bigg]^2\mathbbm 1_{\{G_{N-1,\beta'}(A_{\ominus}^{\rho})\ge \delta \}}}_{N,\beta} =0. 
	\end{align}
	To do this,  notice that from \eqref{eqn:rewriteApBp},
	\begin{align*}
		\Big(\sum_{2\le p\le p_0}\big(A_p^{\alpha} - B_p^{\rho}\big)\Big)^2 &= \bigg(\sum_{2\le p\le p_0}\beta_p\Big(\frac{C_p^\alpha}{D_p^\alpha}- \frac{ C_p^{\rho}}{D_p^{\rho}}\Big)\bigg)^2\\
		& \le \bigg(\sum_{2\le p\le p_0} \beta_p^2\bigg)\bigg( \sum_{2\le p\le p_0}\Big(\frac{C_p^\alpha}{D_p^\alpha}- \frac{ C_p^{\rho}}{ D_p^{\rho}}\Big)^2\bigg)\\
		&\le 2C_\beta \sum_{2\le p\le p_0}\bigg(\Big(\frac{C_p^{\rho}}{D_p^{\rho}}\Big)^2 \Big(\frac{D_p^\alpha- D_p^{\rho}}{D_p^\alpha}\Big)^2 + \Big(\frac{C_p^\alpha- C_p^{\rho}}{D_p^\alpha}\Big)^2\bigg).
	\end{align*}
    Using \eqref{eqn:nestedsets} and the fact $\cosh x \ge 1$, we have
	$
	D_p^\alpha, D_p^{\rho}\ge \delta^p
	$
	on the event $\{G_{N-1,\beta'}(A_{\ominus}^{\rho})\ge \delta \}$ and it follows that
	\begin{align}
		\notag
		&\ \ \ \E \bbbgbs{\bigg[\sum_{2\le p\le p_0}\big(A_p^{\alpha} - B_p^{\rho}\big)\bigg]^2\mathbbm 1_{\{G_{N-1,\beta'}(A_{\ominus}^{\rho})\ge \delta \}}}_{N,\beta} \\
		\notag
		&\le \frac{2C_\beta}{\delta^{2p_0}}\sum_{2\le p\le p_0}\E \lrgbs{\Big[\Big(\frac{C_p^{\rho}}{\delta^{p_0}}\Big)^2 \big(D_p^\alpha- D_p^{\rho}\big)^2 + \big(C_p^\alpha-C_p^{\rho}\big)^2\Big]}_{N,\beta}\\
		\label{eqn:tempboundAp-Bp}
		&\le \frac{2C_\beta}{\delta^{2p_0}}\sum_{2\le p\le p_0}\Bigl(\frac{1}{\delta^{2p_0}}\sqrt{\E \bgbs{\big(C_p^{\rho}\big)^4}_{N,\beta}\E\bgbs{ \big(D_p^\alpha-D_p^{\rho}\big)^4 }_{N,\beta}} + \E\bgbs{\big(C_p^\alpha-C_p^{\rho}\big)^2}_{N,\beta}\Bigr).
	\end{align}
	By Lemma \ref{lemma:CpDp-diff}, we have for all $2\le p \le p_0$, 
	\begin{align*}
		\E\bgbs{ \big(D_p^\rho-D_p^{\rho}\big)^4 }_{N,\beta} &\le K  \eta_N,\\
		\E\bgbs{\big(C_p^\alpha-C_p^{\rho}\big)^2}_{N,\beta} &\le K \sqrt{\eta_N},
	\end{align*}
for all $N\geq 1,$ where $K$ is a constant depending only on $p_0,\beta,h.$
	Also, a slight modification, by dropping the indicators in \eqref{add:eq6}, we can bound 
	$
	\E \gbs{\big(C_p^{\rho}\big)^4}_{N,\beta}\le K'
	$ for $2\le p\le p_0$, where $K'$ is a constant depending on $p,\beta,h.$
	Plugging these into \eqref{eqn:tempboundAp-Bp} and then sending $N\to \infty$ yields \eqref{eqn:Ap-Bp-main-term} and thus completes the proof of \eqref{eqn:gbs-cavity-approx}.

\section{\label{sec:cavitytoTAP2}From cavity  to TAP equations}

We will complete the proof of Theorem \ref{thm:main} in this section. At the current stage, we have seen that $X_{N,\beta}(\la \sigma\ra_{N,\beta}^\alpha)$ asymptotically equals $X_{N,\beta}(s^{\rho}) = \sum_{p\ge 2}B^{\rho}_{p}$ following from Proposition \ref{thm:gbs-cavity-approx}, where $B_p^\rho$ is defined in \eqref{eqn:defBp}. Our next step is to handle this sum, via an argument similar to the proof of Proposition \ref{thm3}, by adapting a multivariate central limit theorem for the Gaussian fields, $X_{N, \beta}(\tau^1),\ldots,X_{N, \beta}(\tau^{p-1})$, and $Z_{N,p}(\boldsymbol{\tau})$ appearing in $C_p^{\rho}$ and $D_p^\rho$, see \eqref{eqn:rewriteApBp}. We establish this limit theorem in the first subsection.

\subsection{A multivariate central limit theorem for the cavity fields}

Let $\boldsymbol\xi=(\xi_0,\ldots, \xi_{p-1})^T$ be a centered Gaussian (column) vector of length $p$ with covariance matrix
\[
\E(\boldsymbol\xi \boldsymbol\xi^T)=
\begin{bmatrix}
p (1-q_P^{p-1}) &  \beta_ppq_P^{p-2}(1-q_P) &\beta_p pq_P^{p-2}(1-q_P) & \cdots &  \beta_p pq_P^{p-2}(1-q_P)\vspace{2mm}\\
	\beta_p pq_P^{p-2}(1-q_P) & \zeta'(1)-\zeta'(q_P) & 0 & \cdots &0\\
	\beta_p pq_P^{p-2}(1-q_P) & 0 & \zeta'(1)-\zeta'(q_P) &\cdots & 0\\
	\vdots & \vdots &\vdots & \ddots & \vdots\\
	\beta_p pq_P^{p-2}(1-q_P) & 0 & 0 &\cdots & \zeta'(1)-\zeta'(q_P)
\end{bmatrix},
\]
that is, $\xi_1,\ldots, \xi_{p-1}$ are i.i.d. $N(0, \zeta'(1)-\zeta'(q_P))$ and $\xi_0 \sim N(0, p(1-q_P^{p-1}))$ that has covariance $\beta_ppq^{p-2}(1-q)$ with each of $\xi_1,\ldots, \xi_{p-1}$. Note that this multivariate Gaussian distribution is well-defined. To see this, consider i.i.d. standard normal random variables $z_0, z_1, \ldots, z_{p-1}$, and $\boldsymbol \xi$ can be constructed via linear combinations of $z_0,z_1,\ldots, z_{p-1}$ as follows:
\begin{align*}
	\xi_i &= \sqrt{\zeta'(1)-\zeta'(q_P)} z_i,\quad i=1,\ldots, p-1,\\
	\xi_0 &= \frac{\beta_ppq_P^{p-2}(1-q_P)}{\sqrt{\zeta'(1)-\zeta'(q_P)}}\sum_{i=1}^{p-1}z_i + z_0 \sqrt{p (1-q_P^{p-1}) - \frac{\beta_p^2 p^2(p-1)q_P^{2p-4}(1-q_P)^2}{\zeta'(1)-\zeta'(q_P)}},
\end{align*}
where the quantity in the last square root is always nonnegative because
\begin{align*}
	 &p (1-q_P^{p-1}) (\zeta'(1)-\zeta'(q_P))- \beta_p^2 p^2(p-1)q_P^{2p-4}(1-q_P)^2\\
	 &\geq p (1-q_P^{p-1})\cdot \beta_p^2p (1-q_P^{p-1})-\beta_p^2 p^2(p-1)q_P^{2p-4}(1-q_P)^2\\
	 &=\beta_p^2p^2\bigl((1-q_P^{p-1})^{2}-(p-1)q_P^{2p-4}(1-q_P)^2\bigr)\\
	 &=\beta_p^2p^2(1-q_P)^2\bigl((1+q_P+\cdots+q_{P}^{p-2})^2-(p-1)q_P^{2p-4}\bigr)\geq 0.
\end{align*}
For $\rho\in \Sigma_{N-1}$, recall from \eqref{eqn:defYnbeta} that $$\dot X_{N,\beta}^\rho(\tau^1),\ldots,\dot X_{N,\beta}^\rho(\tau^{p-1}),\quad\forall \tau^1,\ldots,\tau^{p-1} \in \Sigma_{N-1}$$ 
are defined as
\begin{align}
	\label{eqn:defYn-dot-tau}    	\dot X_{N,\beta}^\rho({\tau}^l)& 
	=\sum_{p\ge 2}\frac{\beta_p \sqrt{p!}}{N^{(p-1)/2}}\sum_{1\le i_1<\cdots < i_{p-1} \le N-1}g_{i_1,\ldots, i_{p-1},N} \bigl( \tau_{i_1}^l\cdots \tau^l_{i_{p-1}} -\bgbs{ \tau_{i_1}^l\cdots \tau^{l}_{i_{p-1}}}_{N-1,\beta'}^{\rho}\bigr).
\end{align}
Recall $Z_{N,p}$ from \eqref{eqn:defnZNp}. We set the centralized $Z_{N,p}(\boldsymbol{\tau})$ as
\begin{align}
	\label{eqn:defZn-dot-tau}
	\dot Z_{N, p}^\rho({\boldsymbol\tau})&=\frac{\sqrt{p!}}{N^{(p-1)/2}}\sum_{1\le i_1<\cdots < i_{p-1} \le N-1}g_{i_1,\ldots, i_{p-1},N} \bigl(\tau_{i_1}^1\cdots \tau_{i_{p-1}}^{p-1} -\bgbs{ \tau_{i_1}^1\cdots \tau^{p-1}_{i_{p-1}}}_{N-1,\beta'}^{\rho}\bigr).
\end{align}
Now we are ready to state the central limit theorem. 
\begin{theorem}[Multivariate Central Limit Theorem]\label{thm:mvclt} Fix $p\ge 2$. Let  $F_{0},\ldots, F_{p-1}$ be  infinitely differentiable functions on $\mathbb R$ such that for $z\sim N(0,1)$, any $M>0$ and integers $m,k\geq 0$,
	\[
	\sup_{0\le \gamma \le M }\max\bigl(\E|F_{0}^{(m)}(\gamma z )|^k, \ldots, \E|F_{p-1}^{(m)}(\gamma z )|^k \bigr) < \infty.
	\]
	Then, for any integer $r\ge 1$ fixed, we have
	\begin{align}
		\begin{split}	\label{eqn:mv-clt}
		\lim_{\epsilon \downarrow 0}
		\limsup_{N\to\infty}\,&
		\E \Big\langle \Big[\bgbs{U\big(\dot Z_{N, p}^\rho(\boldsymbol{\tau}), \dot X_{N,\beta}^\rho( \tau^1),\ldots,  \dot X_{N,\beta}^\rho(\tau^{p-1})
			\big)}^{\rho}_{N-1, \beta'} 
	  - \E U(\boldsymbol \xi)\Big]^{2r}\Big\rangle_{N,\beta}=0,
	  	\end{split}
	\end{align}
	where  $U(x_0, \ldots, x_{p-1}) := F_{0}(x_0)F_{1}(x_1)\cdots F_{p-1}(x_{p-1})$ for $x=(x_0,\ldots, x_{p-1})\in \mathbb{R}^p$.
\end{theorem}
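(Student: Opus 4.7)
The plan is to extend the interpolation scheme used in the proof of Theorem~\ref{lemma4-CLT} from a single scalar cavity field to the $p$-dimensional vector specified in Theorem~\ref{thm:mvclt}. For each replica index $m=1,\ldots,2r$, introduce an independent $(p-1)$-tuple $\boldsymbol\tau^m=(\tau^{m,1},\ldots,\tau^{m,p-1})$ drawn from $G_{N-1,\beta'}^\rho$, the centered Gaussian vector
\[
\boldsymbol S_\rho^m:=\bigl(\dot Z_{N,p}^\rho(\boldsymbol\tau^m),\,\dot X_{N,\beta}^\rho(\tau^{m,1}),\,\ldots,\,\dot X_{N,\beta}^\rho(\tau^{m,p-1})\bigr),
\]
and i.i.d.\ copies $\boldsymbol\xi^m$ of $\boldsymbol\xi$ that are independent of everything else. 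For $t\in[0,1]$, interpolate $\boldsymbol x_\rho^m(t):=\sqrt{t}\,\boldsymbol S_\rho^m+\sqrt{1-t}\,\boldsymbol\xi^m$, set $V(\boldsymbol x):=U(\boldsymbol x)-\E U(\boldsymbol\xi)$, and define
\[
\phi_N(t):=\E \bbbgbs{ \bbbgbs{\prod_{m=1}^{2r} V(\boldsymbol x_\rho^m(t))}_{N-1,\beta'}^{\rho}}_{N,\beta}.
\]
The expectation in \eqref{eqn:mv-clt} is exactly $\phi_N(1)$, and the change-of-measure trick used in Theorem~\ref{lemma4-CLT} yields $\phi_N(1)\le\psi_N(1)\le\int_0^1|\psi_N'(t)|\,dt$, where $\psi_N$ is obtained by replacing the outer $\la\,\cdot\,\ra_{N,\beta}$ by $\la \cosh(X_{N,\beta}(\rho)+h)\,\cdot\,\ra_{N-1,\beta'}$, so it suffices to bound $|\psi_N'(t)|$.

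The second step is to differentiate $\psi_N(t)$ and apply Gaussian integration by parts in the joint Gaussian family $\{g_{\cdot N},\boldsymbol\xi^m\}_m$. In direct analogy with \eqref{eqn:CLT-3terms:eq1}--\eqref{eqn:CLT-3terms:eq3}, this produces three families of contributions, each weighted by a discrepancy between $\E_{g_{\cdot N}}[S_{\rho,a}^m S_{\rho,b}^{m'}]$ and the target covariance $\E[\xi_a\xi_b]\mathbbm{1}_{\{m=m'\}}$, for coordinate indices $a,b\in\{0,1,\ldots,p-1\}$: (i)~self-replica ($m=m'$) terms bringing in the mixed partial $\partial_{x_a}\partial_{x_b}V(\boldsymbol x_\rho^m(t))$; (ii)~cross-replica ($m\ne m'$) terms bringing in $\partial_{x_a}V(\boldsymbol x_\rho^m(t))\,\partial_{x_b}V(\boldsymbol x_\rho^{m'}(t))$; and (iii)~a $t^{-1/2}$-weighted cavity-field term involving $\E_{g_{\cdot N}}[S_{\rho,a}^m X_{N,\beta}(\rho)]$ that arises from differentiating the $\cosh$ factor in $\psi_N$. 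Because $U=F_0 F_1\cdots F_{p-1}$ factors, each $\partial_{x_a}\partial_{x_b}V$ equals a product of one or two derivatives $F_j^{(d)}$ against undifferentiated factors; the assumed moment bounds on the $F_j^{(d)}$, combined with the uniform boundedness of the covariance entries of $\boldsymbol x_\rho^m(t)$, let H\"older's inequality dominate each integrand by a universal constant times the absolute value of the corresponding discrepancy, and $1/\sqrt{t}$ remains integrable over $(0,1]$.

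It then remains to verify that every such discrepancy vanishes in expectation as $N\to\infty$ and $\epsilon\downarrow 0$. Expanding each $\E_{g_{\cdot N}}[S_{\rho,a}^m S_{\rho,b}^{m'}]$ via the telescoping identity that produced \eqref{eqn:CLT-3terms-1} rewrites it as a finite linear combination of the functions $q\mapsto \zeta'(q)$ and $q\mapsto \beta_p p q^{p-1}$ evaluated at overlaps of the form $R(\tau^{m,s},\tau^{m',s'})$, $R(\tau^{m,s},\rho)$, and their intra-pure-state Gibbs averages. Since every $\tau^{m,s}$ comes from $G_{N-1,\beta'}^\rho$, Lemma~\ref{concentration} together with \eqref{eqn:continuity-of-zetaprime} forces each such overlap to concentrate at $q_P$; a direct count then confirms that the $Z$-$Z$ diagonal converges to $p(1-q_P^{p-1})$, each $X$-$X$ diagonal to $\zeta'(1)-\zeta'(q_P)$, each $Z$-$X$ cross-entry to $\beta_p p q_P^{p-2}(1-q_P)$, and every other entry (including all $m\ne m'$ couplings and all $X$-$X$ off-diagonals) to $0$, exactly matching $\E[\boldsymbol\xi\boldsymbol\xi^T]$. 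The main obstacle---and the only conceptually new step beyond Theorem~\ref{lemma4-CLT}---is the off-diagonal $Z$-$X$ computation: exactly one of the $p-1$ replica factors inside $\dot Z_{N,p}^\rho(\boldsymbol\tau^m)$ shares its label with the argument of $\dot X_{N,\beta}^\rho(\tau^{m,l})$, forcing one overlap to be $1$ and the remaining $p-2$ to concentrate at $q_P$, and this combinatorial coincidence is precisely what produces the nontrivial constant $\beta_p p q_P^{p-2}(1-q_P)$.
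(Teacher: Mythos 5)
Your proposal follows essentially the same route as the paper's own proof: the same replica-wise interpolation $\sqrt{t}\,\boldsymbol S_\rho^m+\sqrt{1-t}\,\boldsymbol\xi^m$, the same change-of-measure bound $\phi_N(1)\le\psi_N(1)\le\int_0^1|\psi_N'(t)|\,dt$, Gaussian integration by parts producing exactly the three families of terms (same-replica second derivatives, cross-replica first derivatives, and the $t^{-1/2}$-weighted terms from the $\cosh$ factor), with each weighted by a covariance discrepancy that is then killed by the overlap concentration of Lemma \ref{concentration}. Your identification of the limiting covariance entries, including the $Z$--$X$ cross term $\beta_p p q_P^{p-2}(1-q_P)$ arising from the single shared replica label, matches the paper's case analysis, so the proposal is correct and not materially different from the published argument.
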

\begin{proof}
	The proof utilizes a similar idea as that of Theorem \ref{lemma4-CLT}. The main difference is that here we need to deal with the $2r$-th moment of a multivariate function depending on $(p-1)$ spin replicas. Thus, we will further consider $2r$ independent copies of these $(p-1)$ spins,  $(\tau^1, \tau^2, \ldots, \tau^{p-1})$, and denote them by 
	\begin{align*}
		\boldsymbol \tau^m &= (\tau^{m, 1}, \ldots, \tau^{m, p-1}), \quad m=1,2,\ldots, 2r,
	\end{align*}
where $(\tau^{m,l})_{1\leq m\leq 2r, 1\leq p\leq p-1}$ are independently sampled from $G_{N-1,\beta'}^{\rho}$. For $1\le m\le 2r$, define
	\begin{align*}
		{\boldsymbol S}_\rho^{m}&=(S^{m,0}_\rho,S^{m,1}_\rho,\ldots, S^{m,p-1}_{\rho})=\bigl(\dot Z_{N,p}^\rho(\boldsymbol\tau^{m}), \dot X_{N,\beta}^\rho({\tau}^{m,1}), \ldots, \dot X_{N,\beta}^\rho( {\tau}^{m,p-1})\bigr).
	\end{align*}
Let $\boldsymbol\xi^m=(\xi_0^m,\xi_1^m,\ldots, \xi_{p-1}^m)$ be i.i.d. copies of $\boldsymbol\xi$ and be independent of everything else.
	For  $0\le t\le 1$, consider the interpolation
	\[
	\phi_N(t):=\E \lrgbs{\bbbgbs{\prod_{m=1}^{2r}V\big(\boldsymbol x_\rho^m(t)\big)}_{N-1, \beta'}^{\rho}}_{N,\beta}
	\]
	where
	\begin{align*}
		\boldsymbol x_\rho^m(t) &= \bigl(x_\rho^{m,0}(t),\ldots,x_\rho^{m,p-1}(t)\bigr):=\sqrt{t} {\boldsymbol S}_\rho^{m} + \sqrt{1-t}\,\xi^m
	\end{align*}
and $V:\mathbb R^p \to \mathbb R$ is defined as 
\begin{align*}
		V(x_0,\ldots, x_{p-1}) = U(x_0,\ldots, x_{p-1})  - \E U(\xi_0,\ldots, \xi_{p-1}).
\end{align*}
Note that the left-hand side of \eqref{eqn:mv-clt} equals $\phi_N(1)$ and $\phi_N(0)=0.$
	Following the same argument as \eqref{add:eq---2}, we have $\phi_N(1)\leq \psi_N(1)$, where
	\begin{align*}
		\psi_N(t) :=& \E \lrgbs{\cosh(X_{N,\beta}(\rho)+h)\bbbgbs{\prod_{m=1}^{2r}V\big(\boldsymbol x_\rho^m(t)\big)}_{\!N-1, \beta'}^{\rho}}_{\!\raisemath{4pt}{\!N-1,\beta'}}.
	\end{align*}
    It suffices to show that $\lim_{\epsilon\downarrow 0}\limsup_{N\to\infty}\psi_N(1)=0.$ To justify this limit, observe that $\psi_N(0)=0$ and we thus aim at showing
    \begin{align}\label{add:eq9}
    \lim_{\epsilon\downarrow 0}\limsup_{N\to\infty}\int_0^1|\psi_N'(t)|dt=0.
    \end{align}
    
    First of all, by chain rule, $\psi_N'(t)$ is equal to
	\begin{align*}
	\sum_{m=1}^{2r}\sum_{l=0}^{p-1}\E \lrgbs{\cosh(X_{N,\beta}(\rho)+h)\bbbgbs{\frac{\partial V}{\partial x_l}(\boldsymbol x_\rho^{m}(t))\frac{1}{2}\Big(\frac{{ S}_\rho^{m,l}}{\sqrt{t}}-\frac{\xi_l^m}{\sqrt{1-t}}\Big)\prod_{\substack{m'=1\\m'\ne m}}^{2r}V\big(\boldsymbol x_\rho^{m'}(t)\big)}_{\!N-1, \beta'}^{\rho}}_{\!\raisemath{13pt}{\!N-1, \beta'}}.
\end{align*}
Note that the Gibbs measures $G_{N-1,\beta'}$ and $G_{N-1, \beta'}^{\rho}$ do not depend on $g_{\cdot N }$, and thus, as before, we can compute this derivative by first taking the expectation (inside the Gibbs averages $\gbs{\cdot}_{N-1,\beta'}$ and $\gbs{\cdot}_{N-1,\beta'}^{\rho}$) only with respect to $g_{\cdot N}$ and $(\xi_l^{m})_{0\le l\leq p-1, 1\le m\le 2r}$ and then using the Gaussian integration by parts.  With these, for fixed $m$ and $l$, the relevant terms in the above display become
\begin{align*}
	&\ \ \E_{g_{\cdot N },\boldsymbol \xi}\bigg[\cosh(X_{N,\beta}(\rho)+h)\frac{\partial V}{\partial x_l}(\boldsymbol x_\rho^m(t)) \frac{1}{2}\Big(\frac{{S}_\rho^{m,l}}{\sqrt{t}}-\frac{\xi_l^m}{\sqrt{1-t}}\Big)\prod_{\substack{m'=1\\m'\ne m}}^{2r}V\big(\boldsymbol x_\rho^{m'}(t)\big)\bigg]= \text{(I) + (II) + (III)},
\end{align*}
where
\begin{align*}
{\rm (I)}	&:= \E_{g_{\cdot N },\boldsymbol \xi}\bigg[\sinh(X_{N,\beta}(\rho)+h)\frac{\partial V}{\partial x_l}(\boldsymbol x_\rho^m(t))\prod_{\substack{m'=1\\m'\ne m}}^{2r}V\big(\boldsymbol x_\rho^m(t)\big)\bigg]\cdot\frac{1}{2\sqrt{t}} \E_{g_{\cdot N }} \big[{S}_\rho^{m,l}X_{N,\beta}(\rho)\big],\\
{\rm (II)}&:=\ \sum_{\substack{l'=0}}^{p-1} \E_{g_{\cdot N },\boldsymbol \xi}\bigg[\cosh(X_{N,\beta}(\rho)+h)\frac{\partial^2 V}{\partial x_l\partial x_{l'}}(\boldsymbol x_\rho^m(t))\prod_{\substack{m'=1\\m'\ne m}}^{2r}V\big(\boldsymbol x_\rho^{m'}(t)\big)\bigg]\\
&\quad\qquad\qquad\cdot\frac{1}{2} \E_{g_{\cdot N },\boldsymbol \xi} \Big[\Bigl(\frac{ S_{\rho}^{m,l}}{\sqrt{t}}-\frac{\xi_l^m}{\sqrt{1-t}}\Bigr)x_{\rho}^{m,l'}(t)\Big],
\end{align*}
and
\begin{align*}
{\rm (III)}	&:=\sum_{\substack{m'=1\\m'\ne m}}^{2r}\sum_{\substack{l'=0}}^{p-1} \E_{g_{\cdot N },\boldsymbol \xi}\bigg[\cosh(X_{N,\beta}(\rho)+h)\frac{\partial V}{\partial x_l}(\boldsymbol x_\rho^m(t))\frac{\partial V}{\partial x_{l'}}(\boldsymbol x_\rho^{m'}(t))\prod_{\substack{m''=1\\m''\ne m,m'}}^{2r}V\big(\boldsymbol x_\rho^{m''}(t)\big)\bigg]\\
	&\qquad\qquad\qquad\qquad \quad \cdot  \frac{1}{2} \E_{g_{\cdot N },\boldsymbol \xi} \Big[\Bigl(\frac{ S_\rho^{m,l}}{\sqrt{t}}-\frac{\xi_l^m}{\sqrt{1-t}}\Bigr)x_\rho^{m',l'}(t)\Big].
\end{align*}
Here $\e_{g_{\cdot N},\boldsymbol{\xi}}$ is the expectation with respect to $g_{\cdot N}$ and $(\xi_l^m)_{0\leq l\leq p-1,1\leq m\leq 2r}.$
To handle these terms, we first note that for any $(\tau^{m,l})_{1\le m\le 2r, 1\le l\le p-1}$ sampled from $G_{N-1,\beta'}$, coordinates of ${\boldsymbol S}_\rho^m$ are mean-zero Gaussian random variables with uniformly bounded variance for any $N\ge 1$, and so are the coordinates of $\boldsymbol x_\rho^m(t)$ for any $0\le t\le 1$. Consequently, from the given assumptions on the functions $F_{0},\ldots, F_{p-1}$ and the H\"older inequality, the first expectations in (I), (II), and (III) are bounded by an absolute constant independent of $N$, $t$, and $(\tau^{m,l})_{1\le m\le 2r, 1\le l\le p-1}$. Furthermore, in (II) and (III), using Gaussian integration by parts implies that for all $0<t<1,$ $1\leq m,m'\leq 2r$, and $0\leq l,l'\leq p-1,$
\begin{align*}
	\E_{g_{\cdot N },\boldsymbol \xi} \Big[\Bigl(\frac{S_\rho^{m,l}}{\sqrt{t}}-\frac{\xi_l^m}{\sqrt{1-t}}\Bigr)x_{\rho}^{m',l'}(t)\Big]
	&=\E_{g_{\cdot N }} \big[S_{\rho}^{m,l}S_{\rho}^{m',l'}\big] - \E\big[\xi_l^m\xi_{l'}^{m'}\big].
\end{align*}
From these, we conclude that there exists a positive constant $K_0$ independent of $N$ and $t$ such that
\begin{align}
	\label{eqn:2exp-1}
	|\psi'_N(t)| &\le \frac{K_0}{\sqrt{t}}\sum_{m=1}^{2r}\sum_{l=0}^{p-1}\E\bbgbs{\bgbs{\big|\E_{g_{\cdot N }}\big[ S_{\rho}^{m,l}X_{N,\beta}(\rho)\big]\big|}^{\rho}_{ N-1,\beta'}}_{N-1,\beta'}\\
	\label{eqn:2exp-2}
	&\ \  + K_0\sum_{m,m'=1}^{2r}\sum_{l,l'=0}^{p-1}\E\bbgbs{\bgbs{\big| \E_{g_{\cdot N }} \big[S_{\rho}^{m,l}S_{\rho}^{m',l'}\big] - \E \big[\xi_l^m\xi_{l'}^{m'}\big] \big|}^{\rho}_{ N-1,\beta'}}_{N-1,\beta'}.
\end{align}
Note that $t^{-1/2}$ is integrable on $(0,1].$ It remains to show that the expectations in \eqref{eqn:2exp-1} and \eqref{eqn:2exp-2} vanish in the limit as $N\to \infty$ and then $\epsilon \to 0$, which implies in \eqref{add:eq9}.

\smallskip

{\noindent \bf Estimation of \eqref{eqn:2exp-1}:} We claim that for any $1\leq m\leq 2r$ and $0\leq l\leq p-1,$
 \begin{align}\label{add:eq8}
	\lim_{\epsilon\downarrow 0}	\limsup_{N\to\infty}\E\bbgbs{\bgbs{\big|\E_{g_{\cdot N }}\big[ S_{\rho}^{m,l}X_{N,\beta}(\rho)\big]\big|}^{\rho}_{ N-1,\beta'}}_{N-1,\beta'}=0.
 \end{align}
If $l\ne 0$, then recalling $S_\rho^{m,l}=\dot X_{N,\beta}( \tau^{m,l})$,  the same  argument as that used in the estimation of \eqref{eqn:CLT-3terms:eq3} in the proof of Theorem \ref{lemma4-CLT} yields \eqref{add:eq8}.
As for $l=0$, from $S_\rho^{m,0}=\dot Z_{N,p}^\rho(\boldsymbol\tau),$ we compute
\begin{align*}
	&\E_{g_{\cdot N }}\big[S_\rho^{m,0} X_{N,\beta}(\rho)\big] \\
	&=\frac{\beta_p p}{N^{p-1}}  \sum_{\substack{i_1,\ldots i_{p-1} = 1\\\text{distinct}}}^{N-1}\bigl( \tau_{i_1}^{m,1}\cdots \tau_{i_{p-1}}^{m,p-1} -\bgbs{ \tau_{i_1}^{m,1}\cdots \tau^{m, p-1}_{i_{p-1}}}_{N-1,\beta'}^{\rho}\bigr)\rho_{i_1}\cdots\rho_{i_{p-1}}\\
	&=\beta_pp \Bigl(R(\tau^{m,1},\rho)\cdots R(\tau^{m,p-1},\rho) -\bgbs{R(\tau^{m,1},\rho)\cdots R(\tau^{m,p-1},\rho)}_{N-1,\beta'}^{\rho}\Bigr) + o_N(1).
\end{align*}
Since $\tau^{m,1},\ldots,\tau^{m,p-1}\sim G_{N-1, \beta'}^{\rho}$, we always have
$R(\tau^{m,1}, \rho),\ldots,R(\tau^{m,p-1},\rho) \ge q_P-\epsilon$. This combining with \eqref{add:lem2:eq3} implies that
\[\limsup_{N\to\infty} \E\bbgbs{\bgbs{\big|\E_{g_{\cdot N }}\big[S_\rho^{m,0}X_{N,\beta}(\rho)\big]\big|}^{\rho}_{ N-1,\beta'}}_{N-1,\beta'} \le \beta_p p \bigl((q_P+\epsilon)^{p-1}-(q_P-\epsilon)^{p-1}\bigr) \le 2\beta_p p(p-1)\epsilon.
\]
Sending $\epsilon\downarrow 0$ yields \eqref{add:eq8}.

\smallskip

{\noindent \bf Estimation of \eqref{eqn:2exp-2}:} We claim that for any $1\leq m,m'\leq 2r$ and $0\leq l,l'\leq p-1,$ 
\begin{align}\label{add:eq7}
	\lim_{\epsilon\downarrow 0}\limsup_{N\to\infty}\E\bgbs{\bgbs{\big| \E_{g_{\cdot N }} \big[S_{\rho}^{m,l}S_{\rho}^{m',l'}\big] - \E \big[\xi_l^m\xi_{l'}^{m'}\big] \big|}^{\rho}_{ N-1,\beta'}}_{N-1,\beta'}=0.
\end{align}
First consider $l,l'\neq 0$. Note that, in this case,
\begin{align*}
S_\rho^{m,l}=\dot X_{N,\beta}^\rho( \tau^{m,l})\quad\mbox{and}\quad S_\rho^{m',l'}=\dot X_{N,\beta}^\rho( \tau^{m',l'}).
\end{align*}
In view of the proof of Theorem \ref{lemma4-CLT}, when $m=m'$ and $l=l'$, the control of $\E_{g_{\cdot N }} \big[S_{\rho}^{m,l}S_{\rho}^{m',l'}\big] $ has been implemented in the {\bf Estimation of \eqref{eqn:CLT-3terms:eq1}} ; if $m\neq m'$, or $l\neq l'$, or both,  this expectation can also be controlled by the {\bf Estimation of \eqref{eqn:CLT-3terms:eq2}}. As a conclusion, we readily see that \eqref{add:eq7} holds under these two cases. It remains to consider the scenarios when at least one of the $l, l'$ is equal to 0. Without loss of generality, we assume $l=0$ and divide our discussion into two cases:

\smallskip

{\noindent \bf Case 1: $m\neq m'.$} Note that $\E\big[\xi_0^m\xi_0^{m'}\big]=0$. 
\begin{itemize}
	\item[(1a)] When $l'=0$,
	\begin{align*}
		\big|\E_{g_{\cdot N }} [S_\rho^{m,0} S_\rho^{m',0}]\big|&= \big\lvert\E_{g_{\cdot N }} [\dot Z_{N,p}^\rho ({\boldsymbol \tau^m})\dot Z_{N,p}^\rho({\boldsymbol \tau^{m'}})]\big\rvert\\
		&\le  p \Big|\prod_{\ell=1}^{p-1}R(\tau^{m,\ell},\tau^{m',\ell})-q_P^{p-1}\Big| + p \Big|\prod_{\ell=1}^{p-1}\bgbs{R(\tau^{m,\ell},\tau^{m',\ell})}_{N-1,\beta'}^{\rho}-q_P^{p-1}\Big|\\
		&\ \ \  +  p \Bigg|\sum_{\substack{i_1,\ldots, i_{p-1}=1\\ \text{distinct}}}^{N-1} \tau^{m,1}_{i_1}\cdots\tau^{m,p-1}_{i_{p-1}}\bgbs{\tau^{m',1}_{i_1}\cdots\tau^{m',p-1}_{i_{p-1}}}_{N-1,\beta'}^{\rho}-q_P^{p-1}\Bigg|\\
		&\ \ \ + p \Bigg|\sum_{\substack{i_1,\ldots, i_{p-1}=1\\ \text{distinct}}}^{N-1} \tau^{m',1}_{i_1}\cdots\tau^{m',p-1}_{k_{p-1}}\bgbs{\tau^{m,1}_{i_1}\cdots\tau^{m,p-1}_{i_{p-1}}}_{N-1,\beta'}^{\rho}-q_P^{p-1}\Bigg| + o_N(1).
	\end{align*}
	Taking Gibbs average with respect to $G_{N-1,\beta'}^{\rho}$ and using Jensen's inequality,  we get
	\begin{align*}
		\bgbs{\big|\E_{g_{\cdot N }} [S_\rho^{m,0} S_\rho^{m',0}]\big|}_{N-1,\beta'}^{\rho}\le 4 p \bbbgbs{\bigg|\prod_{\ell=1}^{p-1}R(\tau^{1,\ell},\tau^{2,\ell})-q_P^{p-1}\bigg|}_{N-1,\beta'}^{\rho} + o_N(1).
	\end{align*}
\item[(1b)] If $l'\ne 0$, then 
	\begin{align*}
		\big|\E_{g_{\cdot N }} [S_\rho^{m,0} S_\rho^{m',l'}]\big|&= \big\lvert\E_{g_{\cdot N }} [\dot Z_{N,p}^\rho({\boldsymbol \tau})\dot X_{N,\beta}^\rho({ \tau}^{m',l'})]\big\rvert\\
		&\le \beta_p p \Big|\prod_{\ell=1}^{p-1}R(\tau^{m,\ell},\tau^{m',l'})-q_P^{p-1}\Big| + \beta_p p \Big|\prod_{\ell=1}^{p-1}\bgbs{R(\tau^{m,\ell},\tau^{m',l'})}_{N-1,\beta'}^{\rho}-q_P^{p-1}\Big|\\
		&\ \ + \beta_p p \Bigg|\sum_{\substack{i_1,\ldots, i_{p-1}=1\\ \text{distinct}}}^{N-1} \tau^{m,1}_{i_1}\cdots\tau^{m,p-1}_{i_{p-1}}\bgbs{\tau^{m',l'}_{i_1}\cdots\tau^{m',l'}_{i_{p-1}}}_{N-1,\beta'}^{\rho}-q_P^{p-1}\Bigg|\\
		&\ \ + \beta_p p \Bigg|\sum_{\substack{i_1,\ldots, i_{p-1}=1\\ \text{distinct}}}^{N-1} \tau^{m',l'}_{i_1}\cdots\tau^{m',l'}_{i_{p-1}}\bgbs{\tau^{m,1}_{i_1}\cdots\tau^{m,p-1}_{i_{p-1}}}_{N-1,\beta'}^{\rho}-q_P^{p-1}\Bigg| + o_N(1)
	\end{align*}
	and thus,
	\begin{align*}
		\bgbs{	\big|\E_{g_{\cdot N }} [S_\rho^{m,0} S_\rho^{m',l'}]\big|}_{N-1,\beta'}^{\rho}\le 4 \beta_p p \bbbgbs{\bigg|\prod_{\ell=1}^{p-1}R(\tau^{1,\ell},\tau^{2,l'})-q_P^{p-1}\bigg|}_{N-1,\beta'}^{\rho} + o_N(1).
	\end{align*}
\end{itemize}

{\noindent \bf Case 2: $m=m'$.}

\begin{itemize}
	\item[(2a)] If $l'=0$, then $\E [\xi_0^{m}\xi_0^{m}] = p(1-q_P^{p-1})$, and 
	\begin{align*}
		\bgbs{\big|\E_{g_{\cdot N }} [S_\rho^{m,0}S_\rho^{m,0}]-\E[\xi_0^{m}\xi_0^{m}] \big| }_{N-1,\beta'}^{\rho}&= \lrgbs{\Big|\E_{g_{\cdot N }} \bigl[\dot Z_{N,p}^\rho({\boldsymbol \tau})^2\bigr]- p(1-q_P^{p-1})\Big|}_{N-1,\beta'}^{\rho}\\
		&\le 3p \bbbgbs{\Big|\prod_{\ell=1}^{p-1}R(\tau^{1,\ell},\tau^{2,\ell})-q_P^{p-1}\Big|}_{N-1,\beta'}^{\rho}+o_N(1).
	\end{align*}
\item[(2b)] If $l'\ne 0$, then $\E[\xi_0^{m}\xi_{l'}^{m}] = \beta_p pq_P^{p-2}(1-q_P)$, and 
	\begin{align*}
		\bgbs{\big|\E_{g_{\cdot N }} [S_\rho^{m,0}S_{\rho}^{m,l'}]-\E[\xi_0^{m}\xi_{l'}^{m}]  \big| }_{N-1,\beta'}^{\rho}&= \lrgbs{\Big|\E_{g_{\cdot N }} \bigl[\dot Z_{N,p}^\rho(\boldsymbol \tau)\dot X_{N,\beta}^\rho(\tau^{m,l'})\bigr]- \beta_p pq_P^{p-2}(1-q_P)\Big|}_{N-1,\beta'}^{\rho}\\
		&\le \beta_p p \bbbgbs{\Big|\prod_{\substack{\ell=1\\\ell\ne l'}}^{p-1}R(\tau^{m,\ell},\tau^{m,l'})-q_P^{p-2}\Big| }_{N-1,\beta'}^{\rho}\\
		&\ \ + 3\beta_p p \bbbgbs{\Big|\prod_{\ell=1}^{p-1}R(\tau^{1,\ell},\tau^{2,l'})-q_P^{p-1}\Big|}_{N-1,\beta'}^{\rho}+o_N(1).
	\end{align*}
\end{itemize}	
	From (1a), (1b), (2a), and (2b), we can use the concentration of the overlap in Lemma \ref{concentration} to conclude that \eqref{add:eq7} holds as long as one of the $l,l'$ equals zero. This completes our proof.
\end{proof}

\subsection{Derivation of the Onsager correction term}

Recall $s^\rho$ from \eqref{add:eq12}. We proceed to show that $X_{N,\beta}(s^{\rho})$ is asymptotically equal to the sum of the cavity field $X_{N,\beta}(\gbs{\tau}_{N-1,\beta'}^{\rho})$ and the Onsager term $\zeta''(q_P)(1-q_{P})\gbs{\sigma_N}_{N,\beta}^{\alpha}$ by using the multivariate central limit theorem, Theorem \ref{thm:mvclt}. This is the first place in our derivation that gives rise to  the Onsager correction term. 

\begin{prop} \label{thm:Xns-to-Xntau}We have
	\begin{align*}
		\lim_{\epsilon\downarrow 0}\limsup_{N\to\infty} \E \lrgbs{\Big[X_{N,\beta}(s^{\rho})-X_{N,\beta}(\gbs{\tau}_{N-1,\beta'}^{\rho}) -\zeta''(q_P)(1-q_{P})\gbs{\sigma_N}_{N,\beta}^{\alpha}\Big]^{2}}_{N,\beta} =0.
	\end{align*}
\end{prop}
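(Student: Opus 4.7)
Write $X_{N,\beta}(s^{\rho}) = \sum_{p\geq 2} B_p^\rho$ as in \eqref{eqn:rewriteApBp}, and decompose $X_{N,\beta}(\gbs{\tau}_{N-1,\beta'}^{\rho}) = \sum_{p\geq 2}\beta_p E_p^\rho$, where $E_p^\rho := \gbs{Z_{N,p}(\boldsymbol\tau)}_{N-1,\beta'}^{\rho}$ is the $p$-th Gaussian piece of the cavity field at $\gbs{\tau}_{N-1,\beta'}^{\rho}$ (here one uses $\gbs{\tau^1_{i_1}\cdots\tau^{p-1}_{i_{p-1}}}_{N-1,\beta'}^{\rho}=\gbs{\tau_{i_1}}^\rho\cdots\gbs{\tau_{i_{p-1}}}^\rho$ by replica independence). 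Likewise, $\zeta''(q_P)(1-q_P) = \sum_{p\geq 2}\beta_p^2 p(p-1)q_P^{p-2}(1-q_P)$. The strategy is to truncate all three series at some large $p_0$ and then, for each fixed $2\leq p\leq p_0$, to show
\begin{equation*}
\lim_{\epsilon\downarrow 0}\limsup_{N\to\infty}\E\bgbs{\bigl[(B_p^\rho - \beta_p E_p^\rho) - \beta_p^2 p(p-1)q_P^{p-2}(1-q_P)\gbs{\sigma_N}_{N,\beta}^\alpha\bigr]^2}_{N,\beta} = 0.
\end{equation*}
The truncation of $\sum B_p^\rho$ is Proposition \ref{Coro:tailofXn}; the analogue for $\sum \beta_p E_p^\rho$ follows from $|\gbs{\tau_i}^\rho|\le 1$, the orthogonality (in $g_{\cdot N}$) of terms for distinct $p$, and a change of measure as in Lemma \ref{add:lem1}, giving a tail bound by $\sum_{p>p_0}\beta_p^2 p$; the Onsager tail is immediate since $\zeta''(q_P)<\infty$ and $|\gbs{\sigma_N}^\alpha|\le 1$.

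For fixed $p$, centering $Z_{N,p}(\boldsymbol\tau) = \dot Z_{N,p}^\rho(\boldsymbol\tau) + E_p^\rho$ in the numerator of $B_p^\rho = \beta_p C_p^\rho/D_p^\rho$ yields
\begin{equation*}
B_p^\rho - \beta_p E_p^\rho = \beta_p\,\frac{\bgbs{\dot Z_{N,p}^\rho(\boldsymbol\tau)\prod_{l=1}^{p-1}\cosh(X_{N,\beta}(\tau^l)+h)}_{N-1,\beta'}^\rho}{\bgbs{\prod_{l=1}^{p-1}\cosh(X_{N,\beta}(\tau^l)+h)}_{N-1,\beta'}^\rho}.
\end{equation*}
Setting $\psi := \gbs{X_{N,\beta}(\tau)}_{N-1,\beta'}^\rho + h$ and $\dot X_l := \dot X_{N,\beta}^\rho(\tau^l)$, so that $X_{N,\beta}(\tau^l)+h = \dot X_l + \psi$, the key maneuver is the identity $\cosh(\dot X_l + \psi) = \cosh\dot X_l\cosh\psi + \sinh\dot X_l\sinh\psi$, producing
\begin{equation*}
\prod_{l=1}^{p-1}\cosh(\dot X_l + \psi) = \sum_{S\subseteq\{1,\ldots,p-1\}}(\sinh\psi)^{|S|}(\cosh\psi)^{p-1-|S|}\prod_{l\in S}\sinh(\dot X_l)\prod_{l\notin S}\cosh(\dot X_l).
\end{equation*}
The coefficients $(\sinh\psi)^{|S|}(\cosh\psi)^{p-1-|S|}$ are random (through $\rho$ and the disorder $g_{\cdot N}$) but do not depend on the replicas $\tau^l$, so they factor out of each inner Gibbs average. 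Theorem \ref{thm:mvclt} then applies, with $F_0(x)=x$ (resp.~$F_0\equiv 1$ for the denominator) and $F_l\in\{\cosh,\sinh\}$, to replace each inner Gibbs average by the Gaussian moment $\E[\xi_0\prod_{l\in S}\sinh\xi_l\prod_{l\notin S}\cosh\xi_l]$ (resp.~without $\xi_0$) in $L^{2r}$.

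What remains is a direct Gaussian computation. Via the regression $\xi_0 = c\sum_{l=1}^{p-1}\xi_l + z$ with $c = \beta_p p q_P^{p-2}(1-q_P)/\sigma^2$ and $z$ independent of the $\xi_l$'s (here $\sigma^2 := \zeta'(1)-\zeta'(q_P)$), together with the parity identities $\E[\cosh\xi_l]=e^{\sigma^2/2}$, $\E[\sinh\xi_l]=0$, $\E[\xi_l\cosh\xi_l]=0$, and $\E[\xi_l\sinh\xi_l]=\sigma^2 e^{\sigma^2/2}$, only the singletons $|S|=1$ contribute to the numerator, each with value $c\sigma^2 e^{(p-1)\sigma^2/2}\sinh\psi(\cosh\psi)^{p-2}$; the denominator evaluates to $e^{(p-1)\sigma^2/2}(\cosh\psi)^{p-1}$. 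The ratio then simplifies to $\beta_p^2 p(p-1)q_P^{p-2}(1-q_P)\tanh\psi$. Summing over $2\le p\le p_0$ and removing the truncation yields $\zeta''(q_P)(1-q_P)\tanh(\gbs{X_{N,\beta}(\tau)}_{N-1,\beta'}^\rho + h)$, and Proposition \ref{thm3} (a step within Theorem \ref{thm:cavity}) replaces this $\tanh$ by $\gbs{\sigma_N}_{N,\beta}^\alpha$ in $L^2$, completing the proof.

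The main obstacle is exactly the random nature of $\psi$: it depends both on $\rho$ and on the same cavity disorder $g_{\cdot N}$ appearing in $X_{N,\beta}$, so Theorem \ref{thm:mvclt} cannot be invoked with test functions of the form $x\mapsto\cosh(x+\psi)$. The $\cosh(a+b)$ expansion sidesteps this by isolating all $\psi$-dependence into outer coefficients, leaving only deterministic hyperbolic functions inside the Gibbs averages (which do satisfy the hypotheses of Theorem \ref{thm:mvclt}); the resulting $L^2$ errors are then controlled using that $\cosh\psi,\sinh\psi$ have uniformly bounded moments under $\E\gbs{\cdot}_{N,\beta}$. A secondary technical point, that $D_p^\rho$ can be small, is handled in the standard way by splitting over $\{W_{N-1,\beta'}(A_\ominus^\rho)\ge\delta\}$ (on which $D_p^\rho\ge W_{N-1,\beta'}(\Sigma_{N-1}^\rho)^{p-1}\ge\delta^{p-1}$ since $\cosh\ge 1$) and using Proposition \ref{Coro:Xn-Negligible} and its straightforward analogue for $E_p^\rho$ on the complement.
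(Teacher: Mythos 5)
Your proposal is correct and follows essentially the same route as the paper's proof: the same per-$p$ decomposition with $E_p^{\rho}$ and the per-$p$ Onsager term, the same centering $Z_{N,p}(\boldsymbol\tau)=\dot Z_{N,p}^{\rho}(\boldsymbol\tau)+\gbs{Z_{N,p}(\boldsymbol\tau)}_{N-1,\beta'}^{\rho}$ inside the ratio, the same application of Theorem \ref{thm:mvclt} after factoring all dependence on $\psi=\gbs{X_{N,\beta}(\tau)}_{N-1,\beta'}^{\rho}+h$ out of the inner Gibbs averages, the same Gaussian moment computation yielding $\beta_p^2p(p-1)q_P^{p-2}(1-q_P)\tanh\psi$, and the same final appeal to Proposition \ref{thm3} and to the truncation estimates (the paper's Lemma \ref{prop:Xn2bd-2} plays the role of your $E_p$-tail bound). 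The only differences are cosmetic: the paper expands $\cosh x=\tfrac12\sum_{\varepsilon=\pm1}e^{\varepsilon x}$ over sign vectors rather than using your hyperbolic addition formula over subsets, and your $\delta$-splitting on $\{W_{N-1,\beta'}(A_{\ominus}^{\rho})\ge\delta\}$ is not needed here since the normalized conditional denominator $\gbs{\prod_l\cosh(X_{N,\beta}(\tau^l)+h)}_{N-1,\beta'}^{\rho}$ is already at least $1$.
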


We establish the proof of Proposition \ref{thm:Xns-to-Xntau} in this subsection. Recall that $X_{N,\beta}(s^{\rho}) = \sum_{p\ge 2}B_p^{\rho}$, where $B_p^{\rho}$ is defined in \eqref{eqn:defBp}.
Write 
\begin{align*}
	&X_{N,\beta}(\gbs{\tau}_{N-1,\beta'}^{\rho}) +\zeta''(q_P)(1-q_{P}) \gbs{\sigma_{N}}_{N,\beta}^{\alpha}=\sum_{p\ge 2} \Big(E_p^{\rho} +  \beta_p^2 p(p-1)q_P^{p-2}(1-q_{P})\gbs{\sigma_N}_{N,\beta}^{\alpha}\Big),
\end{align*}
where
\begin{align*}
	E_p^\rho:=\frac{\beta_p\sqrt{p!}}{N^{(p-1)/2}}\sum_{1\le i_1<\cdots<i_{p-1}\le N-1}g_{i_1\ldots i_{p-1}N}\gbs{\tau_{i_1}}_{N-1,\beta'}^{\rho}\cdots\gbs{\tau_{i_{p-1}}}_{N-1,\beta'}^{\rho}.
\end{align*}
Thus, to prove Proposition \ref{thm:Xns-to-Xntau}, we will show that each $B_p^{\rho}$ is approximately equal to $E_p^{\rho} +  \beta_p^2 p(p-1)q_P^{p-2}(1-q_{P})\gbs{\sigma_N}_{N,\beta}^{\alpha}$, for each $p\ge 2$ fixed.  To begin with, we firstly show that the infinite sum in $X_{N,\beta}(\gbs{\tau}_{N-1,\beta'}^{\rho})$ can be approximated by a finite sum of $E_p^\rho$, similar to what we have done for  $X_{N,\beta}(\gbs{\sigma}_{N,\beta}^{\alpha})$ in Proposition \ref{Coro:tailofXn}.

\begin{proof}[\bf Proof of Proposition \ref{thm:Xns-to-Xntau}] Recall the expression $B_p^\rho$, $\dot Z_{N,p}^\rho$, and $\dot X_{N,\beta}^\rho$ from\eqref{eqn:rewriteApBp}, \eqref{eqn:defZn-dot-tau}, and \eqref{eqn:defYn-dot-tau}, respectively.  Noticing that
	\begin{align*}
		\beta_pZ_{N,p}(\boldsymbol{\tau}) &= \beta_p\dot Z_{N,p}^\rho( {\boldsymbol \tau}) + E_{p}^{\rho},
	\end{align*}
	we can write
	\begin{align}
		\notag
		B_{p}^{\rho} &=\beta_p\frac{C_{p}^{\rho} }{D_{p}^{\rho}} = \beta_p\frac{\bgbs{Z_{N,p}(\boldsymbol{\tau})\prod_{l=1}^{p-1} \cosh(X_{N,\beta}(\tau^l)+h)}_{N-1,\beta'}^{\rho}}{\bgbs{\prod_{l=1}^{p-1} \cosh(X_{N,\beta}(\tau^l)+h)}_{N-1,\beta'}^{\rho}}\\
		\label{eqn:BpwithEp-1}
		&=E_{p}^{\rho}+\beta_p \frac{\bgbs{\dot Z_{N,p}^\rho({\boldsymbol \tau})\prod_{l=1}^{p-1} \cosh(X_{N,\beta}(\tau^l)+h)}_{N-1,\beta'}^{\rho}}{\bgbs{\prod_{l=1}^{p-1} \cosh(X_{N,\beta}(\tau^l)+h)}_{N-1,\beta'}^{\rho}}.
	\end{align}
	For the numerator in the second term of \eqref{eqn:BpwithEp-1}, we can use the relations
	\begin{align*}
		X_{N,\beta}(\tau^{l}) &= \dot X_{N,\beta}^\rho(\tau^{l}) +  \bgbs{X_{N,\beta}(\tau^{l})}_{N-1,\beta'}^{\rho}\quad\mbox{and}\quad\cosh x = \frac{1}{2}\sum_{\varepsilon=\pm 1} e^{\varepsilon x}
	\end{align*}
	to rewrite it as
	\begin{align*}
		&\ \ \bbgbs{\dot Z_{N,p}^\rho({\boldsymbol \tau})\prod_{l=1}^{p-1}\cosh\bigl(\dot X_{N,\beta}^\rho( \tau^{l}) +  \bgbs{X_{N,\beta}(\tau^{l})}_{N-1,\beta'}^{\rho}+h\bigr)}_{N-1,\beta'}^{\rho}\\
		&=\frac{1}{2^{p-1}}\sum_{\boldsymbol \varepsilon}\exp\Big(\sum_{l=1}^{p-1}\varepsilon_{l}\big(\bgbs{X_{N,\beta}(\tau^{l})}_{N-1,\beta'}^{\rho}+h\big)\Big)\bbgbs{\dot Z_{N,p}^\rho({\boldsymbol \tau})\prod_{l=1}^{p-1}e^{\varepsilon_{l}\dot X_{N,\beta}^\rho( \tau^{l})}}_{N-1,\beta'}^{\rho},
	\end{align*}
	where $\boldsymbol \varepsilon = (\varepsilon_{1},\ldots, \varepsilon_{p-1})\in\Sigma_{p-1}$. Applying Theorem \ref{thm:mvclt} to the last term with the choices of functions $F_{0}(x)=x$ and $F_{l}(x) = e^{\varepsilon_{l} x}$ for $l=1,2,\ldots, p-1$, we get, for each $\boldsymbol\varepsilon \in \Sigma_{p-1}$,
	\[
	\lim_{\epsilon\downarrow 0}\limsup_{N\to\infty}\E \lrgbs{\!\bigg[\!\bbgbs{\dot Z_{N,p}^\rho({\boldsymbol \tau})\prod_{l=1}^{p-1}e^{\varepsilon_{l}\dot X_{N,\beta}^\rho(\tau^{l})}\!}_{N-1,\beta'}^{\rho} \!-\! \beta_{p}pq_{P}^{p-2}(1-q_{P})e^{\frac{p-1}{2}(\zeta'(1)-\zeta'(q_{P}))}\sum_{l=1}^{p-1}\varepsilon_{l}\bigg]^{4}}_{N,\beta} \!= 0,
	\]
	where the second term comes from
	\begin{align*}
		\E \big(F_0(\xi_0)\cdots F_{p-1}(\xi_{p-1}) \Big) &=\E\Big(\xi_{0}\prod_{l=1}^{p-1}e^{\varepsilon_{l}\xi_{l}}\Big) \\
		&= \sum_{l=1}^{p-1}\varepsilon_{l}\E(\xi_{0}\xi_{l}) \E(e^{\varepsilon_{1}\xi_{1}})\cdots \E(e^{\varepsilon_{p-1}\xi_{p-1}})\\
		&=\beta_{p}pq_{P}^{p-2}(1-q_{P})e^{\frac{p-1}{2}(\zeta'(1)-\zeta'(q_{P}))}\sum_{l=1}^{p-1}\varepsilon_{l}.
	\end{align*}
	Denote
	\begin{align*}
		Q_{N,p}^{\rho}&:=\frac{1}{2^{p-1}}\sum_{\boldsymbol \varepsilon \in \Sigma_{p-1}}\exp\Big(\sum_{l=1}^{p-1}\varepsilon_{l}\big(\bgbs{X_{N,\beta}(\tau^{l})}_{N-1,\beta'}^{\rho}+h\big)\Big)\E\Big(\xi_{0}\prod_{l=1}^{p-1}e^{\varepsilon_{l}\xi_{l}}\Big) \\
		&=\beta_{p}p(p-1)q_{P}^{p-2}(1-q_{P})e^{\frac{p-1}{2}(\zeta'(1)-\zeta'(1))}\\
		&\qquad\qquad\qquad\cdot \sinh\bigl(\bgbs{X_{N,\beta}(\tau)}_{N-1,\beta'}^{\rho}+h\bigr)\cosh^{p-2}\bigl(\bgbs{X_{N,\beta}(\tau)}_{N-1,\beta'}^{\rho}+h\bigr).
	\end{align*}
	It then follows from the Cauchy-Schwarz inequality that 
	\begin{align*}
		&\Bigg\{\E \Big\langle \Big[ \bbgbs{\dot Z_{N,p}^\rho({\boldsymbol \tau})\prod_{l=1}^{p-1} \cosh\bigl(X_{N,\beta}(\tau^l)+h\bigr)}_{N-1,\beta'}^{\rho}-Q_{N,p}^{\rho}\Big]^{2}\Big\rangle_{N,\beta}\Bigg\}^{2}\\
		&\le\frac{1}{2^{p-1}}\sum_{\boldsymbol \varepsilon \in \Sigma_{p-1}} \E \bbgbs{\exp\Big(4\sum_{l=1}^{p-1}\varepsilon_{l}\big(\bgbs{X_{N,\beta}(\tau^{l})}_{N-1,\beta'}^{\rho}+h\big)\Big)}_{N,\beta}\\
		&\cdot \frac{1}{2^{p-1}}\sum_{\boldsymbol \varepsilon \in \Sigma_{p-1}}\E \bbbgbs{\Big[\bbgbs{\dot Z_{N,p}({\boldsymbol \tau})\prod_{l=1}^{p-1}e^{\varepsilon_{l}X_{N,\beta}^\rho( \tau^{l})}}_{N-1,\beta'}^{\rho} - \beta_{p}pq_{P}^{p-2}(1-q_{P})e^{\frac{p-1}{2}(\zeta'(1)-\zeta'(q_{P}))}\sum_{l=1}^{p-1}\varepsilon_{l}\Big]^{4}}_{N,\beta},
	\end{align*}
	where on the right-hand side, the last expectation vanishes as $N\to \infty$ and then $\epsilon \downarrow 0$. The first expectation equals
	\begin{align*}
		& \E \bbbgbs{\prod_{l=1}^{p-1}\cosh^{4}\bigl(\bgbs{X_{N,\beta}(\tau^l)}_{N-1,\beta'}^{\rho}+h\bigr)}_{N,\beta} \\
		\le\, & \E  \bbbgbs{\cosh(X_{N,\beta}(\rho)+h)\prod_{l=1}^{p-1}\cosh^{4}\bigl(\bgbs{X_{N,\beta}(\tau^l)}_{N-1,\beta'}^{\rho}+h\bigr)}_{N-1,\beta'},
	\end{align*}
where again we adapted a change of measure for $G_{N,\beta}$ as in Lemma \ref{add:lem1}.
	Applying H\"{o}lder's inequality to decompose the product inside $\gbs{\cdot}_{N-1, \beta'}$ and taking the expectation with respect to $g_{\cdot N }$ first, we see this term stays bounded for all $N\ge 1$ and small enough $\epsilon>0$; this is because $X_{N,\beta}(\tau)$ has a variance uniformly bounded by $C_\beta$ for all $N,\beta$ and $\tau \in \Sigma_{N-1}$.  We thus conclude that 
	\begin{align}
		\label{eqn:B-E-numerator}
		\lim_{\epsilon\downarrow 0}\limsup_{N\to\infty}\E \Big\langle \Big[ \bbgbs{\dot Z_{N,p}^\rho({\boldsymbol \tau})\prod_{l=1}^{p-1} \cosh\bigl(X_{N,\beta}(\tau^l)+h\bigr)}_{N-1,\beta'}^{\rho}-Q_{N,p}^{\rho}\Big]^{2}\Big\rangle_{N,\beta}=0,
	\end{align}
	which takes care of the numerator in the second term of \eqref{eqn:BpwithEp-1}. 
	For the denominator there, applying Theorem \ref{thm:mvclt} to the last term with $F_{0}(x)=1$ and $F_{l}(x) = e^{\varepsilon_{l} x}$ for $l=1,2,\ldots, p-1$ and mimicking the computation for the numerator, we will get
	\begin{align}
		\label{eqn:B-E-denominator}
		\lim_{\epsilon\downarrow 0}\limsup_{N\to\infty}\E \bbbgbs{ \Big[ \bbgbs{\prod_{l=1}^{p-1} \cosh\bigl(X_{N,\beta}(\tau^l)+h\bigr)}_{N-1,\beta'}^{\rho}-\tilde Q_{N,p}^{\rho}\Big]^{2}}_{N,\beta}=0,
 \end{align}
	where 
	\begin{align*}
		\tilde Q_{N,p}^{\rho} &:= \frac{1}{2^{p-1}}\sum_{\boldsymbol \varepsilon \in \Sigma_{p-1}}\exp\Big\{\sum_{l=1}^{p-1}\varepsilon_{l}\big(\bgbs{X_{N,\beta}(\tau^{l})}_{N-1,\beta'}^{\rho}+h\big)\Big\}\E\Big(\prod_{l=1}^{p-1}e^{\varepsilon_{l}\xi_{l}}\Big) \\
		&=e^{\frac{p-1}{2}(\zeta'(1)-\zeta'(q_{P}))} \cosh^{p-1}\bigl(\bgbs{X_{N,\beta}(\tau)}_{N-1,\beta'}^{\rho}+h\bigr).
	\end{align*}
	Furthermore, by Proposition \ref{thm3}, 
	\begin{align}
		\begin{split}	\label{eqn:Qnp-ratio-approx}
		&\lim_{\epsilon\downarrow 0}\limsup_{N\to\infty}\e\bbbgbs{ \Big|\frac{Q_{N,p}^{\rho}}{\tilde Q_{N,p}^{\rho}} -\beta_{p}p(p-1)q_{P}^{p-2}(1-q_{P}) \gbs{\sigma_{N}}_{N,\beta}^{\alpha}\Bigr|^2}_{N,\beta}\\
		&=\beta_{p}p(p-1)q_{P}^{p-2}(1-q_{P})\\
		&\qquad\qquad\cdot \lim_{\epsilon\downarrow 0}\limsup_{N\to\infty}\e\bbgbs{ \big|	\tanh\bigl(\bgbs{X_{N,\beta}(\tau)}_{N-1,\beta'}^{\rho}+h\bigr) - \gbs{\sigma_{N}}_{N,\beta}^{\alpha}\bigr|^2}_{N,\beta}=0.
		\end{split}
	\end{align}
Therefore,
	\begin{align*}
		&\E\bgbs{\big[B_{p}^{\rho}-E_{p}^{\rho}-\beta_{p}^{2}p(p-1)q_{P}^{p-2}(1-q_{P}) \gbs{\sigma_{N}}_{N,\beta}^{\alpha}\big]^{2}}_{N,\beta}\\
		&=\beta_p^2\E\bbbgbs{\Big[\frac{\bgbs{\dot Z_{N,p}({\boldsymbol \tau})\prod_{l=1}^{p-1} \cosh(X_{N,\beta}(\tau^l)+h)}_{N-1,\beta'}^{\rho}}{\bgbs{\prod_{l=1}^{p-1} \cosh(X_{N,\beta}(\tau^l)+h)}_{N-1,\beta'}^{\rho}}-\beta_{p}p(p-1)q_{P}^{p-2}(1-q_{P}) \gbs{\sigma_{N}}_{N,\beta}^{\alpha}\Big]^{2}}_{N,\beta}\\
		&\le C\beta_p^2 \E\bbbgbs{\Big[\frac{\bgbs{\dot Z_{N,p}({\boldsymbol \tau})\prod_{l=1}^{p-1} \cosh(X_{N,\beta}(\tau^l)+h)}_{N-1,\beta'}^{\rho}}{\bgbs{\prod_{l=1}^{p-1} \cosh(X_{N,\beta}(\tau^l)+h)}_{N-1,\beta'}^{\rho}}-\frac{Q_{N,p}^{\rho}}{\tilde Q_{N,p}^{\rho}}\Big]^{2}}_{N,\beta}\\
		&\qquad  + C\beta_p^2\E\bbbgbs{\Big[\frac{Q_{N,p}^{\rho}}{\tilde Q_{N,p}^{\rho}}-\beta_{p}p(p-1)q_{P}^{p-2}(1-q_{P}) \gbs{\sigma_{N}}_{N,\beta}^{\alpha}\Big]^{2}}_{N,\beta}
	\end{align*}
	The second term vanishes as $N\to\infty$ and $\epsilon \downarrow 0$ due to \eqref{eqn:Qnp-ratio-approx}. From \eqref{eqn:B-E-numerator}, \eqref{eqn:B-E-denominator}, and the fact that $\cosh(X_{N,\beta}(\tau)+h)\geq 1$ and $\tilde{Q}_{N,\beta}^\rho\geq e^{(p-1)(\zeta'(1)-\zeta'(q_{P}))/2},$ the first term also vanishes. 
	As a conclusion, this proves for each $p\ge 2$, 
	\begin{align}
		\label{eqn:BpEp-final}
		\lim_{\epsilon \downarrow 0}\limsup_{N\to\infty}\E\bgbs{\big[B_{p}^{\rho}-E_{p}^{\rho}-\beta_{p}^{2}p(p-1)q_{P}^{p-2}(1-q_{P}) \gbs{\sigma_{N}}_{N,\beta}^{\alpha}\big]^{2}}_{N,\beta}=0.
	\end{align}
	Finally, write
	\begin{align*}
		&\ \ X_{N,\beta}(s^{\rho})-X_{N,\beta}\bigl(\gbs{\tau}_{N-1,\beta'}^{\rho}\bigr) -\zeta''(q_P)(1-q_P)\gbs{\sigma_N}_{N,\beta}^{\alpha}\\
		&=\sum_{2\le p\le p_{0}}\Bigl(B_{p}^{\rho} -  E_{p}^{\rho}-\beta_{p}^{2}p(p-1)q_{P}^{p-2}(1-q_{P}) \gbs{\sigma_{N}}_{N,\beta}^{\alpha}\Big)\\
		&\qquad + \sum_{p> p_{0}} B_{p}^{\rho}  -  \sum_{p> p_{0}} \Big( E_{p}^{\rho} +\beta_{p}^{2}p(p-1)q_{P}^{p-2}(1-q_{P}) \gbs{\sigma_{N}}_{N,\beta}^{\alpha}\Big),
	\end{align*}
	where the first sum vanishes as $N\to \infty$ and $\epsilon \downarrow 0$ for any $p_{0}$ due to \eqref{eqn:BpEp-final}, and the last two sums can be made arbitrarily small by choosing $p_{0}$ sufficiently large, due to Proposition \ref{Coro:tailofXn} and Lemma \ref{prop:Xn2bd-2} below (proof deferred to Appendix \ref{app3}), respectively. This completes our proof.
\end{proof}

\begin{lemma}  \label{prop:Xn2bd-2}  For any $\delta >0$, there exists $p_0>0$ such that for all $N\ge 1$ and any $\epsilon>0,$
		\begin{align*}
			\E \bbbgbs{\bigg[\sum_{p> p_0}E_p^{\rho}\bigg]^{4}}_{N, \beta} &\le \delta.
		\end{align*}
\end{lemma}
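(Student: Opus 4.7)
The plan is to follow the general strategy of Proposition \ref{Coro:tailofXn}, but to exploit a crucial simplification: the coefficients $\gbs{\tau_{i_k}}_{N-1,\beta'}^{\rho}$ appearing in $E_p^\rho$ involve only the disorders of the reduced Hamiltonian $H_{N-1,\beta'}$ (together with $\rho$), and hence are \emph{independent} of the cavity disorders $g_{\cdot N}$. This is in sharp contrast with the quantities $s_j^\rho$ in $B_p^\rho$, which depend on $g_{\cdot N}$ through the factor $\cosh(X_{N,\beta}(\tau)+h)$ and which required the delicate derivative control carried out in Lemma \ref{lemma:Xn2bd}. The simplification allows one to reduce the estimate to an elementary Gaussian moment computation.

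First, I would fix $\rho$ and all disorders other than $g_{\cdot N}$, so that each $E_p^\rho$ becomes a linear combination of the independent standard Gaussians $\{g_{i_1\ldots i_{p-1},N}\}$. Because the index tuples have length $p-1$ which differs across $p$, the family $\{E_p^\rho\}_{p\ge 2}$ involves disjoint Gaussian variables and is therefore mutually independent and centered Gaussian under $\E_{g_{\cdot N}}$. Using $|\gbs{\tau_j}_{N-1,\beta'}^\rho|\le 1$, the conditional variance of the tail satisfies
\[
\sigma_{p_0}^2(\rho) \;:=\; \E_{g_{\cdot N}}\Bigl[\Bigl(\sum_{p>p_0} E_p^\rho\Bigr)^{\!2}\Bigr] \;=\; \sum_{p>p_0}\frac{\beta_p^2\,p!}{N^{p-1}}\!\!\sum_{1\le i_1<\cdots<i_{p-1}\le N-1}\!\!\bigl(\gbs{\tau_{i_1}}_{N-1,\beta'}^\rho\cdots \gbs{\tau_{i_{p-1}}}_{N-1,\beta'}^\rho\bigr)^{\!2} \;\le\; \sum_{p>p_0}\beta_p^2\, p,
\]
uniformly in $\rho$, $N$, and the remaining disorders, so by the standard Gaussian eighth-moment identity, $\E_{g_{\cdot N}}[(\sum_{p>p_0}E_p^\rho)^{8}] \le 105\,(\sum_{p>p_0}\beta_p^2 p)^{4}$. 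Next, by the change-of-measure estimate obtained in the proof of Lemma \ref{add:lem1} (which only needs $\Gamma\ge 0$ and yields the bound prior to taking $\E_{g_{\cdot N}}$),
\[
\E\bbbgbs{\Bigl(\sum_{p>p_0}E_p^\rho\Bigr)^{\!4}}_{\!N,\beta}\;\le\; \E\bbbgbs{\cosh(X_{N,\beta}(\tau)+h)\Bigl(\sum_{p>p_0}E_p^\tau\Bigr)^{\!4}}_{\!N-1,\beta'}.
\]
I would then pull $\E_{g_{\cdot N}}$ inside the outer Gibbs average (by independence of $g_{\cdot N}$ from $G_{N-1,\beta'}$) and apply Cauchy--Schwarz in $\E_{g_{\cdot N}}$. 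Since $X_{N,\beta}(\tau)+h$ is Gaussian with mean $h$ and variance at most $C_\beta$ uniformly in $\tau$, the factor $(\E_{g_{\cdot N}}\cosh^2(X_{N,\beta}(\tau)+h))^{1/2}$ is bounded by a constant depending only on $\beta$ and $h$, and combining this with the Gaussian eighth-moment bound above yields
\[
\E\bbbgbs{\Bigl(\sum_{p>p_0}E_p^\rho\Bigr)^{\!4}}_{\!N,\beta}\;\le\; K(\beta,h)\Bigl(\sum_{p>p_0}\beta_p^2\, p\Bigr)^{\!2}.
\]

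Finally, since $\sum_{p\ge 2}\beta_p^2\, p\le \sum_{p\ge 2}2^p\beta_p^2 = C_\beta<\infty$, the tail sum $\sum_{p>p_0}\beta_p^2\, p$ can be made arbitrarily small by choosing $p_0$ sufficiently large, uniformly in $N$ and $\epsilon$, which proves the claim. I do not anticipate a substantive obstacle: the argument is strictly simpler than that of Proposition \ref{Coro:tailofXn}, precisely because the coefficients $\gbs{\tau_{i_k}}_{N-1,\beta'}^\rho$ in $E_p^\rho$ are measurable with respect to the disorders independent of $g_{\cdot N}$, enabling a direct Gaussian-moment computation in place of the induction on mixed partial derivatives carried out for $s_j^\rho$ in Lemma \ref{lemma:Xn2bd}.
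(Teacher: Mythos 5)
Your proof is correct, but it follows a genuinely different route from the paper's. The paper proves this lemma by staying under $\E\gbs{\cdot}_{N,\beta}$ and repeating the strategy of Lemma \ref{lemma:Xn2bd}: expand the fourth moment, apply Gaussian integration by parts in $g_{\cdot N}$ (here the derivatives hit only $G_{N,\beta}(\alpha)$, controlled by \eqref{add:eq10}), run the three-case combinatorial count to get $\E\gbs{|E_p^\rho|^4}_{N,\beta}\le C(\beta_p^4p^2+\beta_p^6p^5+\beta_p^8p^8)$, and then conclude via summability of the fourth roots as in \eqref{eqn:Xnbound-holder}. You instead dispose of the coupling between the sampling of $\rho$ and $g_{\cdot N}$ up front by the change of measure: the intermediate bound $\E\gbs{\Gamma(\rho)}_{N,\beta}\le\E\gbs{\Gamma(\tau)\cosh(X_{N,\beta}(\tau)+h)}_{N-1,\beta'}$ in the proof of Lemma \ref{add:lem1} is indeed a pointwise Radon--Nikodym computation using only $\Gamma\ge0$ and $\gbs{\cosh(X_{N,\beta}(\tau)+h)}_{N-1,\beta'}\ge1$, and the paper itself invokes it in this generality elsewhere (e.g.\ for $\phi_N(1)\le\psi_N(1)$ in Theorem \ref{lemma4-CLT} and in Lemma \ref{lemma:CpDp-diff}); after that, since $\gbs{\tau_j}_{N-1,\beta'}^{\tau}$ is measurable with respect to the $(N-1)$-system disorder, $\sum_{p>p_0}E_p^\tau$ is, conditionally on everything but $g_{\cdot N}$, a centered Gaussian with variance at most $\sum_{p>p_0}p\beta_p^2$, so Cauchy--Schwarz in $\E_{g_{\cdot N}}$ plus the Gaussian eighth-moment identity gives the clean uniform bound $K(\beta,h)\bigl(\sum_{p>p_0}p\beta_p^2\bigr)^2$, which tends to $0$ as $p_0\to\infty$. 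Your route avoids the integration-by-parts case analysis entirely and produces an explicit tail estimate rather than term-by-term summability, at the (mild) cost of routing through the $\cosh$ change-of-measure factor; the only presentational caveat is that your variance and eighth-moment computation should be stated as conditional on the non-cavity disorder and performed after the change of measure, which is in fact how you use it.
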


\subsection{Proof of Theorem \ref{thm:main}}
From Theorem \ref{thm:cavity}, Proposition \ref{thm:Xns-to-Xntau}, and the fact that $|\tanh(x)-\tanh(x')|^2\leq 2|x-x'|$ for all $x,x'\in \mathbb{R},$ we readily have 
	\begin{align*}
		\lim_{\epsilon\downarrow 0}\limsup_{N\to\infty}\e\bigl\la\big|	\gbs{\sigma_N}_{N, \beta}^{\alpha}-\tanh \big( X_{N,\beta}(s^\rho)+h- \zeta''(q_P)(1-q_{P})\gbs{\sigma_N}_{N,\beta}^{\alpha}\big)\big|^2\bigr\ra_{N,\beta}=0,
	\end{align*}
	where $\epsilon \downarrow 0$ along a sequence such that $q_P-\epsilon$ is always a point of continuity for $\mu_P$.
	Finally, from Proposition \ref{thm:gbs-cavity-approx}, \eqref{eqn:TAP:eq1} holds and we are done.

\newpage

\begin{appendices}

\section{Proofs of  Propositions \ref{Coro:tailofXn} and \ref{Coro:Xn-Negligible}}\label{app1}

The proofs of  Propositions \ref{Coro:tailofXn} and \ref{Coro:Xn-Negligible}
are based on the following lemma:
\begin{lemma}\label{lemma:Xn2bd} 
		There exists a constant $K= K(\beta, h)>0$ such that for all $N\ge 1$ and small $\epsilon>0,$
		\begin{align*}
\sum_{p\ge 2}\big(\E \bgbs{|A_{p}^\alpha|^{4}}_{N,\beta}\big)^{\frac{1}{4}}\leq K\quad\mbox{and}\quad \sum_{p\ge 2}\big(\E \bgbs{|B_{p}^{\rho}|^4}_{N,\beta}\big)^{\frac{1}{4}}\leq K.
\end{align*}
\end{lemma}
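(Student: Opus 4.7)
The plan is to use a \emph{pointwise} Cauchy--Schwarz estimate to decouple the Gaussian disorders $g_{i_1,\ldots,i_{p-1},N}$ from the bounded Gibbs-average coefficients, and then compute a $\chi^2$ moment. The key observation is that $|\gbs{\sigma_i}_{N,\beta}^\alpha|\le 1$ and $|s^\rho_i|\le 1$ hold deterministically---the latter because $|\tau_i\cosh(X_{N,\beta}(\tau)+h)|=\cosh(X_{N,\beta}(\tau)+h)$, so the numerator and denominator defining $s^\rho_i$ in \eqref{add:eq12} satisfy $|\text{num}|\le \text{denom}$. Applying Cauchy--Schwarz to the sum defining $A_p^\alpha$ in \eqref{eqn:defAp} then yields
\[
|A_p^\alpha|^2 \le \frac{\beta_p^2\, p!}{N^{p-1}}\,\binom{N-1}{p-1}\!\!\sum_{1\le i_1<\cdots<i_{p-1}\le N-1}\!\!g_{i_1,\ldots,i_{p-1},N}^2\le \beta_p^2\,p\,\bar g_p^2,
\]
where $\bar g_p^2 := \binom{N-1}{p-1}^{-1}\sum g_{i_1,\ldots,i_{p-1},N}^2$, and the identical bound holds for $|B_p^\rho|^2$ via \eqref{eqn:defBp}. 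This step completely side-steps the a priori worrying fact that $\gbs{\sigma_i}^\alpha$ and $s^\rho_i$ both depend on the very same $g_{\cdot N}$ against which they are being summed.

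Squaring the bound---pointwise in every disorder and spin configuration---gives $|A_p^\alpha|^4\le \beta_p^4 p^2\,\bar g_p^4$, and since $\bar g_p$ depends only on $g_{\cdot N}$ (not on $\alpha$), the outer Gibbs average is trivial and $\E\gbs{|A_p^\alpha|^4}_{N,\beta}\le \beta_p^4 p^2\,\E\bar g_p^4$. With $M:=\binom{N-1}{p-1}$, the random variable $M\bar g_p^2$ is a sum of $M$ i.i.d.\ $\chi^2_1$'s, so $\E\bar g_p^4=1+2/M\le 3$ uniformly in $N\ge 2$ and $p\ge 2$. Therefore $\bigl(\E\gbs{|A_p^\alpha|^4}_{N,\beta}\bigr)^{1/4}\le 3^{1/4}|\beta_p|\sqrt p$, and the identical bound holds for $B_p^\rho$. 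Summability in $p$ is then a single application of Cauchy--Schwarz:
\[
\sum_{p\ge 2}|\beta_p|\sqrt p = \sum_{p\ge 2}\bigl(|\beta_p|\,2^{p/2}\bigr)\bigl(\sqrt p\,2^{-p/2}\bigr) \le \sqrt{C_\beta}\,\Bigl(\sum_{p\ge 2}p\,2^{-p}\Bigr)^{1/2}= \sqrt{\tfrac{3}{2}\,C_\beta}\,,
\]
yielding the desired uniform constant $K=3^{1/4}\sqrt{3C_\beta/2}$.

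The apparent difficulty---that $\gbs{\sigma_i}^\alpha$ and $s^\rho_i$ depend in a complicated way on the same $g_{\cdot N}$ against which they are weighted---dissolves because the Cauchy--Schwarz step is pointwise, so only the trivial bound $|\gbs{\sigma_i}^\alpha|,|s^\rho_i|\le 1$ enters. The only real worry is that this rather coarse estimate might be too lossy to survive the sum over $p$, but the generic-model hypothesis $C_\beta=\sum_p 2^p\beta_p^2<\infty$ forces $|\beta_p|$ to decay like $2^{-p/2}$, which easily absorbs the $\sqrt p$ factor. Notably, neither the parameter $\epsilon$ nor the ultrametric structure enters anywhere in the argument, so the lemma serves as a clean $\epsilon$-uniform black-box input for the moment and truncation estimates in Propositions \ref{Coro:tailofXn} and \ref{Coro:Xn-Negligible}.
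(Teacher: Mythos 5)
Your argument breaks down at the second inequality of your first display, and the error is fatal rather than cosmetic. With $M:=\binom{N-1}{p-1}$ and your normalized $\bar g_p^2=M^{-1}\sum g_{i_1,\ldots,i_{p-1},N}^2$ (the normalization you must use for the $\chi^2$ step $\E\bar g_p^4=1+2/M$), Cauchy--Schwarz gives
\begin{align*}
|A_p^\alpha|^2\;\le\;\frac{\beta_p^2\,p!}{N^{p-1}}\,M\sum_{\mathbf i}g_{\mathbf i,N}^2
\;=\;\frac{\beta_p^2\,p!}{N^{p-1}}\,M^2\,\bar g_p^2\;\le\;\beta_p^2\,p\,M\,\bar g_p^2,
\end{align*}
not $\beta_p^2\,p\,\bar g_p^2$: you have silently dropped a factor $M\sim N^{p-1}/(p-1)!$. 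Restoring it, your method only yields $\bigl(\E\gbs{|A_p^\alpha|^4}_{N,\beta}\bigr)^{1/4}\lesssim |\beta_p|\sqrt{p}\,M^{1/2}$, which diverges with $N$ and proves nothing.

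The deeper issue is that no bound using only $|\gbs{\sigma_i}_{N,\beta}^\alpha|\le 1$ pointwise can work: such a bound must hold for arbitrary coefficients $|c_i|\le 1$, and already for $p=2$ the adversarial choice $c_i=\mathrm{sign}(g_{iN})$ makes $N^{-1/2}\sum_i g_{iN}c_i$ of order $\sqrt N$. The fact that $A_p^\alpha$ is of order one rests entirely on cancellation in the signed Gaussian sum, and the obstacle you claim "dissolves" -- that both the coefficients $\gbs{\sigma_i}_{N,\beta}^\alpha$ and the weight $G_{N,\beta}(\alpha)$ in $\E\gbs{\cdot}_{N,\beta}$ are correlated with $g_{\cdot N}$ -- is precisely what has to be controlled, since the Gibbs measure tilts toward configurations aligned with the disorder. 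The paper's proof expands $\E\gbs{A_p^4}_{N,\beta}$ as in \eqref{eqn:Ap4} and integrates by parts in each $g_{\mathbf i_l,N}$, using the derivative bounds \eqref{add:eq10} and \eqref{add:eq11} (each differentiation costs a factor $\Delta_p(p-1)$) together with a case analysis over coinciding index tuples, which produces $\E\gbs{A_p^4}_{N,\beta}\le C(\beta_p^4p^2+\beta_p^6p^5+\beta_p^8p^8)$; summability in $p$ then follows from $\beta_p^2=o(2^{-p})$. (A minor point: $C_\beta=\sum_p2^p\beta_p^2<\infty$ is the standing assumption on $\beta$, not the genericity hypothesis, which concerns density of the monomials $t^p$ with $\beta_p\neq0$.) Your closing observation that the lemma is uniform in $\epsilon$ is correct, but the proof as written does not establish it.
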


\begin{proof} For notation simplicity, we suppress the superscript $\alpha$ and write $A_p^{\alpha}$ as $A_p$. We handle the series of $A_p$ first.
Note that for each $p$, we only need to consider the case, $N\ge p$, otherwise $A_p=0$ by the definition \eqref{eqn:defAp}. Write
	\begin{align}
		\notag
		\E \bgbs{A_{p}^{4}}_{N,\beta} &= \E \bbbgbs{\Big(\frac{\beta_{p}^{2}p!}{N^{p-1}}\Big)^2 \Big(\sum_{1\le i_1<\cdots<i_{p-1}\le N-1} g_{i_1,\ldots, i_{p-1},N}\gbs{\sigma_{i_1}}_{N,\beta}^{\alpha}\cdots \gbs{\sigma_{i_{p-1}}}_{N,\beta}^\alpha\Big)^{4}}_{N,\beta}\\
		\label{eqn:Ap4}
		&=\Big(\frac{\beta_{p}^{2}p!}{N^{p-1}}\Big)^2\sum_{\alpha \in \Sigma_{N}}\sum_{\mathbf i_1,\ldots, \mathbf i_{4}}\E \Big[ G_{N,\beta}(\alpha)\prod_{l=1}^{4}g_{\mathbf i_l,N} \prod_{k=1}^{p-1}\gbs{\sigma_{i_{l,k}}}_{N,\beta}^{\alpha}\Big],
	\end{align}
	where  the second sum is over all $\mathbf i_l = (i_{l,1},\cdots, i_{l, p-1})$, for $l=1,2,3,4$ that are $(p-1)$-tuples with strictly increasing coordinates from $\{1,2, \ldots, N-1\}^{p-1}$. We note that there are $\binom{N-1}{p-1}$ choices for each $\mathbf i_l$. Write $g_{i_{l,1},\ldots, i_{l, p-1}, N}$ as $g_{\mathbf i_l, N}$ and let $\Delta_p:=\beta_{p}\sqrt{p!}{N^{-(p-1)/2}}.$ We have
	\begin{align*}
		\Delta_p^2 \binom{N-1}{p-1}&=\frac{\beta_p^2p!}{N^{p-1}}\,\frac{(N-1)(N-2)\cdots (N-(p-1))}{(p-1)!}\\
		&=\beta_p^2p\frac{N-1}{N}\cdot \frac{N-2}{N}\cdots \frac{N-(p-1)}{N}\leq \beta_p^2p.
	\end{align*}
	Also, there exists  a constant $C>0$ independent of $p,N,\mathbf i$ such that for any $0\leq k_1,k_2,k_3,k_4\leq 4$, if we let $d=k_1+k_2+k_3+k_4,$ then
	\begin{align}\label{add:eq10}
		\Bigl|\partial^{k_1}_{g_{\mathbf i_1,N}}\partial^{k_2}_{g_{\mathbf i_2,N}}\partial^{k_3}_{g_{\mathbf i_3,N}}\partial^{k_4}_{g_{\mathbf i_4,N}} G_{N,\beta}(\alpha)\Bigr|&\leq  C\Delta_p^{d}G_{N,\beta}(\alpha)\leq C\Delta_p^d(p-1)^dG_{N,\beta}(\alpha)
	\end{align}
and
\begin{align}\label{add:eq11}
		\Bigl|\partial^{k_1}_{g_{\mathbf i_1,N}}\partial^{k_2}_{g_{\mathbf i_2,N}}\partial^{k_3}_{g_{\mathbf i_3,N}}\partial^{k_4}_{g_{\mathbf i_4,N}} \prod_{l=1}^4\prod_{k=1}^{p-1}\gbs{\sigma_{i_{l,k}}}_{N,\beta}^{\alpha}\Bigr|&\leq C\Delta_p^{d}(p-1)^d.
	\end{align} 
These can be established by an induction argument on $d.$
Now we divide the collection of $(\mathbf i_1,\mathbf i_2,\mathbf i_3,\mathbf i_4)$ into three cases and compute, respectively, an upper bound for the summand in \eqref{eqn:Ap4} under each case. In the following discussion, $C_1,C_1',C_2,C_2',\ldots$ are absolute constants independent of $N$ and $p.$
	\begin{itemize}
		\item {\bf Case I: all 4 tuples are distinct.} Applying Gaussian integration by part and the chain rule, we get 
		\begin{align*}
			&\ \ \ \Big|\E  G_{N,\beta}(\alpha)g_{\mathbf i_1,N}g_{\mathbf i_2,N}g_{\mathbf i_3,N}g_{\mathbf i_4,N} \prod_{l=1}^{4}\prod_{k=1}^{p-1}\gbs{\sigma_{i_{l,k}}}_{N,\beta}^{\alpha}\Big|
			\le C_1\Delta_p^4(p-1)^4\E G_{N,\beta}(\alpha) .
		\end{align*}
	Since the number of choices for $(\mathbf i_1,\mathbf i_2, \mathbf i_3, \mathbf i_4)$ in Case I are no more than $\binom{N-1}{p-1}^4$, the summation in \eqref{eqn:Ap4} for Case I is bounded by
	\begin{align*}
		&\Delta_p^4\sum_{\alpha \in \Sigma_{N}}\sum_{{\tiny \rm Case\,\,I}} \Big|\E  G_{N,\beta}(\alpha)g_{\mathbf i_1,N}g_{\mathbf i_2,N}g_{\mathbf i_3,N}g_{\mathbf i_4,N} \prod_{l=1}^{4}\prod_{k=1}^{p-1}\gbs{\sigma_{i_{l,k}}}_{N,\beta}^{\alpha}\Big|\\
		&\leq \Delta_p^4\cdot \binom{N-1}{p-1}^4\cdot C_1\Delta_p^4(p-1)^4\leq C_1\beta_p^8p^8.
	\end{align*}
		\item {\bf Case II: there are three distinct tuples in $(\mathbf i_1, \mathbf i_2, \mathbf i_3, \mathbf i_4)$.} Without loss of generality, suppose $\mathbf i_1 = \mathbf i_2$ and they are both different from distinct $\mathbf i_3,\mathbf i_4$. In this case, again using Gaussian integration by part twice and the chain rule, each summand in \eqref{eqn:Ap4} is bounded in absolute value by
		\[
		\Bigl|\E G_{N,\beta}(\alpha)g^2_{\mathbf i_1,N}g_{\mathbf i_3,N}g_{\mathbf i_4,N}\prod_{l=1}^{4} \prod_{k=1}^{p-1}\gbs{\sigma_{i_{l,k}}}_{N,\beta}^{\alpha}\Bigr|\leq C_2 \Delta_p^2(p-1)^2\E g_{\mathbf i_1,N}^2 G_{N,\beta}(\alpha).
		\]
		It follows that
		\begin{align*}
			&\Delta_p^4\sum_{\alpha \in \Sigma_{N}}\sum_{{\tiny \rm Case\,\,II}} \Big|\E  G_{N,\beta}(\alpha)g_{\mathbf i_1,N}g_{\mathbf i_2,N}g_{\mathbf i_3,N}g_{\mathbf i_4,N} \prod_{l=1}^{4}\prod_{k=1}^{p-1}\gbs{\sigma_{i_{l,k}}}_{N,\beta}^{\alpha}\Big|\\
			&\leq C_2'\Delta_p^4\cdot \binom{N-1}{p-1}^3\cdot \Delta_p^2(p-1)^2 \cdot \E g_{\mathbf i_1,N}^2\\
			&\leq C_2'\Delta_p^6 \binom{N-1}{ p-1}^3(p-1)^2\leq C_2'\beta_p^6p^5
		\end{align*}
		\item{\bf Case III: there are no more than two distinct tuples.} In this case, we have three possibilities, each bounded in absolute value respectively as follows:
		\begin{align*}
		\Bigl|\E G_{N,\beta}(\alpha)g^2_{\mathbf i_1,N}g^2_{\mathbf i_2,N}\prod_{l=1}^{4} \prod_{k=1}^{p-1}\gbs{\sigma_{i_{l,k}}}_{N,\beta}^{\alpha}\Bigr| &\le \E G_{N,\beta}(\alpha)g^2_{\mathbf i_1,N}g^2_{\mathbf i_2,N} ,\\
	\Bigl|\E G_{N,\beta}(\alpha)g^1_{\mathbf i_1,N}g^3_{\mathbf i_2,N}\prod_{l=1}^{4} \prod_{k=1}^{p-1}\gbs{\sigma_{i_{l,k}}}_{N,\beta}^{\alpha}\Bigr| &\le \E G_{N,\beta}(\alpha) \left| g^1_{\mathbf i_1,N}g^3_{\mathbf i_2,N}\right|,\\
	\Bigl|\E G_{N,\beta}(\alpha)g^4_{\mathbf i_1,N}\prod_{l=1}^{4} \prod_{k=1}^{p-1}\gbs{\sigma_{i_{l,k}}}_{N,\beta}^{\alpha}\Bigr|&\le \E G_{N,\beta}(\alpha)g^4_{\mathbf i_1,N}.
		\end{align*}
	Consequently,
	\begin{align*}
		&\Delta_p^4\sum_{\alpha \in \Sigma_{N}}\sum_{{\tiny \rm Case\,\,III}} \Big|\E  G_{N,\beta}(\alpha)g_{\mathbf i_1,N}g_{\mathbf i_2,N}g_{\mathbf i_3,N}g_{\mathbf i_4,N} \prod_{l=1}^{4}\prod_{k=1}^{p-1}\gbs{\sigma_{i_{l,k}}}_{N,\beta}^{\alpha}\Big|\\
		&\leq C_3\Delta_p^4\cdot \binom{N-1}{p-1}^2\cdot \left [\E g^2_{\mathbf i_1,N}g^2_{\mathbf i_2,N} + \E  \left| g^1_{\mathbf i_1,N}g^3_{\mathbf i_2,N}\right| + \E g^4_{\mathbf i_1,N}\right] \\
			&\leq C_3'\Delta_p^4\cdot\binom{N-1}{p-1}^2 \le C_3'\beta_p^4p^2.
	\end{align*}
	\end{itemize}
	Combining all three cases, we have
	\begin{align*}
		\E \bgbs{A_{p}^{4}}_{N,\beta} 
		&\le  C_4\bigl(\beta_{p}^{4}p^2 + \beta_{p}^{6}p^{5}+\beta_p^{8}p^8\bigr).
	\end{align*}
	Since $\sum_{p\ge 2} 2^{p}\beta_{p}^{2} < \infty$, we have $\beta_{p}^{2}= o(2^{-p})$ as $p\to \infty$. Choosing $p_0$ large enough such that $\beta_{p} \le 2^{-p/2}$ and $p^2 < 2^{p/4}$ for all $p>p_0$, it follows that
	\begin{align*}
		\sum_{p>p_{0}}\bigl(\E \bgbs{A_{p}^{4}}_{N,\beta}\bigr)^{1/4}&\le C_4 \sum_{p> p_{0}}\bigl(\beta_{p}^{4}p^2 + \beta_{p}^{6}p^{5}+\beta_p^{8}p^8\bigr)^{1/4}\le 3C_4 \sum_{p> p_{0}}\beta_{p}p^{2}\\
	&\le 3C_4 \sum_{p>p_0} 2^{-p/2}p^2 \le 3C_4 \sum_{p>p_0} 2^{-p/4} <\infty.
	\end{align*}
	For the summability for the series of $B_p^\rho$, the proof is essentially the same; the only change is that in \eqref{eqn:Ap4}, $\gbs{\sigma_{j}}_{N,\beta}^{\alpha}$ will be replaced by $s_{j}^\rho$. 
Notice that 
	$
	|s_j^\rho| \le 1$
	and any partial derivatives of $s_{j}^\rho$ of degree $d\leq 4$ with respect to the variables $(g_{\mathbf i,N})_{\mathbf i}$ are bounded by $\Delta_p^d$ up to an absolute constant independent of $p,N$ and $\mathbf i$. For example, 
	\begin{align*}
		\bigg|\frac{\partial s_{j}^{\rho}}{\partial g_{\mathbf i, N}} \bigg|&=\bigg| \Delta_p \frac{\bgbs{\tau_j\tau_{i_1}\cdots\tau_{i_{p-1}}\tau_N\sinh(X_{N,\beta}(\tau)+h)}_{N-1,\beta'}^{\rho}}{\bgbs{\cosh(X_{N,\beta}(\tau)+h)}_{N-1,\beta'}^{\rho}}\Big.\\
		&\qquad \Big.- \Delta_p \frac{s_j^{\rho}\bgbs{\tau_{i_1}\cdots\tau_{i_{p-1}}\tau_N\sinh(X_{N,\beta}(\tau)+h)}_{N-1,\beta'}^{\rho}}{\bgbs{\cosh(X_{N,\beta}(\tau)+h)}_{N-1,\beta'}^{\rho}}\bigg|\le 2\Delta_p.
	\end{align*}
    More general partial derivatives can be controlled by an induction argument on the number of differentiations.
    This  implies that \eqref{add:eq11} with $\gbs{\sigma_{j}}_{N,\beta}^{\alpha}$ replaced by $s_j^\rho$ is also valid.
	We omit the rest of the details. 
\end{proof}

\begin{proof}[\bf Proof of Proposition \ref{Coro:tailofXn}]
	Similar to \eqref{eqn:Xnbound-holder}, we have
	\[
	\E \bbbgbs{\Big[\sum_{p>p_0}A_p^\alpha\Big]^{4}}_{N, \beta} \le \Big(\sum_{p>p_0}\bigl(\E \bgbs{A_{p}^{4}}_{N,\beta}\bigr)^{\frac{1}{4}}\Big)^{4}.
	\]
    Since $\bigl(\E \bgbs{A_{p}^{4}}_{N,\beta}\bigr)^{1/4}$ is summable, as proved in Lemma \ref{lemma:Xn2bd}, the right hand side can be made arbitrarily small by choosing $p_0$ sufficiently large.  The other assertion can be treated similarly.
\end{proof}

\begin{proof}[\bf Proof of Proposition \ref{Coro:Xn-Negligible}] 
	Note that for any $p_1, p_2,p_3, p_{4}\ge 2$, using H\"older's inequality yields
	$$\E \bgbs{A_{p_{1}}^\alpha A_{p_2}^\alpha A_{p_3}^\alpha A_{p_{4}}^\alpha}_{N,\beta} \le \Big(\E \bgbs{|A_{p_1}^\alpha|^{4}}_{N,\beta}\E \bgbs{|A_{p_2}^\alpha|^{4}}_{N,\beta}\E \bgbs{|A_{p_3}^\alpha|^{4}}_{N,\beta}\E\bgbs{|A_{p_{4}}^\alpha|^{4}}_{N,\beta}\Big)^{\frac{1}{4}},$$ 
	which implies that
	\begin{align}
		\label{eqn:Xnbound-holder}
		\E \bgbs{\big[X_{N,\beta}\bigl(\gbs{\sigma}_{N,\beta}^{\alpha}\bigr)\big]^{4}}_{N, \beta} = \E \bbbgbs{\Big(\sum_{p\ge 2}A_{p}^\alpha\Big)^{4}}_{N,\beta}\le \Big(\sum_{p\ge 2}\big(\E \bgbs{|A_{p}^\alpha|^{4}}_{N,\beta}\big)^{\frac{1}{4}}\Big)^{4}.
	\end{align}
	By the Cauchy-Schwarz inequality and Lemma \ref{lemma:Xn2bd}, 
	\begin{align*}
		\E \bgbs{\big[X_{N,\beta}(\gbs{\sigma}_{N,\beta}^{\alpha})\mathbbm 1_{\{G_{N-1,\beta'}(A_{\ominus}^{\rho})<\delta\}}\big]^2}_{N, \beta}
		&\le \Big(\E \bgbs{X^4_{N,\beta}(\gbs{\sigma}_{N,\beta}^{\alpha})}_{N,\beta}\E\bgbs{\mathbbm 1_{\{G_{N-1,\beta'}(A_{\ominus}^{\rho})<\delta\}}}_{N, \beta} \Big)^{1/2}\\
		&\le \sqrt{K}\Big(\E\bgbs{\mathbbm 1_{\{G_{N-1,\beta'}(A_{\ominus}^{\rho})<\delta\}}}_{N, \beta} \Big)^{1/2}.
	\end{align*}
	Thus, \eqref{eqn:approxXn-sigma-smallW} follows from \eqref{add:lem2:eq2}. The proof of \eqref{eqn:approxXn-s-smallW} is exactly the same.
\end{proof}

\section{Proof of Lemma \ref{lemma:CpDp-diff}}\label{app2}

\begin{proof}[\bf Proof of Lemma \ref{lemma:CpDp-diff}] Fist of all, for any $\tau \in \Sigma_{N-1}, \boldsymbol \tau = (\tau^1,\ldots,\tau^{p-1}) \in \Sigma_{N-1}^{p-1}$, $X_{N,\beta}$ and $Z_{N,p}$ are centered Gaussian random variables with variances bounded by $C_\beta$ and $p$ respectively, which result in 
	\begin{align*}
		\e_{g_{\cdot N}}\bigl\la |Z_{N,p}(\boldsymbol{\tau})|^{k}\bigr\ra_{N-1,\beta'}&\leq p^{k/2}(k-1)!!,\\
		\e_{g_{\cdot N}}\bigl\la \cosh^{k}(X_{N,\beta}(\tau)+h)\bigr\ra_{N-1,\beta'}&\leq e^{C_\beta k^2/2}\cosh^k(h),\\
		\e_{g_{\cdot N}}\cosh^k(X_{N,\beta}(\tau)+h)&\leq  e^{C_\beta k^2/2}\cosh^k(h).
	\end{align*}
Using the nested structure \eqref{eqn:nestedsets} and the H\"older inequality with $p$ conjugate exponents $2r(p-1),2r(p-1),\ldots,2r(p-1)$, and $2r/(2r-1)$, we have
	\begin{align*}
		|D_p^\alpha-D_p^\rho|^{2r}&\leq \bbbgbs{\prod_{l=1}^{p-1}\cosh (X_{N,\beta}(\tau^l)+h)\bigg(\prod_{l=1}^{p-1}\mathbbm 1_{A_{\oplus}^{\rho}}(\tau^l) - \prod_{l=1}^{p-1}\mathbbm 1_{A_{\ominus}^{\rho}}(\tau^l)\bigg)}_{N-1,\beta'}^{2r}\\
  &\leq \bgbs{\cosh^{2r(p-1)} (X_{N,\beta}(\tau)+h)}_{N-1,\beta'}\bbbgbs{\bigg(\prod_{l=1}^{p-1}\mathbbm 1_{A_{\oplus}^{\rho}}(\tau^l) - \prod_{l=1}^{p-1}\mathbbm 1_{A_{\ominus}^{\rho}}(\tau^l)\bigg)^{\frac{2r}{2r-1}}}_{N-1,\beta'}^{2r-1}\\
   &\leq \bgbs{\cosh^{2r(p-1)} (X_{N,\beta}(\tau)+h)}_{N-1,\beta'}\bbbgbs{\prod_{l=1}^{p-1}\mathbbm 1_{A_{\oplus}^{\rho}}(\tau^l) - \prod_{l=1}^{p-1}\mathbbm 1_{A_{\ominus}^{\rho}}(\tau^l)}_{N-1,\beta'},
	\end{align*}
 where the last inequality holds since $\prod_{l=1}^{p-1}\mathbbm 1_{A_{\oplus}^{\rho}}(\tau^l) - \prod_{l=1}^{p-1}\mathbbm 1_{A_{\ominus}^{\rho}}(\tau^l)\in \{0,1\}$.
 Since
 \begin{align*}
     \prod_{l=1}^{p-1}\mathbbm 1_{A_{\oplus}^{\rho}}(\tau^l) - \prod_{l=1}^{p-1}\mathbbm 1_{A_{\ominus}^{\rho}}(\tau^l)&\leq \sum_{l=1}^{p-1}\mathbbm{1}_{A_\oplus^\rho\setminus A_\ominus^\rho}(\tau^l),
 \end{align*}
 we have
 \begin{align*}
    |D_p^\alpha-D_p^\rho|^{2r} &\leq (p-1) \bgbs{\cosh^{2r(p-1)} (X_{N,\beta}(\tau)+h)}_{N-1,\beta'}\bgbs{\mathbbm 1_{A_{\oplus}^{\rho}\setminus A_{\ominus}^\rho}(\tau)}_{N-1,\beta'}.
 \end{align*}
From this, by a change of measure for $\alpha=(\rho, \alpha_N)\sim G_{N,\beta}$ as in Lemma \ref{add:lem1} and the Cauchy-Schwarz inequality, we obtain the second assertion,
\begin{align*}
	\E \bgbs{(D_p^\alpha-D_p^{\rho})^{2r}}_{N,\beta}&\leq \e\Bigl[\sum_{\rho}G_{N-1,\beta'}(\rho)\bgbs{\mathbbm 1_{A_{\oplus}^{\rho}\setminus A_{\ominus}^\rho}(\tau)}_{N-1,\beta'}\\
	&\qquad\cdot\e_{g_{\cdot N}}\bigl[\cosh(X_{N,\beta}(\rho)+h)\bgbs{\cosh^{2r(p-1)} (X_{N,\beta}(\tau)+h)}_{N-1,\beta'}\bigr]\Bigr]\\
	&\leq \eta_N p e^{(4r^2 p^2 +1)C_\beta}\cosh^{2rp}(h).
\end{align*}
For the first assertion, we similarly have
	\begin{align*}
		|C_p^\alpha-C_p^{\rho}|^{2r}
		&\le \lrgbs{Z_{N,p}^2(\boldsymbol{\tau})}^r_{N-1,\beta'}\bbbgbs{\prod_{l=1}^{p-1}\cosh^2(X_{N,\beta}(\tau^l)+h)\bigg(\prod_{l=1}^{p-1}\mathbbm 1_{A_{\oplus}^{\rho}}(\tau^l) - \prod_{l=1}^{p-1}\mathbbm 1_{A_{\ominus}^{\rho}}(\tau^l)\bigg)^2}_{N-1,\beta'}^{r}\\
		&\le (p-1)\bgbs{Z_{N,p}^{2r}(\boldsymbol{\tau})}^r_{N-1,\beta'}\bgbs{\cosh^{2r(p-1)}(X_{N,\beta}(\tau)+h)}_{N-1,\beta'}\bgbs{\mathbbm 1_{A_{\oplus}^{\rho}\setminus A_\ominus^\rho}(\tau) }_{N-1,\beta'},
	\end{align*}
 where the first inequality used the Cauchy-Schwarz inequality and the second inequality was obtained by an analogous argument for $|D_p^\alpha-D_p^\rho|^{2r}$.
Via a change of measure for $G_{N,\beta}$ as above, we can then apply the H\"older inequality in the expectation $\e_{g_{\cdot N}}$ with thee conjugates exponents $3,3,3$ to get the desired bound,
\begin{align}
	\begin{split}\label{add:eq6}
	&\e\bgbs{|C_p^\alpha-C_p^{\rho}|^{2r}}_{N,\beta}\\
	&\leq \e\Bigl[\sum_{\rho}G_{N-1,\beta'}(\rho)\bgbs{\mathbbm 1_{A_{\oplus}^{\rho}\setminus A_\ominus^\rho}(\tau^l) }_{N-1,\beta'}\\
	&\qquad\cdot\e_{g_{\cdot N}}\bigl[\cosh(X_{N,\beta}(\rho)+h) \bgbs{Z^{2r}_{N,p}(\boldsymbol{\tau})}_{N-1,\beta'}\bgbs{\cosh^{2r(p-1)}(X_{N,\beta}(\tau)+h)}_{N-1,\beta'}\bigr]\Bigr]\\
	&\leq  \eta_N(p-1)p^{r}\bigl[(6r-1)!!\bigr]^{1/3}e^{(3/2+6r^2 p^2)C_\beta}\cosh^{2rp}(h).
	\end{split}
\end{align}
\end{proof}

\section{\label{app3}Proof of Lemma \ref{prop:Xn2bd-2}}
\begin{proof}[\bf Proof of Lemma \ref{prop:Xn2bd-2}]
	The proof is essentially the same as that for Proposition \ref{Coro:tailofXn}. First of all, we claim that there exists a constant $K= K(\beta, h)>0$ such that for all $N\ge 1$ and any small $\epsilon>0$,
		\begin{align}\label{add:eq13}
	    \sum_{p=2}^\infty \bigl(\e \la |E_p^\rho|^4\ra_{N,\beta}\bigr)^{1/4}&\leq K.
	\end{align}
This part of the argument is analogous to the proof of Lemma \ref{lemma:Xn2bd}, but in a slightly simpler manner.
We begin by rewriting
	\begin{align*}
		\E \bgbs{|E^{\rho}_{p}|^{4}}_{N,\beta} &= \E \bbbgbs{\Big(\frac{\beta_{p}^{2}p!}{N^{p-1}}\Big)^2 \bigg(\sum_{1\le i_1<\cdots<i_{p-1}\le N-1}^{N-1} g_{i_1,\ldots, i_{p-1},N}\gbs{\sigma_{i_1}}_{N,\beta'}^{\rho}\cdots \gbs{\sigma_{i_{p-1}}}_{N-1,\beta'}^{\rho}\bigg)^{4}}_{N,\beta}\\
		&=\Big(\frac{\beta_{p}^{2}p!}{N^{p-1}}\Big)^2\sum_{\alpha \in \Sigma_{N}}\sum_{\mathbf i_1,\ldots, \mathbf i_{4}}\E \Big[ G_{N,\beta}(\alpha)\prod_{l=1}^{4}g_{\mathbf i_l,N} \prod_{k=1}^{p-1}\gbs{\sigma_{i_{l,k}}}_{N-1,\beta'}^{\rho}\Big].
	\end{align*}
	When applying Gaussian integration by parts to control the last equation, we only need to differentiate $G_{N,\beta}(\alpha)$ with respect to $g_{\mathbf i_l, N}$ and the bounds of the partial derivatives of $G_{N,\beta}(\alpha)$ given by \eqref{add:eq10}. An identical argument as in the proof of Lemma \ref{Coro:tailofXn} implies our claim \eqref{add:eq13}, the summability of $\bigl(\e \la |E_p^\rho|^4\ra_{N,\beta}\bigr)^{1/4}$. With this claim,  our assertion follows immediately since, similar to \eqref{eqn:Xnbound-holder}, $$
	\E\bbbgbs{\bigg[\sum_{p> p_0}E_p^\rho\bigg]^4}_{N,\beta}\leq \bigg(\sum_{p> p_0}\bigl(\e \la |E_p^\rho|^4\ra_{N,\beta}\bigr)^{1/4}\bigg)^4,
	$$
	and the right hand side can be made arbitrarily small by choosing $p_0$ sufficiently large. 
\end{proof}
\end{appendices}

\end{document}